\newtheorem{thm}{Theorem}[section]
\newtheorem{cor}[thm]{Corollary}
\newtheorem{prop}[thm]{Proposition}
\newtheorem{lem}[thm]{Lemma}
{ \theoremstyle{definition}
\newtheorem{Remark}[thm]{Remark}
\newtheorem{ex}[thm]{Example}
\newtheorem{Definition}[thm]{Definition}
}
\newcommand{\actionG}{\underline{\delta}}
\newcommand{\brep}{u}
\newcommand{\C}{\mathbbm{C}}
\newcommand{\CAlg}{\mathcal{O}}
\newcommand{\CatInd}[1][\brep]{\mathcal{T}_{#1}}
\newcommand{\ChainGp}{\mathfrak{C}}
\newcommand{\CQG}{\mathbbm{G}}
\newcommand{\Falg}[1][\alpha]{{\mathcal{O}^{#1}}}
\newcommand{\Hh}{\mathscr{H}}
\newcommand{\inj}{\hookrightarrow}
\newcommand{\InvJ}{\mathcal{F}}
\newcommand{\N}{\mathbbm{N}}
\newcommand{\Ss}{\mathcal{S}}
\newcommand{\TrivRep}{\epsilon}
\newcommand{\UCTclass}{\mathscr{N}}
\newcommand{\Z}{\mathbbm{Z}}
\DeclareMathOperator{\can}{can}
\DeclareMathOperator{\Hom}{Hom}
\DeclareMathOperator{\id}{Id}
\numberwithin{equation}{section}
\newcounter{PartitionDepth}
\newcounter{PartitionLength}
\newsavebox{\boxpaarpart}
\newsavebox{\boxbaarpart}
\newsavebox{\boxdreipart}
\newsavebox{\boxvierpart}
\newsavebox{\boxvierpartrot}
\newsavebox{\boxvierpartrotdrei}
\newsavebox{\boxcrosspart}
\newsavebox{\boxhalflibpart}
\newsavebox{\boxpositioner}
\newsavebox{\boxfatcross}
\newsavebox{\boxprimarypart}
\newcommand{\twocol}{\circ\bullet}
\newsavebox{\boxidpartww}
\newsavebox{\boxidpartbw}
\newsavebox{\boxidpartwb}
\newsavebox{\boxidpartbb}
\newsavebox{\boxidpartsingletonbb}
\newsavebox{\boxidpartsingletonww}
\newsavebox{\boxpaarpartbb}
\newsavebox{\boxpaarpartww}
\newsavebox{\boxpaarpartbw}
\newsavebox{\boxpaarpartwb}
\newsavebox{\boxbaarpartbb}
\newsavebox{\boxbaarpartww}
\newsavebox{\boxbaarpartbw}
\newsavebox{\boxbaarpartwb}
\newsavebox{\boxcutpaarpartbb}
\newsavebox{\boxcutpaarpartww}
\newsavebox{\boxcutpaarpartbw}
\newsavebox{\boxcutpaarpartwb}
\newsavebox{\boxsingletonw}
\newsavebox{\boxsingletonb}
\newsavebox{\boxdownsingletonw}
\newsavebox{\boxdownsingletonb}
\newsavebox{\boxvierpartwbwb}
\newsavebox{\boxvierpartbwbw}
\newsavebox{\boxvierpartwwbb}
\newsavebox{\boxvierpartwbbw}
\newsavebox{\boxvierpartbwwb}
\newsavebox{\boxvierpartrotwbwb}
\newsavebox{\boxvierpartrotbwwb}
\newsavebox{\boxvierpartrotbwbw}
\newsavebox{\boxvierpartrotwwww}
\newsavebox{\boxvierpartrotbbbb}
\newsavebox{\boxdreipartwww}
\newsavebox{\boxsechspartwbwbwb}
\newsavebox{\boxcrosspartwbbw}
\newsavebox{\boxcrosspartbwwb}
\newsavebox{\boxcrosspartwwww}
\newsavebox{\boxcrosspartbbbb}
\newsavebox{\boxhalflibpartwwwwww}
\newsavebox{\boxpositionerd}
\newsavebox{\boxpositionerrpluseins}
\newsavebox{\boxpositionerdt}
\newsavebox{\boxpositioners}
\newsavebox{\boxpositionerdinv}
\newsavebox{\boxpositionersinv}
\newsavebox{\boxpositionerdpluszwei}
\newsavebox{\boxpositionersminuszwei}
\newsavebox{\boxpositionerrnull}
\newsavebox{\boxpositionerwbwb}
\newsavebox{\boxpositionerwwbb}
\newsavebox{\boxpositionerrevwbwb}
\newsavebox{\boxpositionersalphaw}
\newsavebox{\boxpositionersalphab}
\newsavebox{\boxAspace}
\newsavebox{\boxBspace}
\newcommand{\idpartww}{\usebox{\boxidpartww}}
\newcommand{\idpartwb}{\usebox{\boxidpartwb}}
\newcommand{\idpartbb}{\usebox{\boxidpartbb}}
\newcommand{\paarpartww}{\usebox{\boxpaarpartww}}
\newcommand{\paarpartbw}{\usebox{\boxpaarpartbw}}
\newcommand{\paarpartwb}{\usebox{\boxpaarpartwb}}
\newcommand{\paarpartbb}{\usebox{\boxpaarpartbb}}
\newcommand{\baarpartww}{\usebox{\boxbaarpartww}}
\newcommand{\baarpartwb}{\usebox{\boxbaarpartwb}}
\newcommand{\singletonw}{\usebox{\boxsingletonw}}
\newcommand{\singletonb}{\usebox{\boxsingletonb}}
\newcommand{\vierpartwwbb}{\usebox{\boxvierpartwwbb}}
\newcommand{\vierpartrotwwww}{\usebox{\boxvierpartrotwwww}}
\newcommand{\CC}{\mathcal C}
\DeclareMathOperator{\Proj}{Proj}
\DeclareMathOperator{\white}{white}
\DeclareMathOperator{\nest}{nest}
\begin{document}


\newcommand{\arXivNumber}{1606.00569}

\renewcommand{\PaperNumber}{097}

\FirstPageHeading

\ShortArticleName{Fixed Point Algebras for Easy Quantum Groups}

\ArticleName{Fixed Point Algebras for Easy Quantum Groups}

\Author{Olivier GABRIEL~$^\dag$ and Moritz WEBER~$^\ddag$}

\AuthorNameForHeading{O.~Gabriel and M.~Weber}

\Address{$^\dag$~University of Copenhagen, Universitetsparken 5, 2100 K\o{}benhavn \O{}, Denmark}
\EmailD{\href{mailto:olivier.gabriel.geom@gmail.com}{olivier.gabriel.geom@gmail.com}}
\URLaddressD{\url{http://oliviergabriel.eu}}

\Address{$^\ddag$~Fachbereich Mathematik, Universit\"at des Saarlandes,\\
\hphantom{$^\ddag$}~Postfach 151150, 66041 Saabr\"ucken, Germany}
\EmailD{\href{mailto:weber@math.uni-sb.de}{weber@math.uni-sb.de}}

\ArticleDates{Received June 13, 2016, in f\/inal form September 26, 2016; Published online October 01, 2016}

\Abstract{Compact matrix quantum groups act naturally on Cuntz algebras. The f\/irst author isolated certain conditions under which the f\/ixed point algebras under this action are Kirchberg algebras. Hence they are completely determined by their $K$-groups. Building on prior work by the second author, we prove that free easy quantum groups satisfy these conditions and we compute the $K$-groups of their f\/ixed point algebras in a general form. We then turn to examples such as the quantum permutation group $S_n^+$, the free orthogonal quantum group $O_n^+$ and the quantum ref\/lection groups $H_n^{s+}$. Our f\/ixed point-algebra construction provides concrete examples of free actions of free orthogonal easy quantum groups, which are related to Hopf--Galois extensions.}

\Keywords{$K$-theory; Kirchberg algebras; easy quantum groups; noncrossing partitions; fusion rules; free actions; free orthogonal quantum groups; quantum permutation groups; quantum ref\/lection groups}

\Classification{46L80; 19K99; 81R50}

\begin{flushright}
\textit{In memory of the late Professor John E.~Roberts.}
\end{flushright}

\section{Introduction}

This article was initiated from the meeting of the two authors and their respective interests in easy quantum groups and the f\/ixed point algebra construction. Let us start by reminders on the setting of the present article.

\emph{Compact quantum groups} (CQGs) were def\/ined by Woronowicz and further studied in a series of papers \cite{CpctMPseudoGpWoronowicz,TwistedSU2Woronowicz,CpctQGpWoronowicz}. Following the paradigm of noncommutative geometry, the general idea is to describe all properties of a compact group $G$ in terms of its algebra $C(G)$ of (continuous) functions, using in particular a \emph{comultiplication} $\Delta \colon C(G) \to C(G \times G) \simeq C(G) \otimes C(G)$ to realise the group law $\mu \colon G \times G \to G$. If we then consider (possibly noncommutative) $C^*$-algebras with such a comultiplication, we get CQGs as an extension of compact groups. Of course, additional assumptions are needed to make the above rigorous (see Section~\ref{SSec:CMQG} below). To be more precise, we will mainly deal with \emph{compact matrix quantum groups} (CMQGs).

Among CMQGs, there is a class of particular examples, called \emph{easy quantum groups}. Categories of partitions and easy quantum groups were f\/irst def\/ined by Banica and Speicher in~\cite{BS09} in the orthogonal case. Tarrago and the second author extended their approach to the unitary setting, see~\cite{UEQGpTarragoWeber}. To each easy quantum group is associated a category of partitions, which provides a way to ``visualise'' it. The basic idea of easy quantum groups is that they should form a~tractable sub-class of CMQGs, since they can be described and manipulated \textsl{via} their category of partitions, by a~Tannaka--Krein type argument~\cite{TwistedSUWoronowicz}. This line of argument is illustrated by the article~\cite{FreslonWeber13}, where Freslon and the second author provided a description of fusion rules for easy quantum groups based on their categories of partitions.

On another note, the \emph{Cuntz algebra} $\mathcal{O}_n$ has been def\/ined as a universal $C^*$-algebra by Cuntz in his paper~\cite{CuntzAlg} and has evolved over time into one of the most important examples of $C^*$-algebras, with applications to classif\/ication theory and physics. An example of an application is provided by Doplicher and Roberts's abstract, Tannaka--Krein like duality results in a series of articles (see, e.g.,~\cite{CuntzAlgDR,EndomDR}) for actions of (ordinary) compact groups on $C^*$-algebras. This discovery motivated a considerable interest (see for instance~\cite{HilbertCSystBaumgLledo,DualThmPinzariRoberts,BraidedCatPinzariRoberts}). A~basic step of Doplicher--Roberts's duality theory is to consider so-called ``canonical actions'' of compact groups on Cuntz algebras. A source of inspiration for further research in this direction is the article~\cite{SimpleCAlgPinzari}, where Pinzari introduces a~f\/ixed point algebra $\CAlg_{\lambda(G)}$ from the regular representation $\lambda$ acting on $\CAlg_{L^2(G)}$ and proves that given two compact groups $G, G'$, the f\/ixed point algebras $\CAlg_{\lambda(G)}$ and $\CAlg_{\lambda(G')}$ are isomorphic as $\Z$-algebras if and only if $C^*(G) \simeq C^*(G')$.

Motivated by the desire of generalising Doplicher--Roberts theory and following the articles \cite{QGpActCPZ, CpctQGpMarciniak,CoactionCuntzAlgPaolucci}, the f\/irst author considered an action of a CQG $\CQG $ on a Cuntz algebra and described its f\/ixed point algebra. More precisely, two conditions~\ref{Cond:Contra} and \ref{Cond:Entrelac} were introduced, which ensure that the f\/ixed point algebra is actually a Kirchberg algebra in the UCT class $\UCTclass$. Kirchberg--Phillips's classif\/ication theory (see \cite{ClassThmKirchberg,ClassThmKirchbergKP}) then proves that up to $*$-isomorphism, the f\/ixed point algebra is characterised by its $K$-theory. In~\cite{FixedPtGabriel}, examples of computations of the $K$-theory of the f\/ixed point algebra are given~-- they only depend on the fusion rules of~$\CQG$.

In the present article, we combine these two directions of research to describe the f\/ixed point algebras of actions of the free orthogonal quantum group~$O_n^+$, the quantum permutation group~$S^+_n$ and the quantum ref\/lection group~$H^{s+}_n$. An interesting feature of the present f\/ixed point algebra construction is that it provides a very concrete realisation of the intertwiner spaces def\/ining the easy quantum group (see Proposition~\ref{Prop:Intertwiners} below).

The main results of this paper are the reformulation and characterisation of the hypotheses~\ref{Cond:Contra} and~\ref{Cond:Entrelac} of the f\/ixed point algebra construction theory of \cite{FixedPtGabriel} in terms of partition categories (see Theorems~\ref{Prop:Reform} and~\ref{Thm:CharactCP} below), together with the identif\/ications of $K$-theory for the f\/ixed point algebras associated to the natural representations of $O_n^+$, $S^+_n$ and $H^{s+}_n$ (see Theorems~\ref{Thm:A} and~\ref{Thm:Kth} below). The f\/ixed point algebras for~$O_n^+$ and~$S^+_n$ are isomorphic while the one for $H^{s+}_n$ is very dif\/ferent. Thus, in some sense, the actions of $S_n^+$ and $O_n^+$ are somehow ``similar'' while $H_n^{s+}$ acts very dif\/ferently. Moreover, since these f\/ixed point algebras depend only on the fusion rules of the CQGs at hand, this phenomenon manifests concretely that the fusion rules of~$H^{s+}_n$ are very dif\/ferent from those of~$O^+_n$ and~$S^+_n$.

This article is organised as follows: in Section~\ref{Sec:Reminders}, we start by a review of the notions of CQGs and CMQGs, before presenting the notions of categories of partitions and easy quantum groups and discussing Cuntz algebras and actions of CMQGs on these. Section~\ref{Sec:ActionsCuntzAlg} is devoted to the f\/ixed point algebra construction properly speaking, while Sections~\ref{Sec:Examples} and~\ref{Sec:QRefGp} are detailed studies of examples, namely the free orthogonal quantum group $O^+_n$ and the quantum permutation group~$S^+_n$ on the one hand, and the quantum ref\/lection group $H^{s+}_n$ on the other hand.

\section{Reminders and review}\label{Sec:Reminders}

\subsection{Compact matrix quantum groups}\label{SSec:CMQG}

In this article, we consider only \emph{minimal} tensor products of $C^*$-algebras. We will deal with \emph{compact quantum groups} (CQGs) which we denote by $\CQG$. They are def\/ined by a separable unital $C^*$-algebra $C(\CQG)$ together with a unital $*$-algebra homomorphism $\Delta\colon C(\CQG) \to C(\CQG) \otimes C(\CQG)$ which satisf\/ies coassociativity and cancellation properties -- for more details on these objects and their representations, see \cite{CpctMPseudoGpWoronowicz,CpctQGpWoronowicz}.

These compact quantum groups admit (unitary) representations or, equivalently \emph{actions} on Hilbert spaces and $C^*$-algebras. A \emph{unitary representation} $\brep$ of $\CQG $ on a Hilbert space $\Hh$ of f\/inite dimension $d$ is a $C(\CQG )$-valued $d \times d$ matrix $U = (\brep_{ij}) \in M_d(C(\CQG ))$ which is unitary and satisf\/ies the coassociativity property
\begin{gather*}
\Delta(\brep_{ij}) = \sum_{k} \brep_{ik} \otimes \brep_{kj}.
\end{gather*}
In particular, for any $\CQG $, we have a trivial representation denoted by $\TrivRep$, def\/ined by the $(1 \times 1)$-matrix $1 \in C(\CQG )$.

For two f\/ixed representations $u \in M_{d_1}(C(\CQG ))$ and $v \in M_{d_2}(C(\CQG ))$ acting on the Hilbert spaces $\Hh_1$ and $\Hh_2$, respectively, a linear map $T \colon \Hh_1 \to \Hh_2$ is an \emph{intertwiner} (see \cite{CpctMPseudoGpWoronowicz}) if
\begin{gather*} Tu=vT. \end{gather*}
We denote by $\Hom(u, v)$ the set of interwiners between $u$ and $v$. If $\Hom(u, v)$ includes an invertible map, then we say that $u$ and $v$ are \emph{equivalent}. Just like for ordinary compact groups, any representation of $\CQG $ is equivalent to a unitary one, therefore we will only consider unitary representations and we will refer to these as ``representations''. A~representation $\brep$ is called \emph{irreducible} if $\Hom(u, u) \simeq \C$. Representations of CQGs admit notions of direct sum~-- with the above notations, $u \oplus v$ is a representation on $\Hh_1 \oplus \Hh_2$~-- and tensor product~-- $u \otimes v$ is then a~representation on $\Hh_1 \otimes \Hh_2$.

 It is a well-known property of CQGs (see, e.g.,~\cite[Theorem~3.4]{CpctQGpWoronowicz}) that every unitary representation of a CQG is unitarily equivalent to a direct sum of irreducible unitary representations and any irreducible representation is f\/inite-dimensional. Given two representations~$u$ and~$v$, we use the notation $u \leqslant v$ to express that the representation $u$ is \emph{included} in~$v$ (i.e., there is an isometry in $\Hom(u, v)$).

Of particular interest for our investigations are \emph{compact matrix quantum groups} (CMQGs), a~particular class of CQGs. A CMQG is given by a so-called \emph{fundamental representation} $\brep$, whose coef\/f\/icients generate a dense subalgebra of $C(\CQG )$. One can def\/ine CMQGs in the following way:

\begin{Definition}\label{Def:CMQG}
A \emph{compact matrix quantum group} is def\/ined by a unital $C^*$-algebra $A$ generated by elements $u_{ij}$, $1\leq i,j\leq n$ such that $u=(u_{ij})$ and $u^t=(u_{ji})$ are invertible and the map
\begin{gather*}
\Delta\colon \ A\to A\otimes A,\qquad u_{ij}\mapsto \sum_k u_{ik}\otimes u_{kj}
\end{gather*}
is a $*$-homomorphism. A CMQG $(A,u)$ is a \emph{quantum subgroup} of $(B,v)$, if there is a surjective $*$-homomorphism from~$B$ to $A$ mapping $v_{ij}$ to~$u_{ij}$.
\end{Definition}

Of course, in the def\/inition of a CMQG $(A, u)$, we should think of $A$ as the functions on the ``quantum space'' $\CQG _A$ (and $(B,v)$ as associated to $\CQG _B$). Gelfand duality, which is contravariant, therefore explains why the ``inclusion $\CQG _A \inj \CQG _B$'' is represented by a \emph{surjective} morphism $B \to A$.

In this paper, we use the notations $\N_0 = \{ 0,1,2, \ldots \}$ and $\N = \{1, 2, \ldots \}$ (as opposed to \cite{FixedPtGabriel}, where $\N = \{ 0, 1, \ldots \}$).

\subsection{Categories of partitions and easy quantum groups}\label{Sect:Easy}

Colored partitions are key tools for the introduction of unitary easy quantum groups, as done in \cite{UEQGpTarragoWeber}. They generalize the orthogonal easy quantum groups of Banica and Speicher \cite{BS09}. Let $k,l\in\N_0$ and consider a f\/inite ordered set with $k+l$ elements each being colored either in white or in black. A \emph{partition} is a decomposition of this set into disjoint subsets, the \emph{blocks}. Let $P^{\twocol}(k,l)$ denote the set of all such partitions and put $P^{\twocol}:=\bigcup_{k,l\in \mathbb N_0} P^{\twocol}(k,l)$. We usually use a pictorial representation of a partition involving lines representing the block structure, and we assume $k$ of these points to be placed on an upper row and $l$ on a lower row, see \cite{TWcomb}. If these lines may be drawn in a way such that they do not cross, we call the partition \emph{noncrossing}. Let $NC^{\twocol}$ be the set of all noncrossing partitions. Some examples of partitions are the \emph{singleton partitions} $\singletonw\in P^{\twocol}(0,1)$ consisting of a single white lower point or $\singletonb\in P^{\twocol}(0,1)$, the \emph{pair partitions} $\paarpartwb$ and $\paarpartbw$ in $P(0,2)$ consisting of two lower points of dif\/ferent colors which are in the same block, the \emph{identity partitions} $\idpartww,\idpartbb\in P(1,1)$ consisting of one upper and one lower point both of the same color and both in the same block, or the partitions $b_s\in P^{\twocol}(0,s)$ consisting of $s$ lower white points in a single block.

We have the following operations on the set $P^{\twocol}$ of partitions. The \emph{tensor product} of $p\in P^{\twocol}(k, l)$ and $q\in P^{\twocol}(k', l')$ is $p\otimes q\in P^{\twocol}(k+k', l+l')$ obtained by placing $p$ and $q$ side by side. The \emph{composition} of $p\in P^{\twocol}(k, l)$ and $q\in P^{\twocol}(l, m)$ is $qp\in P^{\twocol}(k, m)$ obtained by pla\-cing~$p$ above~$q$. We may only perform it when the color pattern of the lower points of $p$ matches the upper color pattern of $q$. The \emph{involution} of $p\in P^{\twocol}(k, l)$ is $p^{*}\in P^{\twocol}(l, k)$ obtained by ref\/lec\-ting~$p$ at the horizontal axis. The \emph{rotation} of the left upper point of $p\in P^{\twocol}(k, l)$ to the lower row is a~partition in $P^{\twocol}(k-1, l+1)$. When rotating a point, its color is inverted but its membership to a block remains untouched. Likewise we have a rotation of the left lower or right upper/lower points. These operations (tensor product, composition, involution and rotation) are called the \emph{category operations}.

A collection $\CC$ of subsets $\CC(k, l)\subseteq P^{\twocol}(k, l)$ (for every $k, l\in\N_{0}$) is a \emph{category of partitions} if it is invariant under the category operations and if the identity partitions $\idpartww,\idpartbb\in P^{\twocol}(1, 1)$ and the pair partitions $\paarpartwb,\paarpartbw\in P^{\twocol}(0,2)$ are in $\CC$. Note that rotation may be deduced from the other category operations. We write $\CC=\langle p_1,\ldots, p_m\rangle$, if $\CC$ is the smallest category of partitions containing the partitions $p_1,\ldots, p_m$. We say that~$\CC$ is \emph{generated} by $p_1,\ldots,p_m$.

Let $n\in\N$. Given $p \in P^{\twocol}(k,l)$ and two multi-indices $(i(1), \dotsc, i(k))$, $(j(1), \dotsc, j(l))$ with entries in $\{1,\ldots,n\}$, we can label the diagram of $p$ with these numbers (the upper and the lower row both are labelled from left to right, respectively) and we put
\begin{gather*}\delta_p(i,j) =
 \begin{cases}
 1, & \text{if each block of $p$ connects only equal indices}, \\
 0, & \text{otherwise}.
 \end{cases}\end{gather*}
We f\/ix a basis $e_1,\ldots, e_n$ of $\C^n$ and def\/ine a map $T_p\colon (\C^n)^{\otimes k} \to (\C^n)^{\otimes l}$ associated to $p$ by
\begin{gather*}T_p(e_{i(1)} \otimes \dotsm \otimes e_{i(k)}) = \sum_{1 \leq j(1), \dotsc, j(l) \leq n} \delta_p(i, j) \cdot e_{j(1)} \otimes \dotsm \otimes e_{j(l)}. \end{gather*}
We use the convention that $(\C^n)^{\otimes 0}=\C$, i.e., for a partition $p\in P^{\twocol}(0,l)$ with no upper points, $T_p(1)$ is actually a vector in $(\C^n)^{\otimes l}$. Note that the colors of the points of $p$ do not play a role in the def\/inition of $T_p$.
The operations on the partitions match nicely with canonical operations of the linear maps $T_p$, namely we have $T_{p\otimes q}=T_p\otimes T_q$, $T_{qp}=n^{-b(p,q)}T_qT_p$ and $T_{p^*}=(T_p)^*$. Here, $b(p,q)$ denotes the number of removed blocks when composing $p$ and $q$.
The maps $T_p$ can be normalized in such a way that they become partial isometries, see~\cite{FreslonWeber13}.

From a category $\CC$ and the realisation $p\mapsto T_p$, we may construct a concrete monoidal $W^*$-category with a distinguished object $u$, and we thus may assign a CMQG $(A,u)$ to it. We call it the \emph{easy quantum group} associated to $\CC$. If $\CC\subset NC^{\twocol}$, then it is called a \emph{free easy quantum group}. Given a color string $r\in\{\circ,\bullet\}^k$, we may def\/ine $u^r$ as the tensor product of copies~$u$ and~$\bar u$. Here $\circ$ corresponds to $u$ while $\bullet$ corresponds to~$\bar u$. We then may say: a CMQG $S_n\subset \CQG\subset U_n^+$ is easy, if there is a category of partitions $\CC$ such that the intertwiner spaces of $\CQG$ are of the form
\begin{gather*}\{T\,|\, Tu^{r}=u^{s}T\}=\operatorname{span}\{T_p\,|\, p\in\CC(k,l) \textnormal{ with color strings $r$ (upper) and $s$ (lower)}\}.\end{gather*}

Easy quantum groups are a class of CMQGs with a quite intrinsic combinatorial structure. They are completely determined by their associated category of partitions and in many cases certain quantum algebraic properties of an easy quantum group may be traced back to certain combinatorial properties of its category of partitions. A simple criterion for verifying that a~given CMQG is easy is contained in the following lemma.

Let $n\in\N$ and let $A$ be a $C^*$-algebra generated by $n^2$ elements $u_{ij}$, $1\leq i,j\leq n$. Let $p\in P^{\twocol}(k,l)$ be a partition with upper color string $r\in\{\circ,\bullet\}^k$ and lower color string $s\in\{\circ,\bullet\}^l$. We say that the generators $u_{ij}$ \emph{fulfill the relations} $R(p)$, if for all $\beta(1),\ldots, \beta(l)\in\{1,\ldots,n\}$ and for all $i(1),\ldots,i(k)\in\{1,\ldots,n\}$, we have
\begin{gather*}\sum_{\alpha(1),\ldots,\alpha(k)=1}^n \delta_p(\alpha,\beta) u_{\alpha(1)i(1)}^{r_1}\cdots u_{\alpha(k)i(k)}^{r_k}=\sum_{\gamma(1),\ldots,\gamma(l)=1}^n \delta_p(i,\gamma) u_{\beta(1)\gamma(1)}^{s_1}\cdots u_{\beta(l)\gamma(l)}^{s_l}.
\end{gather*}
The left-hand side of the equation is $\delta_p(\varnothing,\beta)$ if $k=0$ and analogously $\delta_p(i,\varnothing)$ for the right-hand side if $l=0$. Furthermore, $u_{ij}^\circ:=u_{ij}$ and $u_{ij}^\bullet:=u_{ij}^*$.

\begin{lem}[{\cite[Corollary~3.12]{UEQGpTarragoWeber}}]
Let $p_1,\ldots,p_m\in P^{\twocol}$ be partitions and let $A$ be the universal $C^*$-algebra generated by elements $u_{ij}$, $1\leq i,j\leq n$ such that $u=(u_{ij})$ and $\bar u=(u_{ij}^*)$ are unitary $($i.e., $\sum_k u_{ik}^*u_{jk}=\sum_k u_{ki}^*u_{kj}=\sum_k u_{ik}u_{jk}^*=\sum_k u_{ki}u_{kj}^*=\delta_{ij})$ and $u_{ij}$ satisfy relations $R(p_l)$ for $l=1, \ldots , m$. Then $A$ is an easy quantum group with associated category $\CC=\langle p_1,\ldots, p_m\rangle$.
\end{lem}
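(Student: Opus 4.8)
The plan is to recast the hypotheses as input for Woronowicz's Tannaka--Krein reconstruction and to read the intertwiner spaces off the partition calculus. The pivotal preliminary step is a dictionary between the relations $R(p)$ and intertwiners: for a partition $p\in P^{\twocol}(k,l)$ with upper colour string $r\in\{\circ,\bullet\}^{k}$ and lower colour string $s\in\{\circ,\bullet\}^{l}$, and for any unital $C^*$-algebra $A$ generated by elements $u_{ij}$ with $\Delta(u_{ij})=\sum_{k}u_{ik}\otimes u_{kj}$ as above, the generators satisfy $R(p)$ if and only if $T_{p}\in\Hom(u^{r},u^{s})$, that is $T_{p}u^{r}=u^{s}T_{p}$. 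This is a direct coefficient comparison: evaluating both sides of $T_{p}u^{r}=u^{s}T_{p}$ on a basis vector $e_{i(1)}\otimes\dots\otimes e_{i(k)}$ of $(\C^n)^{\otimes k}$ and extracting the coefficient of $e_{\beta(1)}\otimes\dots\otimes e_{\beta(l)}$ yields on the left $\sum_{\alpha}\delta_{p}(\alpha,\beta)\,u^{r_{1}}_{\alpha(1)i(1)}\cdots u^{r_{k}}_{\alpha(k)i(k)}$ and on the right $\sum_{\gamma}\delta_{p}(i,\gamma)\,u^{s_{1}}_{\beta(1)\gamma(1)}\cdots u^{s_{l}}_{\beta(l)\gamma(l)}$, which are exactly the two sides of the equation defining $R(p)$ (with $u^{\circ}_{ij}=u_{ij}$, $u^{\bullet}_{ij}=u_{ij}^{*}$, the degenerate cases $k=0$ or $l=0$ reproducing $\delta_{p}(\varnothing,\beta)$, $\delta_{p}(i,\varnothing)$).

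I would then observe that the conditions defining $A$ are themselves all of this type. The relations $\sum_{k}u_{ik}^{*}u_{jk}=\sum_{k}u_{ki}^{*}u_{kj}=\sum_{k}u_{ik}u_{jk}^{*}=\sum_{k}u_{ki}u_{kj}^{*}=\delta_{ij}$ express precisely that $u$ and $\bar u=(u_{ij}^{*})$ are unitary, which by the dictionary applied to the pair partitions is equivalent to the maps $T_{p}$ for $p\in\{\paarpartwb,\paarpartbw\}$ being intertwiners $\TrivRep\to u^{\circ\bullet}$ and $\TrivRep\to u^{\bullet\circ}$ respectively --- i.e.\ to $\bar u$ being the conjugate corepresentation of $u$ in Woronowicz's sense --- while the identity partitions $\idpartww,\idpartbb$ contribute only the tautological intertwiner $\id$. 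Hence $A$ is the universal unital $C^*$-algebra generated by the entries of a matrix $u$ subject to the requirement that $T_{q}\in\Hom(u^{r},u^{s})$ for $q$ in the finite set $\{\idpartww,\idpartbb,\paarpartwb,\paarpartbw,p_{1},\dots,p_{m}\}$; this universal object exists because unitarity of $u$ forces $\|u_{ij}\|\le 1$.

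Finally I would invoke the reconstruction theorem. By functoriality of $p\mapsto T_{p}$, namely $T_{p\otimes q}=T_{p}\otimes T_{q}$, $T_{qp}=n^{-b(p,q)}T_{q}T_{p}$ and $T_{p^{*}}=(T_{p})^{*}$, together with the fact that $\CC:=\langle p_{1},\dots,p_{m}\rangle$ is a category of partitions (closed under the category operations and containing $\idpartww,\idpartbb,\paarpartwb,\paarpartbw$), the linear spans
\begin{gather*}
\mathcal{D}(r,s):=\operatorname{span}\bigl\{T_{p}\,\big|\,p\in\CC(k,l)\text{ with colour strings }r\text{ (upper) and }s\text{ (lower)}\bigr\}
\end{gather*}
form a concrete monoidal $W^{*}$-category with distinguished (generating) object $u$ and conjugate $\bar u$. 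Woronowicz's Tannaka--Krein duality, in the two-coloured partition form used in \cite{UEQGpTarragoWeber}, then produces a CMQG whose full intertwiner spaces between $u^{r}$ and $u^{s}$ are exactly the $\mathcal{D}(r,s)$, and a comparison of universal properties identifies this CMQG with $(A,u)$ itself. Since $\mathcal{D}(r,s)$ is by definition the span of the $T_{p}$ with $p\in\CC$, this is precisely the statement that $(A,u)$ is an easy quantum group --- with $S_{n}\subset\CQG\subset U_{n}^{+}$ because $\langle\,\rangle\subseteq\CC\subseteq P^{\twocol}$ --- whose associated category is $\CC=\langle p_{1},\dots,p_{m}\rangle$.

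The step I expect to be the main obstacle is this last one: checking in detail that the spans $\mathcal{D}(r,s)$ satisfy the axioms that permit reconstruction --- in particular the conjugate (``zig-zag'') equations, which here arise from composing a pair partition with its mirror image and keeping track of the scalar $n^{\pm1}$ produced by removed blocks via $T_{qp}=n^{-b(p,q)}T_{q}T_{p}$ --- and verifying that the universal algebra of the hypothesis is genuinely isomorphic to the algebra reconstructed by the theorem rather than merely a quotient of it; this is exactly the point at which the universality built into the statement is used, since it is what forces $(A,u)$ to have no intertwiners beyond the $T_{p}$ with $p\in\CC$. Everything else reduces to bookkeeping with the partition operations, already encapsulated in the functoriality identities recalled above.
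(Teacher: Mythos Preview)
Your proposal is correct and follows essentially the same route as the paper. The paper's own proof consists of the single observation that the relations $R(p)$ hold if and only if $T_{p}$ intertwines $u^{r}$ and $u^{s}$, and then defers all remaining details to \cite[Lemma~3.9, Corollary~3.12]{UEQGpTarragoWeber}; you have identified exactly this equivalence as the pivotal step and then sketched the Tannaka--Krein argument that the cited reference carries out, so your write-up is in effect an expansion of the paper's one-line proof rather than a different approach.
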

\begin{proof}
The proof relies on the fact that the relations $R(p)$ are fulf\/illed if and only if $T_p$ intertwines $u^{r}$ and $u^{s}$. See \cite[Lemma~3.9, Corollary~3.12]{UEQGpTarragoWeber} for details.
\end{proof}

Note that for $p=\idpartwb$ the relation $R(p)$ ef\/fects that all $u_{ij}$ are selfadjoint. On the combinatorial level this amounts to having partitions whose points have no colors. In this case, we recover the orthogonal easy quantum groups of Banica and Speicher~\cite{BS09}.

\begin{ex}\label{Ex:WangEtc}\quad
\begin{itemize}\itemsep=0pt
\item[(a)] The \emph{free orthogonal quantum group} $O_n^+$ is given by the universal unital $C^*$-algebra $C(O_n^+)$ generated by selfadjoint elements $u_{ij}$, $1\leq i,j\leq n$ such that $u=(u_{ij})$ is an orthogonal matrix. It is easy with category $\CC=\langle \idpartwb\rangle=NC_2$, the set of all (noncolored) noncrossing pair partitions (each block consists of exactly two elements). We omit to write down the generating partitions $\paarpartwb$, $\paarpartbw$ and $\idpartww$, $\idpartbb$ since they are contained in every category, by def\/inition.
\item[(b)] The \emph{free unitary quantum group} $U_n^+$ is given by the universal unital $C^*$-algebra $C(U_n^+)$ generated by elements $u_{ij}$, $1\leq i,j\leq n$ such that $u=(u_{ij})$ and $\bar u=(u_{ij}^*)$ are unitary matrices. It is an easy quantum group with $\CC=\langle\varnothing\rangle$.
\item[(c)] The \emph{quantum permutation group} $S_n^+$ is given by the universal unital $C^*$-algebra $C(S_n^+)$ generated by projections $u_{ij}$, $1\leq i,j\leq n$ such that $\sum_ku_{ik}=\sum_ku_{kj}=1$. It is easy with category $\CC=\langle \idpartwb, \singletonw,\vierpartrotwwww\rangle$.
 \end{itemize}
    The quantum groups in (a), (b) and (c) were introduced by Wang~\cite{FreeProdCQGWang, QSymmGpFiniteSpWang}.
\begin{itemize}\itemsep=0pt
\item[(d)] For $s\in\N$, the \emph{quantum reflection group} $H_n^{s+}$ is given by the universal unital $C^*$-algeb\-ra~$C(H_n^{s+})$ generated by elements $u_{ij}$, $1\leq i,j\leq n$ such that $u=(u_{ij})$ and $\bar u=(u_{ij}^*)$ are unitaries, all $u_{ij}$ are partial isometries and we have $u_{ij}^s=u_{ij}u_{ij}^*=u_{ij}^*u_{ij}$. We have \mbox{$H_n^{1+}=S_n^+$} and \mbox{$H_n^{2+}=H_n^+$}, the latter one being Banica, Bichon and Collins's hyperoctahedral quantum group~\cite{HyperoctQG-BBC}. The quantum ref\/lection groups $H_n^{s+}$ were studied by Banica, Belinschi, Capitaine and Collins in~\cite{BBCC}. They are easy with $\CC=\langle b_s,\vierpartwwbb\rangle$, see \cite{UEQGpTarragoWeber}. Both~$H_n^+$ and~$H_n^{s+}$ can be traced back to Bichon's work on free wreath product \cite{FreeWreathBichon}.
 \end{itemize}
\end{ex}

\subsection{Cuntz algebra}\label{Sect:Cuntz}

The Cuntz algebra $\CAlg_n$ is the universal unital $C^*$-algebra generated by isometries $S_1,\ldots, S_n$ such that $\sum_i S_iS_i^*=1$. It has been def\/ined and studied by Cuntz in \cite{CuntzAlg}. He f\/irst proved that $\CAlg_n$ is a crossed product by an endomorphism and then introduced a criterion for (what we call today) purely inf\/inite and simple algebras. He used it to show that all $\CAlg_n$ are simple.

By an \emph{action} of a CQG on a $C^*$-algebra $A$ we mean a faithful unital $*$-homomorphism $\alpha\colon A\to A\otimes C(\CQG)$ such that $(\alpha\otimes\id)\circ\alpha=(\id\otimes\Delta)\circ\alpha$ holds and $(1\otimes C(\CQG))\alpha(A)$ is linearly dense in $A\otimes C(\CQG)$. In our case, we have the following action on the Cuntz algebra, as observed by Cuntz \cite{RegActionHopfCuntz} (see also \cite{ActCMQGKNW}).

\begin{prop}\label{Prop:Action}
Let $\CQG$ be a CMQG such that $\CQG\subset U_n^+$ $($i.e., the matrices $u$ and $\bar u$ are unitaries$)$. It acts on $\CAlg_n$ by
\begin{gather*} \alpha(S_i) = \sum_{j=1}^n S_j \otimes u_{ji}. \end{gather*}
\end{prop}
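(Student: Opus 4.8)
The plan is to verify directly that the formula $\alpha(S_i) = \sum_{j} S_j \otimes u_{ji}$ extends to a well-defined $*$-homomorphism $\CAlg_n \to \CAlg_n \otimes C(\CQG)$ and then check the two remaining axioms of an action. Since $\CAlg_n$ is the \emph{universal} $C^*$-algebra generated by isometries $S_1,\dots,S_n$ with $\sum_i S_iS_i^* = 1$, to obtain the $*$-homomorphism it suffices to show that the elements $V_i := \alpha(S_i) = \sum_j S_j \otimes u_{ji} \in \CAlg_n \otimes C(\CQG)$ are again isometries satisfying $\sum_i V_iV_i^* = 1$. First I would compute $V_i^* V_i = \sum_{j,k} S_j^*S_k \otimes u_{ji}^* u_{ki} = \sum_{j,k} \delta_{jk} 1 \otimes u_{ji}^* u_{ki} = 1 \otimes \sum_j u_{ji}^*u_{ji}$, using that the $S_j$ are isometries with orthogonal ranges (which follows from $\sum_j S_jS_j^*=1$ together with each $S_j$ being an isometry). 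The unitarity of $\bar u = (u_{ij}^*)$, i.e. the relation $\sum_j u_{ji}^*u_{ji} = 1$ from Proposition (the $U_n^+$ condition $\CQG\subset U_n^+$), then gives $V_i^*V_i = 1$. Similarly $\sum_i V_iV_i^* = \sum_i \sum_{j,k} S_jS_k^* \otimes u_{ji}u_{ki}^* = \sum_{j,k} S_jS_k^* \otimes \sum_i u_{ji}u_{ki}^* = \sum_{j,k} S_jS_k^*\otimes \delta_{jk} 1 = \sum_j S_jS_j^* \otimes 1 = 1\otimes 1$, using unitarity of $u$ (the relation $\sum_i u_{ji}u_{ki}^* = \delta_{jk}$). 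By universality, $\alpha$ is a well-defined unital $*$-homomorphism.

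Next I would check coassociativity $(\alpha\otimes\id)\circ\alpha = (\id\otimes\Delta)\circ\alpha$ on the generators $S_i$ (it then holds everywhere since both sides are $*$-homomorphisms). On the left, $(\alpha\otimes\id)(\alpha(S_i)) = (\alpha\otimes\id)\big(\sum_j S_j\otimes u_{ji}\big) = \sum_j \alpha(S_j)\otimes u_{ji} = \sum_{j,k} S_k\otimes u_{kj}\otimes u_{ji}$. On the right, $(\id\otimes\Delta)(\alpha(S_i)) = \sum_j S_j \otimes \Delta(u_{ji}) = \sum_j S_j\otimes\sum_k u_{jk}\otimes u_{ki} = \sum_{j,k} S_j \otimes u_{jk}\otimes u_{ki}$, and after relabelling summation indices the two expressions agree. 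Here I use exactly the comultiplication formula $\Delta(u_{ij}) = \sum_k u_{ik}\otimes u_{kj}$ from Definition~\ref{Def:CMQG}.

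Finally I would verify the density (Podleś/saturation) condition: $(1\otimes C(\CQG))\alpha(\CAlg_n)$ is linearly dense in $\CAlg_n\otimes C(\CQG)$. The key computation is that one can recover the generators $S_j\otimes 1$: from $\alpha(S_i) = \sum_j S_j\otimes u_{ji}$ we get, for fixed $j$, $\sum_i (1\otimes u_{ji}^*)\alpha(S_i) = \sum_i\sum_k S_k\otimes u_{ji}^*u_{ki} = \sum_k S_k\otimes\big(\sum_i u_{ji}^*u_{ki}\big) = \sum_k S_k\otimes\delta_{jk}1 = S_j\otimes 1$, again using $\bar u$ unitary. Hence every $S_j\otimes 1$ lies in $(1\otimes C(\CQG))\alpha(\CAlg_n)$; since products of the $S_j$ and $S_j^*$ span a dense subalgebra of $\CAlg_n$, and $\alpha$ is multiplicative, one gets $a\otimes 1$ in the closed span for all $a\in\CAlg_n$ (for the $S_j^*\otimes 1$ factors, apply the $*$-operation or note that $(1\otimes u_{ji})\alpha(S_i)^*$ expressions recover $S_j^*\otimes 1$ symmetrically). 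Multiplying by arbitrary $1\otimes b$ with $b\in C(\CQG)$ then yields a dense subset of all elementary tensors $a\otimes b$, proving density.

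I do not expect any genuine obstacle here: every step reduces to the defining relations of $\CAlg_n$ and the unitarity of $u$ and $\bar u$. The only point requiring a little care is the density condition, where one must be slightly attentive about recovering both the $S_j\otimes 1$ and their adjoints inside $(1\otimes C(\CQG))\alpha(\CAlg_n)$; using that $\bar u$ is unitary (which is precisely why the hypothesis $\CQG\subset U_n^+$ rather than merely $\CQG\subset O_n^+$-style unitarity of $u$ alone is convenient) this is routine. Faithfulness of $\alpha$ follows because $(\id\otimes h)\circ\alpha = \id$ where $h$ is the Haar state, or more elementarily because $\alpha$ composed with the counit $\epsilon\colon C(\CQG)\to\C$ on the second leg recovers $\id_{\CAlg_n}$, so $\alpha$ is injective.
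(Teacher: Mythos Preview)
Your proof follows essentially the same route as the paper: universal property of $\CAlg_n$ to get $\alpha$, direct verification of coassociativity on generators, and the same computation $\sum_i (1\otimes u_{ji}^*)\alpha(S_i)=S_j\otimes 1$ for the density condition. You simply spell out the details that the paper leaves implicit.

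One correction and one comment on faithfulness. The claim $(\id\otimes h)\circ\alpha=\id$ is false: applying the Haar state on the second leg gives the conditional expectation onto the fixed point subalgebra, not the identity (for instance $(\id\otimes h)\alpha(S_i)=\sum_j S_j\,h(u_{ji})$, which is typically $0$). Your counit argument is correct on the algebraic level, but the counit need not extend boundedly to the full $C^*$-algebra $C(\CQG)$ in general (it does for the universal completion, hence for the easy quantum groups under consideration, so you are fine here). The paper's argument is cleaner and sidesteps this subtlety entirely: $\CAlg_n$ is simple, so any nonzero $*$-homomorphism out of it is automatically injective. A minor labeling slip: the relation $\sum_j u_{ji}^*u_{ji}=1$ used for $V_i^*V_i=1$ is the $(i,i)$ entry of $u^*u=1$, i.e.\ unitarity of $u$, not of~$\bar u$.
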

\begin{proof}
For the convenience of the reader, we give the proof since it is a short argument. By the universal property of $\CAlg_n$, the unital $*$-homomorphism $\alpha$ exists, and it is faithful since $\CAlg_n$ is simple. It is straightforward to check that $(\alpha\otimes\id)\circ\alpha=(\id\otimes\Delta)\circ\alpha$ is satisf\/ied, and the computation (using orthogonality of $u$)
\begin{gather*}\sum_k(1\otimes u_{ik}^*)\bigg(\sum_lS_l\otimes u_{lk}\bigg)=S_i\otimes 1\end{gather*}
shows that the linear span of $(1\otimes C(\CQG))\alpha(A)$ equals $A\otimes C(\CQG)$.
\end{proof}

It is a well-known fact that we may f\/ind copies of matrix algebras inside the Cuntz algebra. Indeed, the monomials $S_{j(1)}\cdots S_{j(k)}S_{i(k)}^*\cdots S_{i(1)}^*$ satisfy the relations of matrix units, thus we have
\begin{gather*}\operatorname{span}\{S_{j(1)}\cdots S_{j(k)}S_{i(k)}^*\cdots S_{i(1)}^*\,|\, 1\leq i(t),j(t)\leq d\}\cong B\big(\big(\C^n\big)^{\otimes k}\big).\end{gather*}
Thus, $S_{j(1)}\cdots S_{j(k)}S_{i(k)}^*\cdots S_{i(1)}^*$ corresponds exactly to the rank one operator mapping $e_{i(1)}\otimes \cdots \otimes e_{i(k)}$ to $e_{j(1)}\otimes \cdots \otimes e_{j(k)}$. More generally, we may identify linear maps
\begin{align*}
T\colon \ (\C^n)^{\otimes k}&\to (\C^n)^{\otimes l},\\
e_{i(1)}\otimes\cdots \otimes e_{i(k)}&\mapsto\sum_{j(1),\ldots, j(l)} a(i(1),\ldots, i(k),j(1),\ldots,j(l)) e_{j(1)}\otimes\cdots \otimes e_{j(l)}
\end{align*}
(where $a(i(1),\ldots, i(k),j(1),\ldots,j(l))\in\C$) with elements
\begin{gather*}\sum_{i(1),\ldots, i(k), j(1), \ldots, j(l)=1}a(i(1),\ldots, i(k),j(1),\ldots,j(l)) S_{j(1)}\cdots S_{j(l)} S_{i(k)}^*\cdots S_{i(1)}^*\end{gather*}
in $\CAlg_n$. Hence, we may view the maps $T_p:(\C^n)^{\otimes k}\to (\C^n)^{\otimes l}$ indexed by partitions $p\in P(k,l)$ as elements in $\CAlg_n$ via
\begin{gather*}T_p \leftrightarrow
\sum_{i(1),\ldots, i(k), j(1), \ldots, j(l)=1}^n \delta_p(i, j) S_{j(1)}\cdots S_{j(l)} S_{i(k)}^*\cdots S_{i(1)}^* .\end{gather*}

\section{Actions of easy quantum groups on the Cuntz algebra}\label{Sec:ActionsCuntzAlg}

\subsection{The f\/ixed point algebra}

As explained in Section~\ref{Sect:Cuntz}, any given CQMG acts naturally on a Cuntz algebra $\CAlg_n$ for a suitable choice of $n$. Our aim is to understand the f\/ixed point algebra in the case of easy quantum groups.

\begin{Definition}Let $\CQG$ be a CMQG with fundamental representation $u=(u_{ij})$ and let $\alpha$ be the action as in Proposition~\ref{Prop:Action}. The \emph{fixed point algebra} $\CAlg^\alpha$ is def\/ined as
\begin{gather*}
\CAlg^\alpha:= \{x\in \CAlg_n\,|\, \alpha(x)=x\otimes 1\}.
\end{gather*}
\end{Definition}

In the case of easy quantum groups, we may read the f\/ixed point algebras directly from the categories of partitions. The following statement appeared in \cite[Proposition~3.4]{CpctQGpMarciniak}, see also \cite[Lemma~2.5]{FixedPtGabriel}.

\begin{prop}\label{Prop:Intertwiners}\looseness=-1
Given $\CC$, let $\CQG$ be the associated easy QG. The intersections of the fixed point algebra $\CAlg^\alpha$ with the copies of $B((\C^n)^{\otimes k},(\C^n)^{\otimes l})$ in $\CAlg_n$ $($as described in Section~{\rm \ref{Sect:Cuntz})} are given by
\begin{gather*}
\CAlg^{\alpha}\cap B\big((\C^n)^{\otimes k},(\C^n)^{\otimes l}\big)=\operatorname{span}\{T_p\in \CAlg_n\,|\, p\in\CC(k,l) \textnormal{ all points are white}\}.
\end{gather*}
\end{prop}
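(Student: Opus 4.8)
The plan is to identify the fixed point algebra $\CAlg^\alpha$ inside each matrix block $B((\C^n)^{\otimes k},(\C^n)^{\otimes l})$ by directly computing how the action $\alpha$ behaves on the monomials $S_{j(1)}\cdots S_{j(l)}S_{i(k)}^*\cdots S_{i(1)}^*$ that span this block. First I would record that $\alpha(S_i)=\sum_j S_j\otimes u_{ji}$ implies $\alpha(S_i^*)=\sum_j S_j^*\otimes u_{ji}^*$, and hence, writing a general element of the block as $T\leftrightarrow\sum a(i,j)S_{j(1)}\cdots S_{j(l)}S_{i(k)}^*\cdots S_{i(1)}^*$ under the identification of Section~\ref{Sect:Cuntz}, one gets
\begin{gather*}
\alpha(T)=\sum_{i,j}\sum_{i',j'} a(i,j)\, S_{j'(1)}\cdots S_{j'(l)}S_{i'(k)}^*\cdots S_{i'(1)}^* \otimes u_{j'(1)j(1)}\cdots u_{j'(l)j(l)} u_{i'(k)i(k)}^*\cdots u_{i'(1)i(1)}^*.
\end{gather*}
Matching coefficients of the matrix-unit basis on the left tensor leg, the condition $\alpha(T)=T\otimes 1$ becomes, for all $i',j'$,
\begin{gather*}
\sum_{i,j} a(i,j)\, u_{j'(1)j(1)}\cdots u_{j'(l)j(l)} u_{i'(k)i(k)}^*\cdots u_{i'(1)i(1)}^* = a(i',j')\cdot 1 .
\end{gather*}
Reorganising (using unitarity of $u$ to move the adjoint terms to the other side), this is precisely the statement that the operator $T$ intertwines $u^{\otimes k}$ and $u^{\otimes l}$ in the white (i.e.\ all-$u$, no-$\bar u$) color pattern: $Tu^{\otimes k}=u^{\otimes l}T$.

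The second step is to invoke the easiness of $\CQG$: by definition of an easy quantum group the intertwiner space $\{T\mid Tu^r=u^sT\}$ equals $\operatorname{span}\{T_p\mid p\in\CC(k,l)\text{ with color strings }r,s\}$. Applying this with $r=(\circ,\dots,\circ)$ and $s=(\circ,\dots,\circ)$ the all-white strings of lengths $k$ and $l$ gives exactly
\begin{gather*}
\CAlg^{\alpha}\cap B\big((\C^n)^{\otimes k},(\C^n)^{\otimes l}\big)=\operatorname{span}\{T_p\mid p\in\CC(k,l)\text{, all points white}\},
\end{gather*}
which is the claim. One small point to be careful about: the set of partitions appearing is $\CC(k,l)$ restricted to the all-white coloring, but since rotation inverts colors, every partition of $\CC$ has a representative with any prescribed boundary coloring that is compatible with its block structure — here one just takes the white-white coloring directly as the relevant subset $\CC(k,l)$, and the colors play no role in $T_p$ anyway, as noted after the definition of $T_p$.

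The main obstacle, and the only place the argument needs genuine care, is the bookkeeping in the first step: one must correctly track the reversal of indices in the $S^*$-tail of the monomials and confirm that the resulting relation on the $u_{ij}$ is the intertwiner relation in the correct order, rather than, say, its transpose or its conjugate. Concretely, the subtlety is that $\alpha(T)=T\otimes 1$ reads off as $\sum_{i,j}a(i,j)u^{\otimes l}_{j'j}(u^{\otimes k})^*_{i'i}=a(i',j')$, and one rewrites this as $\sum_{i,j}a(i,j)u^{\otimes l}_{j'j}(u^{\otimes k})^*_{i'i}u^{\otimes k}_{i''i'}$ summed appropriately, using $\sum_{i'}(u^{\otimes k})^*_{i'i}u^{\otimes k}_{i'i''}=\delta_{ii''}$ from unitarity, to land on $\sum_{j}a(i'',j)u^{\otimes l}_{j'j}=\sum_{i}a(i,j')u^{\otimes k}_{?}$... — i.e.\ one should simply phrase it as: $T\in\CAlg^\alpha$ iff $(u^{\otimes l})^*\,\alpha\text{-matrix}\,u^{\otimes k}$ collapses, iff $T$ commutes past $u$, and this is cleanest done by noting the well-known equivalence (already used implicitly in the excerpt, e.g.\ in Proposition~\ref{Prop:Action}'s style of computation and in \cite[Lemma~2.5]{FixedPtGabriel}) between $\alpha$-invariance of the element corresponding to $T$ and $T$ being an intertwiner of $u^{\otimes k}$ with $u^{\otimes l}$. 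Once that equivalence is in hand, easiness finishes the proof immediately.
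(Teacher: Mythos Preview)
Your proposal is correct and follows essentially the same route as the paper: both arguments reduce membership in $\CAlg^{\alpha}\cap B((\C^n)^{\otimes k},(\C^n)^{\otimes l})$ to the intertwiner condition $Tu^{\otimes k}=u^{\otimes l}T$ via the formula for $\alpha$ on monomials and unitarity of $u$, then invoke easiness to identify that intertwiner space with $\operatorname{span}\{T_p\}$. The only cosmetic difference is that the paper carries out the forward direction explicitly for $T_p$ using the relations $R(p)$ and then says ``reversing the above computation'' for the converse, whereas you do both directions at once for a general $T$; your last paragraph's bookkeeping could be tightened, but the deferral to the known equivalence (as in \cite[Lemma~2.5]{FixedPtGabriel}) is exactly what the paper does as well.
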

\begin{proof}The general proof may be found in \cite[Proposition~3.4]{CpctQGpMarciniak}, but we give a direct argument here. Let $p\in\CC(k,l)$ be a partition with only white points. Hence, in $C(\CQG )$ the relations $R(p)$ hold. Moreover, we use the fact that $u$ is unitary. Now
\begin{gather*}
\alpha(T_p) = \sum_{i,i',j,j'} \delta_p(i, j) S_{j'(1)}\cdots S_{j'(l)}S_{i'(k)}^*\cdots S_{i'(1)}^* \otimes u_{j'(1)j(1)}\cdots u_{j'(l)j(l)}u_{i'(k)i(k)}^*\cdots u_{i'(1)i(1)}^*\\
\hphantom{\alpha(T_p)}{}
= \sum_{i',j'} S_{j'(1)}\cdots S_{j'(l)}S_{i'(k)}^*\cdots S_{i'(1)}^*\\
\hphantom{\alpha(T_p) =}{}
\otimes\bigg(\sum_i\bigg(\sum_{j}\delta_p(i, j)u_{j'(1)j(1)}\cdots u_{j'(l)j(l)}\bigg)u_{i'(k)i(k)}^*\ldots u_{i'(1)i(1)}^*\bigg)\\
\hphantom{\alpha(T_p)}{}
= \sum_{i',j'} S_{j'(1)}\cdots S_{j'(l)}S_{i'(k)}^*\cdots S_{i'(1)}^*\\
\hphantom{\alpha(T_p) =}{}
\otimes \bigg(\sum_i\bigg(\sum_{s}\delta_p(s, j')u_{s(1)i(1)}\cdots u_{s(k)i(k)}\bigg)u_{i'(k)i(k)}^*\cdots u_{i'(1)i(1)}^*\bigg)
\\
\hphantom{\alpha(T_p)}{}
= \sum_{i',j'} S_{j'(1)}\cdots S_{j'(l)}S_{i'(k)}^*\cdots S_{i'(1)}^*\\
\hphantom{\alpha(T_p) =}{}
\otimes \bigg(\sum_{s}\delta_p(s, j')\bigg(\sum_iu_{s(1)i(1)}\cdots u_{s(k)i(k)}u_{i'(k)i(k)}^*\cdots u_{i'(1)i(1)}^*\bigg)\bigg)\\
\hphantom{\alpha(T_p)}{}
= \sum_{i',j'} S_{j'(1)}\cdots S_{j'(l)}S_{i'(k)}^*\cdots S_{i'(1)}^* \otimes \bigg(\sum_{s}\delta_p(s, j')\delta_{si'}\bigg)\\
\hphantom{\alpha(T_p)}{}
= \sum_{j'} \delta_p(i', j') S_{j'(1)}\cdots S_{j'(l)}S_{i'(k)}^*\cdots S_{i'(1)}^* \otimes 1=T_p\otimes 1.
\end{gather*}

Conversely, let $T\in \CAlg^{\alpha}\cap B((\C^n)^{\otimes k},(\C^n)^{\otimes l})$. Reversing the above computation, we see that~$T$ intertwines $u^{\otimes k}$ and $u^{\otimes l}$. But $\Hom(u^{\otimes k},u^{\otimes l})$ consists exactly of the linear span of all $T_p\in \CAlg_n$ such that $p\in\CC(k,l)$ has only white points.
\end{proof}

\begin{prop}
The algebraic result of Proposition~{\rm \ref{Prop:Intertwiners}} extends into a topological one, namely,
\begin{gather*}
\CAlg^{\alpha} = \overline{\operatorname{span}\{T_p\in \CAlg_n\,|\, p\in\CC(k,l) \; \text{all points are white}, \; \forall \, k, l \in \N_0\}}.
\end{gather*}
\end{prop}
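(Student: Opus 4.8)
The statement to be proven is the ``topological'' upgrade of Proposition~\ref{Prop:Intertwiners}: that the fixed point algebra $\CAlg^\alpha$ equals the \emph{closed} linear span of all $T_p$ with $p\in\CC(k,l)$ and all points white, ranging over all $k,l\in\N_0$. The inclusion $\supseteq$ is immediate: each such $T_p$ is fixed by $\alpha$ (this is exactly the computation in the proof of Proposition~\ref{Prop:Intertwiners}), and $\CAlg^\alpha$ is a norm-closed subspace of $\CAlg_n$ (it is the kernel of the bounded map $x\mapsto \alpha(x)-x\otimes 1$), so it contains the closed span. The content is therefore the reverse inclusion $\CAlg^\alpha\subseteq\overline{\operatorname{span}}\{T_p\}$.

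The standard tool is the \emph{conditional expectation} onto the fixed point algebra obtained by averaging the action $\alpha$ against the Haar state $h$ of $\CQG$: define $E\colon\CAlg_n\to\CAlg_n$ by $E(x)=(\id\otimes h)\circ\alpha(x)$. One checks in the usual way that $E$ is a norm-one projection onto $\CAlg^\alpha$ (positivity and contractivity come from $\alpha$ being a $*$-homomorphism and $h$ a state; the range is $\CAlg^\alpha$ by the coassociativity identity $(\alpha\otimes\id)\alpha=(\id\otimes\Delta)\alpha$ together with invariance of the Haar state). Consequently $\CAlg^\alpha=E(\CAlg_n)=\overline{E(\mathcal{D})}$ for \emph{any} dense $*$-subalgebra $\mathcal{D}\subseteq\CAlg_n$, since $E$ is continuous.

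Now take $\mathcal{D}$ to be the dense $*$-subalgebra of $\CAlg_n$ spanned by the ``matrix-unit'' monomials $S_{j(1)}\cdots S_{j(l)}S_{i(k)}^*\cdots S_{i(1)}^*$ described in Section~\ref{Sect:Cuntz}; this is dense because the Cuntz relations let one rewrite any word in the $S_i,S_i^*$ in this normal form, and finite linear combinations of such words are dense in $\CAlg_n$. It then suffices to show $E$ sends each such monomial into $\operatorname{span}\{T_p\mid p\in\CC, \text{all white}\}$. Applying $\alpha$ to $S_{j(1)}\cdots S_{j(l)}S_{i(k)}^*\cdots S_{i(1)}^*$ and then $\id\otimes h$ produces a sum, over output indices, of monomials $S_{j'(1)}\cdots S_{j'(l)}S_{i'(k)}^*\cdots S_{i'(1)}^*$ with coefficient $h\big(u_{j'(1)j(1)}\cdots u_{j'(l)j(l)}u_{i'(k)i(k)}^*\cdots u_{i'(1)i(1)}^*\big)$; that is, $E$ of this monomial is exactly the image under the identification of Section~\ref{Sect:Cuntz} of the operator $(\id\otimes h)\alpha$ applied in $B((\C^n)^{\otimes k},(\C^n)^{\otimes l})$ — equivalently, of the orthogonal projection of the corresponding rank-one operator onto $\Hom(u^{\otimes k},u^{\otimes l})$. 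By the easiness hypothesis, that intertwiner space is exactly $\operatorname{span}\{T_p\mid p\in\CC(k,l),\ \text{all points white}\}$, which is what we want. Hence $E(\mathcal{D})$ lies in the algebraic span, and taking closures gives $\CAlg^\alpha=\overline{E(\mathcal D)}\subseteq\overline{\operatorname{span}}\{T_p\}$.

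**Main obstacle.** The only genuinely non-formal point is verifying that $E=(\id\otimes h)\circ\alpha$ is indeed a faithful conditional expectation \emph{onto} $\CAlg^\alpha$ (as opposed to onto something smaller or larger), and that $E(x)$ of a matrix-unit monomial really is the Haar-averaging of the representation $u^{\otimes k}\to u^{\otimes l}$ rather than of some twisted version — one must track the appearance of $\bar u$-entries carefully and use unitarity of $u$, exactly as in the displayed computation in the proof of Proposition~\ref{Prop:Intertwiners} (which already did the harder, $\bar u$-involving bookkeeping). Everything else — density of $\mathcal{D}$, continuity of $E$, closedness of $\CAlg^\alpha$ — is routine. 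A cleaner alternative, if one prefers to avoid reproving properties of $E$: invoke directly the description of $\CAlg^\alpha$ as $(\id\otimes h)\alpha(\CAlg_n)$ from \cite{FixedPtGabriel} or \cite{CpctQGpMarciniak}, reducing the proof to the last paragraph above.
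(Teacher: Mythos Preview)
Your proof is correct and follows essentially the same strategy as the paper, only more explicitly. The paper's argument is two lines: it cites \cite[Lemma~2.7]{FixedPtGabriel} (and \cite[Lemma~6]{ActCMQGKNW}) for the fact that the \emph{algebraic} fixed points --- elements of $\CAlg^\alpha$ that are finite polynomials in the $S_j,S_j^*$ --- are dense in $\CAlg^\alpha$, and then invokes Proposition~\ref{Prop:Intertwiners} to identify each such algebraic fixed element with a linear combination of $T_p$'s. What you do is unpack the cited lemma: the density of algebraic fixed points is precisely what the Haar conditional expectation $E=(\id\otimes h)\circ\alpha$ gives you, and your computation that $E$ of a monomial lands back in $B\big((\C^n)^{\otimes k},(\C^n)^{\otimes l}\big)$ (hence, by Proposition~\ref{Prop:Intertwiners}, in $\operatorname{span}\{T_p\}$) is exactly the mechanism behind that lemma.

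One small remark on presentation: you do not need the separate claim that Haar-averaging yields the ``orthogonal projection onto $\Hom(u^{\otimes k},u^{\otimes l})$''. It suffices to observe that $E(\text{monomial})\in\CAlg^\alpha\cap B\big((\C^n)^{\otimes k},(\C^n)^{\otimes l}\big)$ --- the first membership because $E$ lands in $\CAlg^\alpha$, the second because the form of $\alpha$ preserves the $(k,l)$ degree --- and then apply Proposition~\ref{Prop:Intertwiners} directly. This avoids the extra (true but unnecessary) appeal to properties of Haar averaging on representation spaces and mirrors the paper's logic exactly.
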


\begin{proof}
Lemma~2.7 of \cite[p.~1017]{FixedPtGabriel} (see also \cite[Lemma~6]{ActCMQGKNW}) ensures that ``algebraic elements'' (i.e., those obtained by (f\/inite) polynomial combination of $S_j$) of $\CAlg^\alpha$ are dense in $\CAlg^\alpha$. Proposition~\ref{Prop:Intertwiners} above ensures that any such algebraic element is in the linear span of $T_p$, so the conclusion follows.
\end{proof}

\subsection{Obtaining Kirchberg algebras}

In \cite{FixedPtGabriel}, the f\/irst author isolated two conditions which turn $\CAlg^\alpha$ into a Kirchberg algebra. This class of algebras plays a central role in Kirchberg--Phillips classif\/ication theory, which proves that Kirchberg algebras are completely classif\/ied by their $K$-groups -- see \cite{ClassThmKirchberg,ClassThmKirchbergKP} for the original papers, \cite{RordamStormer} for an overview of classif\/ication for nuclear simple $C^*$-algebras and \cite{QuasidiagTWW} for the latest developments in this area.

We denote by $\CatInd$ the set of (classes of) irreducible representations appearing in the iterated tensor products $\brep^{\otimes l}$ for $l \in \N_0$.
\begin{enumerate}[label=(C\arabic*)]\itemsep=0pt
\item\label{Cond:Contra}
For any $v \in \CatInd $, we can f\/ind $v' \in \CatInd$ such that the representation $v \otimes v'$ possesses a nonzero invariant vector.
\item\label{Cond:Entrelac}
There are integers $N, k_0 \in \N_0$ such that $\brep^{\otimes N}$ is contained in $\brep^{\otimes (N+k_0)}$ and for all inte\-gers~$t$,~$l$ with $0 < t < k_0$, $\Hom(\brep^{\otimes l},\brep^{\otimes(l+t)}) = 0$.
\end{enumerate}

Condition \ref{Cond:Contra} is actually satisf\/ied for all f\/inite-dimensional semisimple Hopf algebras over~$\C$, see \cite[Remark~7.5]{FixedPtGabriel} and \cite[Theorem~of Section~4.2]{HigherFrobSchurKSZ}. The following result has been proven by the f\/irst author, see \cite[Lemma~2.10, Corollary~4.7, Lemma~6.4]{FixedPtGabriel}. It holds true in a much more general setting but we restrict it to CMQG.

\begin{prop}\label{Prop:PrevResults}
Let $\CQG$ be a CMQG and let $\alpha$ be its action on $\CAlg_n$ as described in Proposition~{\rm \ref{Prop:Action}}. If the conditions~{\rm \ref{Cond:Contra}} and~{\rm \ref{Cond:Entrelac}} are satisfied, then the fixed point algebra $\CAlg^\alpha$ is a~Kirchberg algebra, i.e., it is purely infinite, simple, separable, unital and nuclear $($satisfying the UCT$)$.
\end{prop}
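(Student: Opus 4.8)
The plan is to realise $\CAlg^\alpha$ as the Cuntz--Pimsner algebra of a C*-correspondence over an AF ``core'' and to read each Kirchberg property off the structure theory of such algebras, invoking \ref{Cond:Contra} and \ref{Cond:Entrelac} exactly where they are needed. The soft properties come for free: $\CAlg^\alpha$ is separable as a C*-subalgebra of the separable algebra $\CAlg_n$, and unital because $\alpha(1)=1\otimes 1$. I would then bring in the gauge action $\gamma\colon\mathbb T\to\operatorname{Aut}(\CAlg_n)$, $\gamma_z(S_i)=zS_i$. Since $\alpha\circ\gamma_z=(\gamma_z\otimes\id)\circ\alpha$, it restricts to $\CAlg^\alpha$, yielding a topological $\Z$-grading with $(\CAlg^\alpha)_t=\overline{\operatorname{span}\{T_p\,|\,p\in\CC(k,l),\ l-k=t,\ \text{all points white}\}}$, a faithful conditional expectation $\Phi=\int_{\mathbb T}\gamma_z\,dz$ onto $F:=(\CAlg^\alpha)_0$, and---using that $\alpha$ preserves each $M_{n^k}\cong B((\C^n)^{\otimes k})\subset\CAlg_n^\gamma$---the identification $F=\overline{\bigcup_k\operatorname{End}_{\CQG}((\C^n)^{\otimes k})}$. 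This presents $F$ as an increasing union of finite-dimensional multimatrix algebras with unital connecting maps $x\mapsto x\otimes\id_n$, i.e.\ as a unital AF algebra; in particular $F$ is nuclear, lies in the bootstrap class $\UCTclass$, and has $K_0(F)=\varinjlim_k K_0(\operatorname{End}_{\CQG}((\C^n)^{\otimes k}))$, the fusion group.

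The structural step uses \ref{Cond:Entrelac}: the vanishing $\Hom(\brep^{\otimes l},\brep^{\otimes(l+t)})=0$ for $0<t<k_0$ (and its adjoint) forces $(\CAlg^\alpha)_t=0$ unless $k_0\mid t$, and the inclusion $\brep^{\otimes N}\leqslant\brep^{\otimes(N+k_0)}$ supplies an isometry of degree $k_0$ in $\CAlg^\alpha$. From this I would verify that $X:=(\CAlg^\alpha)_{k_0}$, a full C*-correspondence over $F$ under the obvious actions and inner product $\langle\xi,\eta\rangle=\xi^*\eta$, satisfies $(\CAlg^\alpha)_{mk_0}\cong X^{\otimes_F m}$ for $m\geq 0$, and that the defining relations of $\CAlg^\alpha$ are precisely the Cuntz--Pimsner relations, so $\CAlg^\alpha\cong\mathcal O_X$ with gauge grading matching $\gamma$. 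Since $F$ is nuclear, $\mathcal O_X$ is nuclear; and Pimsner's six-term sequence exhibits $\mathcal O_X$ as $KK$-equivalent to a mapping cone built from $F$, whence $\mathcal O_X\in\UCTclass$. This settles nuclearity and the UCT.

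Simplicity and pure infiniteness are where \ref{Cond:Contra} is used. For simplicity, let $0\neq I\vartriangleleft\CAlg^\alpha$; one first checks that every ideal is gauge-invariant (aperiodicity of $X$, again a consequence of the hypotheses), so faithfulness of $\Phi$ gives $I\cap F\neq 0$. Then $I\cap F$ contains a central summand of $\operatorname{End}_{\CQG}((\C^n)^{\otimes k})$, indexed by some irreducible $v\leqslant\brep^{\otimes k}$; choosing $v'\in\CatInd$ with $\TrivRep\leqslant v\otimes v'$ (with $v'\leqslant\brep^{\otimes m}$, say) and pushing forward along the inclusion $\operatorname{End}_{\CQG}((\C^n)^{\otimes k})\hookrightarrow\operatorname{End}_{\CQG}((\C^n)^{\otimes(k+m)})$, $x\mapsto x\otimes\id$, forces $I\cap F$ to meet the trivial summand of $\operatorname{End}_{\CQG}((\C^n)^{\otimes(k+m)})$, hence to contain the unit; so $I=\CAlg^\alpha$. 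For pure infiniteness, since $\CAlg^\alpha$ is already simple it is enough to show every nonzero projection is properly infinite: the invariant vector of $v\otimes v'$ produces, via the identification of intertwiners with elements of $\CAlg_n$ from Section~\ref{Sect:Cuntz}, an isometry in $\CAlg^\alpha$ compressing the identity of the $v$-isotypic corner to a proper subprojection, and letting $v$ run through the infinitely many tensor powers of $\brep$ yields infinitely many pairwise orthogonal equivalent subprojections below any given nonzero projection.

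The main obstacle is the structural step: turning the representation-theoretic conditions \ref{Cond:Contra} and \ref{Cond:Entrelac} into the operator-algebraic facts that $\CAlg^\alpha=\mathcal O_X$ for the correspondence $X$ over its AF core---verifying $(\CAlg^\alpha)_{mk_0}\cong X^{\otimes_F m}$, injectivity of the left $F$-action on $X$ (or the computation of Katsura's ideal), and the aperiodicity needed for the simplicity criterion. Once this dictionary is in place, nuclearity and the UCT are formal, and \ref{Cond:Contra} plays, for $\CAlg^\alpha$, the role that $\sum_iS_iS_i^*=1$ plays for $\mathcal O_n$ in Cuntz's original simplicity and pure-infiniteness argument.
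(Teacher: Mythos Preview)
The paper does not prove this proposition at all: it is quoted as a result of the first author from \cite{FixedPtGabriel}, with the sentence preceding the statement pointing to \cite[Lemma~2.10, Corollary~4.7, Lemma~6.4]{FixedPtGabriel}. There is therefore no in-paper argument to compare your attempt against; from the standpoint of this article the ``proof'' is a one-line citation.

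Your sketch is a reasonable blueprint for how such a result is established, and is broadly in the spirit of \cite{FixedPtGabriel}: the gauge action restricted to $\CAlg^\alpha$, the AF core $F=\overline{\bigcup_k\operatorname{End}_\CQG((\C^n)^{\otimes k})}$, the use of \ref{Cond:Entrelac} to control the grading, and the use of \ref{Cond:Contra} to push any nonzero ideal of $F$ up to the unit are all the right ingredients. The Cuntz--Pimsner packaging you propose is a legitimate alternative viewpoint, though you correctly flag that the identification $(\CAlg^\alpha)_{mk_0}\cong X^{\otimes_F m}$ and the aperiodicity/gauge-invariance of ideals are the places where real work is hidden; these are exactly the points where \cite{FixedPtGabriel} spends its effort (via inductive-limit arguments rather than an explicit $\mathcal O_X$ model, as suggested by the later appeal to \cite[Theorem~5.4]{FixedPtGabriel} for $K$-theory). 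Your pure-infiniteness paragraph is the vaguest part: the passage from ``invariant vector in $v\otimes v'$'' to ``an isometry in $\CAlg^\alpha$ compressing the identity of the $v$-isotypic corner to a proper subprojection'' needs a precise construction, and ``letting $v$ run through tensor powers'' does not by itself give pure infiniteness of an arbitrary nonzero projection. But since the paper itself defers entirely to \cite{FixedPtGabriel}, none of this is a discrepancy with the text---you have simply written more than the authors did here.
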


We are now going to study which easy quantum groups satisfy conditions~\ref{Cond:Contra} and~\ref{Cond:Entrelac}. In order to do so, let us rephrase these conditions in the language of partitions. The representation theory of easy quantum groups -- i.e., the set~$\CatInd$ -- is completely understood and can be given in terms of partitions, see~\cite{FreslonWeber13}. We review some facts from~\cite{FreslonWeber13}.

We say that a partition $p\in P^{\twocol}(k,k)$ is \emph{projective}, if $p=p^*=p^2$. It follows that $T_p$ is a~projection up to normalization. Moreover, the upper points of $p$ are colored exactly like the lower points of~$p$. If $q\in P^{\twocol}(k,k)$ is another projective partition, we write $q\prec p$ if $pq=qp=q$ and $p\neq q$. In this case, $T_q$ is a subprojection of~$T_p$. Given a category $\CC$ of partitions, we write $\Proj_{\CC}(k)$ for the set of all projective partitions in $\CC(k,k)$. For $p\in \Proj_{\CC}(k)$ with upper (and equivalently lower) color string~$s$, we put
\begin{gather*}
R_{p} := \bigvee_{q\in \Proj_{\CC}(k),\; q \prec p} T_{q}, \qquad \text{and} \qquad
P_{p} := T_{p} - R_{p} \in \Hom(u^{s}, u^{s}).
\end{gather*}
We denote by $u_{p}$ the subrepresentation $(\id\otimes P_{p})(u^s)$ of $u^s$. Two such subrepresenta\-tions~$u_p$ and~$u_q$ are unitarily equivalent if and only if there is a partition~$r\in\CC$ such that $p=r^*r$ and $q=rr^*$.

\begin{prop}[{\cite[Theorem~5.5]{FreslonWeber13}}]\label{Prop:FreslonWeber} If $\CC\subset NC^{\twocol}$, then $u_p$ is irreducible. Furthermore, any irreducible representation of the associated~$\CQG $ is unitarily equivalent to some~$u_p$ for some $p\in \Proj_{\CC}$, thereby inducing a one-to-one correspondence.
\end{prop}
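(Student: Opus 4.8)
The plan is to derive the statement from the easiness of $\CQG$ together with the combinatorics of noncrossing projective partitions, following \cite{FreslonWeber13}. I describe three steps and indicate where the hypothesis $\CC\subset NC^{\twocol}$ is genuinely needed.

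\emph{Step 1 (morphism spaces).} By easiness and Woronowicz's Tannaka--Krein reconstruction, for any color strings one has $\Hom(u^{r},u^{s})=\Span\{T_p\mid p\in\CC(k,l)\ \text{with upper string }r\text{ and lower string }s\}$; in particular $\Hom(u^{s},u^{s})$ is spanned by the $T_q$, $q\in\CC(k,k)$ with color string~$s$, and (with all $T$'s normalized to partial isometries as in Section~\ref{Sect:Easy}) it contains $P_p=T_p-R_p$. Two elementary observations are used repeatedly: since $P_p\leqslant T_p$ one has $P_pT_p=T_pP_p\in\C P_p$; and since $T_q\leqslant R_p$ for every projective $q\prec p$ in~$\CC$, one has $P_pT_qP_p=0$ for such~$q$.

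\emph{Step 2 (irreducibility; the hard step).} One must show that $\Hom(u_p,u_p)=P_p\Hom(u^{s},u^{s})P_p$ is one-dimensional, i.e.\ that $P_pT_qP_p\in\C P_p$ for every $q\in\CC(k,k)$ with color string~$s$. Using $P_pT_p\in\C P_p$, this reduces to understanding $P_pT_{pqp}P_p$, where $T_{pqp}$ is proportional (by a power of~$n$) to $T_pT_qT_p$ and $pqp$ is the composite partition. The key lemma is that for $\CC\subseteq NC^{\twocol}$ two-sided composition with a projective~$p$ preserves~$\CC$ and cannot leave the order interval below~$p$: one puts a noncrossing projective partition into normal form --- its through-blocks forming a noncrossing ``core'', with nested caps above and mirrored below --- and checks by a case analysis that $pqp$ either reproduces~$p$ or factors through some projective $q'\prec p$. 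In the first case $P_pT_{pqp}P_p\in\C P_p$; in the second $T_{pqp}$ is dominated by $R_p$, so $P_pT_{pqp}P_p=0$. Hence $\Hom(u_p,u_p)=\C P_p$ and $u_p$ is irreducible. This is precisely where noncrossingness matters: in a crossing category, passing to the category generated by~$q$ forces new through-blocks, $pqp$ need not be comparable with~$p$, and $u_p$ may be reducible. Making this normal form and the case analysis precise is the main obstacle of the whole argument.

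\emph{Step 3 (exhaustion and the bijection).} Since $u$ and $\bar u$ generate the tensor category of representations of $\CQG$, every irreducible $v$ occurs in some $u^{s}$ (its matrix coefficients are polynomials in the $u_{ij}$ and $u_{ij}^{*}$). Inside $\Hom(u^{s},u^{s})$ the projections $P_p$ ($p$ projective with string~$s$), together with the partial isometries $T_r$ for $r\in\CC$ with $r^{*}r$ and $rr^{*}$ projective, behave like a system of matrix units indexed by the poset $(\Proj_{\CC}(k),\prec)$; using that an arbitrary $r\in\CC(k,l)$ factors through projective partitions --- again a consequence of the noncrossing structure --- one verifies this family spans $\Hom(u^{s},u^{s})$. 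It follows that every minimal subprojection of the unit of $\Hom(u^{s},u^{s})$ is equivalent to some~$P_p$, whence $v\cong u_p$ for some $p\in\Proj_{\CC}(k)$ with color string~$s$, and in particular for some $p\in\Proj_{\CC}$. Finally, as recalled just before the statement, a partition $r\in\CC$ with $p=r^{*}r$ and $q=rr^{*}$ gives an isomorphism $u_p\cong u_q$, while Step~2 supplies the converse implication; so $p\mapsto u_p$ descends to a well-defined injection from $\sim$-classes of projective partitions in $\CC$ into the set of irreducible representations of $\CQG$, and the previous sentence shows it is surjective. This is the claimed one-to-one correspondence.
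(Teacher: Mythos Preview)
The paper does not supply its own proof of this proposition; it is quoted as \cite[Theorem~5.5]{FreslonWeber13} and used as a black box, so there is no argument in the present article to compare against.

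Viewed as an outline of the Freslon--Weber proof, your three steps have the right architecture, and you correctly identify Step~2 as the crux. One caution, though: the reduction from $P_pT_qP_p$ to $P_pT_{pqp}P_p$ is fine, but your dichotomy ``$pqp$ either reproduces $p$ or factors through some projective $q'\prec p$'' is too coarse as stated. The composite $pqp$ is in general not projective, and the relation~$\prec$ is only defined between projective partitions, so ``$pqp\prec p$'' (or ``factors through $q'\prec p$'') is not yet meaningful. What \cite{FreslonWeber13} actually uses is the through-block decomposition $q=q_l^{*}q_u$ of a noncrossing partition, the fact that the number of through-blocks can only drop under composition, and the linear independence of the $T_q$ over noncrossing~$q$ (their Lemma~4.16, which this paper also invokes in the proof of Theorem~\ref{Thm:CharactCP}). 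With these in hand one shows that $P_p$ annihilates every $T_q$ with strictly fewer through-blocks than~$p$, and that among partitions with the same through-block pattern the compressed algebra collapses to $\C P_p$. Your remark that ``making this normal form and the case analysis precise is the main obstacle'' is accurate; I would only add that the obstacle is not resolved by the stated dichotomy alone --- the through-block count and the linear-independence lemma are the missing ingredients.
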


Since only tensor powers of $u$ play a role for conditions \ref{Cond:Contra} and \ref{Cond:Entrelac}, let us denote by $\Proj_{\CC}^{\white}$ and $\CC^{\white}$ the restrictions of $\Proj_{\CC}$ and $\CC$ to those partitions consisting only of white points.

\begin{thm}\label{Prop:Reform}
Let $\CQG$ be a free easy quantum group, namely the associated category of parti\-tions~$\CC$ contains only noncrossing partitions. The following two conditions imply the conditions~{\rm \ref{Cond:Contra}} and~{\rm \ref{Cond:Entrelac}}.
\begin{enumerate}[label={\rm (C$_P$\arabic*)}]\itemsep=0pt
\item\label{Cond:ContraPart}\looseness=-1
For any projective partition $p\in \Proj^{\white}_{\CC}(a)$, we can find a projective partition $q\!\in\!\Proj^{\white}_{\CC}(b)$ and a partition $r\!\in\!\CC^{\white}(0,a{+}b)$ with no upper points such that $(P_p\otimes P_q)T_r\!\neq\! 0$.
\item\label{Cond:EntrelacPart}
There are integers $k_0, N \in \N_0$ and a partition $r\in \CC^{\white}(N+k_0,N)$ such that $rr^*=\idpartww^{\otimes N}$. Moreover, for all $t\in\N_0$ with $0 < t < k_0$ and for all $l\in\N_0$ we have $\CC^{\white}(l,l+t)=\varnothing$.
\end{enumerate}
\end{thm}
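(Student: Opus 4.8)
The plan is to read both hypotheses through the dictionary between the category $\CC$ and the representation theory of $\CQG$ furnished by Proposition~\ref{Prop:FreslonWeber} and the defining intertwiner formula for easy quantum groups; once that dictionary is in place the translation is almost tautological. Two facts do essentially all of the work. First, since only tensor powers $\brep^{\otimes l}$ (no conjugates) enter conditions~\ref{Cond:Contra} and~\ref{Cond:Entrelac}, every colour string in sight is white, so $\Hom(\brep^{\otimes k},\brep^{\otimes l})=\operatorname{span}\{T_p\mid p\in\CC^{\white}(k,l)\}$; in particular $\Hom(\brep^{\otimes k},\brep^{\otimes l})=0$ whenever $\CC^{\white}(k,l)=\varnothing$. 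Second, by Proposition~\ref{Prop:FreslonWeber} the irreducible representations occurring in the various $\brep^{\otimes l}$ are exactly, up to unitary equivalence, the $u_p$ with $p\in\Proj^{\white}_{\CC}$: a white projective partition of size $a$ yields $u_p\leqslant\brep^{\otimes a}$, and conversely any irreducible $w\leqslant\brep^{\otimes l}$ equals $u_p$ for a projective partition $p$ whose colour string is forced to be all white. Thus $\CatInd=\{u_p\mid p\in\Proj^{\white}_{\CC}\}$. I would also record two elementary observations: $P_p\otimes P_q$ is the intertwiner projecting $\brep^{\otimes(a+b)}$ onto its subrepresentation $u_p\otimes u_q$; and a nonzero element of $\Hom(\TrivRep,w)$ is the same thing as a nonzero invariant vector of $w$ (identifying a morphism $\C\to H_w$ with its value at~$1$), the invariant vectors of $\brep^{\otimes m}$ being spanned by the vectors $T_r(1)$ with $r\in\CC^{\white}(0,m)$.

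Granting this, \ref{Cond:ContraPart}$\Rightarrow$\ref{Cond:Contra} is immediate. Let $v\in\CatInd$ and write $v=u_p$ with $p\in\Proj^{\white}_{\CC}(a)$. Applying~\ref{Cond:ContraPart} yields $q\in\Proj^{\white}_{\CC}(b)$ and $r\in\CC^{\white}(0,a+b)$ with $(P_p\otimes P_q)T_r\neq 0$; put $v':=u_q\in\CatInd$. Since $T_r(1)$ is an invariant vector of $\brep^{\otimes(a+b)}$ and $P_p\otimes P_q$ is an intertwiner onto the copy of $v\otimes v'$, the vector $(P_p\otimes P_q)T_r(1)\neq 0$ is a nonzero invariant vector of $v\otimes v'$, which is precisely~\ref{Cond:Contra}.

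For \ref{Cond:EntrelacPart}$\Rightarrow$\ref{Cond:Entrelac} I would use the functoriality relations $T_{qp}=n^{-b(p,q)}T_qT_p$, $T_{p^*}=(T_p)^*$, and $T_{\idpartww^{\otimes N}}=\id_{(\C^n)^{\otimes N}}$. From $r\in\CC^{\white}(N+k_0,N)$ with $rr^*=\idpartww^{\otimes N}$ one gets $n^{-b(r^*,r)}T_rT_r^*=\id_{(\C^n)^{\otimes N}}$, hence $T_rT_r^*=n^{\,b(r^*,r)}\id$; therefore $n^{-b(r^*,r)/2}T_r^*$ is an isometry in $\Hom(\brep^{\otimes N},\brep^{\otimes(N+k_0)})$, so $\brep^{\otimes N}\leqslant\brep^{\otimes(N+k_0)}$. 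For the second half of~\ref{Cond:Entrelac}, the hypothesis $\CC^{\white}(l,l+t)=\varnothing$ for all $l\in\N_0$ and all $t$ with $0<t<k_0$ gives $\Hom(\brep^{\otimes l},\brep^{\otimes(l+t)})=0$ for exactly those $l,t$ by the first fact above. Taken together, these two statements are~\ref{Cond:Entrelac}.

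None of the individual steps is long; the work is bookkeeping rather than a genuine difficulty. I expect the two points that need care to be: verifying that $\CatInd$ is genuinely exhausted by the white projective partitions, which rests on the fact that the colour string of a projective partition occurring in the decomposition of $\brep^{\otimes l}$ is all white together with the bijection of Proposition~\ref{Prop:FreslonWeber}; and keeping the normalisation factors $n^{-b(\,\cdot\,,\,\cdot\,)}$ straight in the coisometry computation, since it is precisely these that make $T_rT_r^*$ a positive scalar multiple of the identity rather than the identity itself. I would also point out why only the forward implication is asserted: for small $n$ the maps $T_p$ may be linearly dependent or even vanish, so $\CC^{\white}(l,l+t)=\varnothing$ is strictly stronger than $\Hom(\brep^{\otimes l},\brep^{\otimes(l+t)})=0$, and likewise \ref{Cond:ContraPart} commits to particular witnesses $q$ and $r$; a converse would therefore need extra hypotheses on $n$.
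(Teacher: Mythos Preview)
Your proof is correct and follows essentially the same route as the paper's: both translate \ref{Cond:ContraPart} and \ref{Cond:EntrelacPart} into \ref{Cond:Contra} and \ref{Cond:Entrelac} via the dictionary of Proposition~\ref{Prop:FreslonWeber} and the intertwiner description $\Hom(\brep^{\otimes k},\brep^{\otimes l})=\operatorname{span}\{T_p\mid p\in\CC^{\white}(k,l)\}$. The only cosmetic differences are that the paper verifies invariance of $(P_p\otimes P_q)T_r(1)$ by an explicit one-line computation with $u_p\otimes u_q$, whereas you invoke that $P_p\otimes P_q$ is an intertwining projection; and for \ref{Cond:EntrelacPart} the paper writes $u^{\otimes N}T_rT_r^*=T_ru^{\otimes(N+k_0)}T_r^*$ and appeals to $T_rT_r^*=1$ ``up to normalization'', while you track the constant $n^{b(r^*,r)}$ explicitly and exhibit the normalized $T_r^*$ as an isometry---same content, slightly different packaging.
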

\begin{proof} Due to Proposition~\ref{Prop:FreslonWeber}, the set $\CatInd$ is in bijection with equivalence classes of projective partitions in $\CC$. Let us f\/irst prove that~\ref{Cond:ContraPart} implies~\ref{Cond:Contra}. Let $p\in \Proj_{\CC}$ be a projective partition with only white points and let $u_p\in\CatInd$ be the associated irreducible representation. Let $q\in\Proj_{\CC}$ and $r\in\CC(0,a+b)$ be partitions according to~\ref{Cond:ContraPart}. Then $T_r(1)$ is a vector in $(\C^n)^{\otimes a+b}$ as described in Section~\ref{Sect:Easy} and hence $(P_p\otimes P_q)T_r(1)$ is a non-zero vector. Now, since~$P_p$,~$P_q$ and $T_r$ are intertwiners for tensor powers of $u$, we have:
\begin{gather*}
(u_p\otimes u_q)(P_p\otimes P_q)T_r
=(\id\otimes(P_p\otimes P_q))\big(u^{\otimes a+b}\big)(P_p\otimes P_q)T_r\\
\hphantom{(u_p\otimes u_q)(P_p\otimes P_q)T_r}{}
 =(\id\otimes(P_p\otimes P_q))\big(u^{\otimes a+b}\big)T_r =(P_p\otimes P_q)T_r.
\end{gather*}
This proves \ref{Cond:Contra}. As for deducing \ref{Cond:Entrelac} from \ref{Cond:EntrelacPart}, observe that $u^{\otimes N}T_r=T_ru^{\otimes N+k_0}$ implies $u^{\otimes N}T_rT_r^*=T_ru^{\otimes N+k_0}T_r^*$. Since $T_rT_r^*=1$ up to normalization, this proves that $u^{\otimes N}$ is a subrepresentation of $u^{\otimes N+k_0}$. Moreover, $\Hom(\brep^{\otimes l},\brep^{\otimes(l+t)})$ is spanned by all $T_p$ with $p\in\CC^{\white}(l,l+t)$. We have proved~\ref{Cond:Entrelac}.
\end{proof}

The advantage of dealing with free easy quantum groups is that they are all known: the class of categories $\CC\subset NC^{\twocol}$ -- and hence the class of free easy quantum groups -- is completely classif\/ied. The complete list may be found in \cite[Section~7]{TWcomb}.

Throughout the classif\/ication process, a parameter $k(\CC)\in \N_0$ is assigned to any category of partitions $\CC$, see \cite[Def\/inition~2.5]{TWcomb}. It is given by the following. For a partition $p\in P^{\twocol}$ denote by $c_\circ(p)$ the sum of the number of white points on the lower line of $p$ and the number of black points on its upper line. Likewise put $c_\bullet(p)$ to be the number of lower black points plus the number of upper white points. Let $c(p):=c_\circ(p)-c_\bullet(p)$. The number $c(p)$ is designed in such a way that it yields the dif\/ference of the number of white and black points, if $p$ has no upper points; moreover $c(p)$ is invariant under rotation. Now, let $k(\CC)$ be the minimum of all numbers $c(p)>0$ for $p\in\CC$ if such a number exists, and $k(\CC):=0$ otherwise. One can show that for every partition $p\in\CC$ the number $c(p)$ is a multiple (from the integers) of $k(\CC)$. One of the main results in \cite{UEQGpTarragoWeber} is then to detect the cyclic group of order $k(\CC)$ as a building block in the easy quantum group associated to~$\CC$. The parameter $k(\CC)$ also appears in connection with conditions~\ref{Cond:ContraPart} and~\ref{Cond:EntrelacPart}.

\begin{thm}\label{Thm:CharactCP}
Let $\CQG$ be an easy quantum group with $\CC\subset NC^{\twocol}$.
\begin{itemize}\itemsep=0pt
\item[$(a)$] If $k(\CC)=0$, then neither condition~{\rm \ref{Cond:ContraPart}} nor~{\rm \ref{Cond:EntrelacPart}} are satisfied.
\item[$(b)$] If $k(\CC)\neq 0$ and $\paarpartww\otimes\paarpartbb\in\CC$, then condition~{\rm \ref{Cond:ContraPart}} holds.
\item[$(c)$] If $k(\CC)\neq 0$ and $\vierpartwwbb\in\CC$, then condition~{\rm \ref{Cond:ContraPart}} holds.
\item[$(d)$] If $k(\CC)\neq 0$, then condition~{\rm \ref{Cond:EntrelacPart}} holds.
\end{itemize}
\end{thm}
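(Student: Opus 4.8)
The whole argument is organised around the colour--difference functional $c$ — which is rotation invariant and satisfies $c(p)\in k(\CC)\Z$ for every $p\in\CC$, so that $k(\CC)=0$ forces $c\equiv 0$ on $\CC$ — together with the representation--theoretic translation furnished by the proof of Theorem~\ref{Prop:Reform} and by Propositions~\ref{Prop:Intertwiners} and~\ref{Prop:FreslonWeber}: condition~\ref{Cond:EntrelacPart} is the partition incarnation of ``$\brep^{\otimes N}\leqslant\brep^{\otimes(N+k_0)}$ with no shorter shifts'', and condition~\ref{Cond:ContraPart} for a white projective $p$ says that $\overline{u_p}$ occurs in some $\brep^{\otimes l}$, i.e.\ that $\CatInd$ is closed under conjugation. \emph{Part~$(a)$.} Since $c\equiv 0$ on $\CC$, a fortiori on $\CC^{\white}$. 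Testing~\ref{Cond:ContraPart} on the projective partition $p=\idpartww\in\Proj^{\white}_{\CC}(1)$: any admissible $q,r$ would have $r\in\CC^{\white}(0,1+b)$ for some $b\geqslant 0$, hence $c(r)=1+b\geqslant 1$ — impossible; so~\ref{Cond:ContraPart} fails. For~\ref{Cond:EntrelacPart}, read with a genuine shift $k_0\geqslant 1$ (the intended non-degenerate reading; with $k_0=0$ it is vacuously true for every $\CC$), any $r\in\CC^{\white}(N+k_0,N)$ has $c(r)=-k_0\neq 0$, again impossible.

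\emph{Part~$(d)$.} The ``moreover'' clause is immediate: $s\in\CC^{\white}(l,l+t)$ would have $c(s)=-t$ with $0<t<k(\CC)$, contradicting minimality of $k(\CC)$; hence $\CC^{\white}(l,l+t)=\varnothing$. For the partition $r$ I would first reduce to producing a single all-white partition $s'\in\CC^{\white}(k(\CC),0)$: given such an $s'$, take $r:=s'$ (i.e.\ $N=0$); then $rr^{*}=s'(s')^{*}$ is the empty partition (the operator $T_{s'}T_{s'}^{*}$ is a nonzero scalar, so $s'(s')^{*}=\idpartww^{\otimes 0}$ up to normalisation) and $r\in\CC^{\white}(k(\CC),0)$, as wanted. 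So~\ref{Cond:EntrelacPart} reduces to the assertion $k(\CC)\neq 0\Rightarrow\CC^{\white}(0,k(\CC))\neq\varnothing$. To prove this, start from some $p_{0}\in\CC$ with $c(p_{0})=k(\CC)$, rotate all of its points onto a single row, and \emph{whiten} the resulting partition while keeping $c$ fixed; for the whitening one runs through the classification of the categories $\CC\subseteq NC^{\twocol}$ of~\cite[Section~7]{TWcomb} and exhibits, case by case, an explicit all-white partition of colour--weight $k(\CC)$.

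\emph{Parts~$(b)$ and~$(c)$.} By the translation above it suffices to prove that $\CatInd$ is closed under conjugation, and since the $u_p$ with $p\in\Proj^{\white}_{\CC}(1)$ tensor--generate $\CatInd$, it is enough to produce (for some $l$) an all-white partition yielding an \emph{injective} intertwiner $\bar\brep\to\brep^{\otimes l}$. From $k(\CC)\neq 0$ one has (as in part~$(d)$, or directly from the extra data) an all-white partition of colour--weight $k(\CC)$, whose rotation already gives an intertwiner $\bar\brep\to\brep^{\otimes(k(\CC)-1)}$; the extra partition is then used to remove the possible degeneracy of this intertwiner. Concretely, a suitable rotation of $\paarpartww\otimes\paarpartbb$ corresponds (up to normalisation) to the rank--one projection onto $\C\big(\sum_i e_i\otimes e_i\big)$ inside $\brep^{\otimes 2}$ in case~$(b)$, and a suitable rotation of $\vierpartwwbb$ corresponds to the projection onto $\operatorname{span}\{e_i\otimes e_i\}$ inside $\brep^{\otimes 2}$ in case~$(c)$; either one lets one replace a capping singleton by a through--block and so make the intertwiner $\bar\brep\to\brep^{\otimes l}$ injective. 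Finally, once $\overline{u_p}\cong u_q$ for a white projective $q$, the required $(P_p\otimes P_q)T_r\neq 0$ exists because the invariant vectors of $\brep^{\otimes(a+b)}$ are spanned by the $T_r(1)$ with $r\in\CC^{\white}(0,a+b)$, and $P_p\otimes P_q$ cannot annihilate all of them.

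\emph{The main obstacle.} The one genuinely non-formal ingredient, shared by $(b)$, $(c)$ and $(d)$, is the whitening: rotations invert colours, so a colour--mixed partition cannot be rotated to an all-white one, and the partitions $\idpartww,\idpartbb,\paarpartwb,\paarpartbw$ present in every category generate only the ``$U_n^+$-like'' part, on which $c\equiv 0$. Producing an all-white partition of prescribed colour--weight $k(\CC)$ (and, in $(b)$ and~$(c)$, one yielding an injective intertwiner) therefore forces one to invoke either the explicit list of $NC^{\twocol}$-categories or a dedicated structural lemma from~\cite{TWcomb,FreslonWeber13}.
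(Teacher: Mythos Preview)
Your argument for part~(a) is correct and matches the paper's. For part~(d), your reduction to exhibiting some $p_0\in\CC^{\white}(0,k(\CC))$ is exactly right, but you do not need the classification: the paper obtains $p_0$ uniformly by taking any $p\in\CC$ with $c(p)=k(\CC)$, rotating it to a one-row partition, and then invoking a structural lemma from~\cite{TWcomb} (their Lemma~1.1(b)) which allows one to \emph{erase} neighbouring black--white pairs while remaining in~$\CC$; this removes all black points without changing~$c$ and yields $p_0\in\CC^{\white}(0,k(\CC))$ directly. So the ``whitening'' obstacle you flag is resolved by a single lemma, not a case analysis. (Minor point: for $s\in\CC^{\white}(l,l+t)$ one has $c(s)=t$, not $-t$; this does not affect your conclusion.)

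For parts~(b) and~(c), your reduction of~\ref{Cond:ContraPart} to ``$\CatInd$ is closed under conjugation'' is valid and clean, and your final step---that some $T_r(1)$ must survive $P_p\otimes P_q$ because such vectors span the invariants---is correct. The gap is in the middle: you have not shown that the hypothesis $\paarpartww\otimes\paarpartbb\in\CC$ (resp.\ $\vierpartwwbb\in\CC$) actually forces $\overline{u_p}\cong u_q$ for some \emph{white} projective~$q$. Your sketch (``rotate the extra partition to get a projection, then remove degeneracy of the intertwiner $\bar\brep\to\brep^{\otimes l}$'') does not do this: rotations of $\paarpartww\otimes\paarpartbb$ produce intertwiners between mixed-colour tensor powers, not an injective map $\bar\brep\to\brep^{\otimes l}$, and ``degeneracy removal'' is never made precise. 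The paper does not argue through conjugation-closedness abstractly; it builds $q$ and $r$ by hand. In~(b), $q$ is the vertical reflection $q_1$ of $p$ (shown to lie in $\CC$ via verticolor reflection and the colour-permutation lemmas of~\cite{TWcomb}) tensored with nested white pairs $q_0=\paarpartww^{\nest(b)}\otimes\baarpartww^{\nest(b)}$ chosen so that $2(a+b)$ is a multiple of~$k(\CC)$; it is precisely the hypothesis $\paarpartww\otimes\paarpartbb\in\CC$ that, via colour permutation, places these nested \emph{white} pairs into~$\CC$. Then $r=\paarpartww^{\nest(a+b)}$, and $(P_p\otimes P_q)T_r\neq 0$ is deduced from a linear-independence lemma in~\cite{FreslonWeber13}. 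In~(c), the through-block decomposition of~$p$ is used and $q$ is a tensor of one-block white partitions of suitable sizes, which lie in~$\CC$ because $\vierpartwwbb\in\CC$ lets one manufacture arbitrary-size white blocks. So the extra partitions are not used to fix a single rank defect; they supply a reservoir of all-white building blocks (nested pairs, resp.\ large blocks) from which $q$ and $r$ are assembled explicitly.
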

\begin{proof}
(a) Observe that if a partition $p$ is in $\CC^{\white}(l,l+t)$, then $c(p)=t$. Thus, if $k(\CC)=0$, all sets $\CC^{\white}(l,l+t)$ are empty for all $l\in\N_0$ and all $t\neq 0$, using \cite[Proposition~2.7]{TWcomb}. Therefore neither condition~\ref{Cond:ContraPart} nor condition~\ref{Cond:EntrelacPart} hold.

(b) Let $p\in\Proj^{\white}_{\CC}(a)$ be a projective partition. We basically want to show that the contragredient representation of $u_p$ does the job for choosing $q$, but the colorization of the points turns this into a nontrivial problem (see also the next Examples~\ref{Ex:CP} and the remarks in~\cite[p.~1019]{FixedPtGabriel} on condition~\ref{Cond:Contra}). Let $q_1$ be the partition obtained by ref\/lecting $p$ about the vertical axis (without inverting the colors). It is in~$\CC$, since the verticolor ref\/lected partition $\tilde p$ is in $\CC$ \cite[Lemma~1.1(a)]{TWcomb}, and $p\otimes p\otimes \tilde p$ is in~$\CC$; using color permutation \cite[Lemmas~1.3(a) and~1.1(b)]{TWcomb}, we infer $q_1\in\CC$. Now, let $b\in\N_0$ be such that~$2(a+b)$ is a~multiple of~$k(\CC)$. Let $q_0=\paarpartww^{\nest(b)}\otimes \baarpartww^{\nest(b)} $ be the partition obtained by nesting $b$ copies of the pair partition~$\paarpartww$ into each other, both on the upper and the lower line respectively, see \cite[Lemma~2.4]{UEQGpTarragoWeber}. It is in~$\CC$ since we may apply \cite[Lemmas~1.3(a) and~1.1(b)]{TWcomb} on the following partition which is in~$\CC$:
\begin{gather*}\paarpartwb^{\nest(b)}\otimes \baarpartwb^{\nest(b)}\otimes (\paarpartww\otimes\baarpartww)^{\otimes b}.\end{gather*}
We then put $q:=q_0\otimes q_1\in\Proj^{\white}_{\CC}(a+2b)$ and $r:=\paarpartww^{\nest(a+b)}$. We have $r\in\CC^{\white}(0,2(a+b))$, see for instance Step 3 in the proof of \cite[Theorem~4.13]{UEQGpTarragoWeber}.

If now $s\in\Proj^{\white}_{\CC}(a)$ is a projective partition with $s\prec p$, then $(s\otimes q)r\neq (p\otimes q)r$ by \cite[Lemma~2.23]{FreslonWeber13}. Hence $T_{(s\otimes q)r}$ is linearly independent from $T_{(p\otimes q)r}$ by \cite[Lemma~4.16]{FreslonWeber13}. Likewise $T_{(p\otimes t)r}$ is linearly independent from $T_{(p\otimes q)r}$ for $t\prec q$, $t\in\Proj^{\white}_{\CC}(a+2b)$. This proves that $(T_p\otimes T_q)T_r$ is linearly independent from $(R_p\otimes T_q)T_r$, $(T_p\otimes R_q)T_r$ and $(R_p\otimes R_q)T_r$. Thus, $(P_p\otimes P_q)T_r\neq 0$.

(c) Let $p\in\Proj^{\white}_{\CC}(a)$ be a projective partition. Using the through-block decomposition of~$p$, we may bring it into the following form (see Proposition~2.9 and the remarks after Proposition~2.12 in~\cite{FreslonWeber13}):
\begin{gather*}p=p_u^*p_u,\qquad p_u=s_0\otimes t_1\otimes s_1\otimes\cdots \otimes t_l\otimes s_l.\end{gather*}
Here, $s_i$ are partitions with no upper points while each $t_i$ has exactly one upper point. Let $\alpha_i$ be the sum of the number of points of $s_i$ and $t_i$, with $\alpha_1$ being the sum of the number of points of~$s_0$,~$t_1$ and $s_1$. Let $\beta_i$ be numbers such that $\alpha_i+\beta_i$ is a multiple of $k(\CC)$, for all $i$. Let $q$ be the partition
\begin{gather*}q:=q_1\otimes\cdots\otimes q_l,\end{gather*}
where each $q_i$ is the partition consisting of a single block on $\beta_i$ upper white points and $\beta_i$ lower white points. Since $\vierpartwwbb\in\CC$, we have $q\in\CC$, applying \cite[Lemma~1.3(c)]{TWcomb} on $\idpartww^{\otimes \beta_i}$. Let $r\in\CC^{\white}(0,\sum_i(\alpha_i+\beta_i))$ be the partition obtained from nesting $l$ blocks $b_i$ into each other, each of size $\alpha_i+\beta_i$, such that block $b_{i+1}$ has $\alpha_i$ legs of $b_i$ to its left and $\beta_i$ legs of~$b_i$ to its right. We then conclude $(P_p\otimes P_q)T_r\neq 0$ similarly to~(b).

(d) Put $N:=1$ and $k_0:=k(\CC) > 0$. We f\/ind a partition $p \in \CC$ such that $c(p) = k_0$, by def\/inition of $k(\CC)$. Using rotation and \cite[Lemma~2.6(d)]{TWcomb}, we may assume that $p$ consists only of lower points, hence $p \in P^{\twocol}(0, m)$ for some $m \geqslant k_0$. Then $m = c_\circ(p)+c_\bullet(p)$ and $k_0 = c(p) = c_\circ(p)-c_\bullet(p)$, thus $p$ has $\frac{m+k_0}{2}$ white points and $\frac{m-k_0}{2}$ black points.

Using \cite[Lemma~1.1(b)]{TWcomb}, we may erase $\frac{m-k_0}{2}$ pairs of a~white and a~black point and we obtain a partition $p_0 \in \CC^{\white}(0, k_0)$ on $k_0$ white points. Put $r:=\idpartww\otimes p_0^*\in \CC^{\white}(1+k_0,1)$. It satisf\/ies $rr^*=\idpartww^{\otimes N}$ for $N = 1$.

Moreover, $\CC^{\white}(l,l+t)=\varnothing$ for $0<t<k(\CC)$, since if we had $p \in \CC^{\white}(l,l+t)$, then $c(p) = t$, but $c(p)$ is an integer multiple of $k(\CC)$ by \cite[Proposition~2.7]{TWcomb}.
\end{proof}

\begin{ex}\label{Ex:CP}\quad
\begin{itemize}\itemsep=0pt
\item[(a)] If $\CQG$ is free orthogonal easy ($u=\bar u$) with category $\CC\subset NC^{\twocol}$, then $k(\CC)=1$ if $\singletonw\in\CC$ and $k(\CC)=2$ otherwise, see \cite[Section~7]{TWcomb}. Moreover, as $\idpartwb$ is in $\CC$, we also have $\paarpartww\otimes\paarpartbb\in\CC$. Hence, conditions \ref{Cond:ContraPart} and \ref{Cond:EntrelacPart} hold for $S_n^+$, $O_n^+$ and the other f\/ive free orthogonal easy quantum groups.
\item[(b)] For $U_n^+$, we have $k(\CC)=0$, thus conditions~\ref{Cond:ContraPart} and~\ref{Cond:EntrelacPart} are violated.
\item[(c)]
The quantum ref\/lection groups $H_n^{s+}$ of Example~\ref{Ex:WangEtc} have the parameter $k(\CC)=s$ and $\vierpartwwbb\in\CC$, thus conditions~\ref{Cond:ContraPart} and~\ref{Cond:EntrelacPart} hold.
\end{itemize}
\end{ex}

Theorem~\ref{Thm:CharactCP} is about as far as we can go for free easy quantum groups in general: indeed, if Theorem~\ref{Thm:CharactCP} together with Proposition~\ref{Prop:PrevResults} provide a way to prove that some f\/ixed point algebras are Kirchberg algebras, the exact identif\/ication of the f\/ixed point algebra requires a~computation of $K$-theory that can only be performed for def\/inite fusion rules. For this reason, we focus on examples in the rest of the paper.

\subsection{Free actions}

In this subsection, we investigate the relation between our f\/ixed point construction and free actions. Free actions appear in articles such as \cite{FreeActBCH,NCjoinDHW, FreeActCAlgCY}. However, there are not that many concrete examples available, and for this reason we prove that our construction generates new examples.

We remind the reader of the following Def\/inition 2.4 of \cite{PpalActionEllwood} (see also~\cite{FreeActCAlgCY}):
\begin{Definition}Given a CQG $\CQG $, an action $\actionG \colon A \to A \otimes C(\CQG )$ on a $C^*$-algebra $A$ is called \emph{free} if $ (A \otimes 1) \actionG(A)$ is dense in $A \otimes C(\CQG )$.
\end{Definition}

In order to discuss more easily the above def\/inition, we follow the terminology and notations of \cite{FreeActBCH,NCjoinDHW,PPpalComodHKMZ} and introduce the \emph{canonical map} $\can \colon A \otimes A \to A \otimes C(\CQG )$ given by $\can(a \otimes a') := (a \otimes 1) \actionG(a')$. The condition above is therefore that the map $\can$ has a dense image.

\begin{Remark}The ($C^*$-)freeness property def\/ined above has strong ties with the notion of \emph{Hopf--Galois extension} (f\/irst def\/ined in~\cite{HGExtensionKT}~-- see for instance~\cite{SurveyHopfGaloisMontgomery}). Indeed, Woronowicz proved~\cite{CpctQGpWoronowicz} that any CQG contains a canonical dense Hopf $*$-algebra $\mathcal{O}(\CQG )$. Using the action of~$\CQG$ on~$A$, we can in turn def\/ine the \emph{Peter--Weyl subalgebra} $\mathcal{P}_\CQG(A)$ of $A$ (see, e.g.,~\cite{FreeActBCH})~-- which is not a $C^*$-algebra but just a $*$-algebra. The preprint \cite[p.~3]{FreeActBCH} proves in its Theorem~0.4 that the action $\actionG$ is ($C^*$-)free if and only if $\mathcal{P}_\CQG(A)$ is a Hopf--Galois extension over its f\/ixed point algebra~-- compare the \emph{Peter--Weyl--Galois condition} of \cite[Def\/inition~0.2, p.~3]{FreeActBCH} with Def\/inition~2.2 of Hopf--Galois extensions in \cite[p.~372]{SurveyHopfGaloisMontgomery}.
\end{Remark}

Going back to our initial motivations concerning free actions, we prove:
\begin{prop}\label{PropA}
Let $\CQG $ be a compact matrix quantum group and $\brep $ its fundamental representation. If $\CQG $ satisfies condition~{\rm \ref{Cond:Contra}} then the action $\actionG$ induced from Proposition~{\rm \ref{Prop:Action}} is free.
\end{prop}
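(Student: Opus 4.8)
The plan is to prove freeness in the form the excerpt isolates it: that the canonical map $\can\colon\CAlg_n\otimes\CAlg_n\to\CAlg_n\otimes C(\CQG)$, $\can(a\otimes a')=(a\otimes 1)\alpha(a')$, has dense image, where $\alpha$ is the action of Proposition~\ref{Prop:Action}. First I would reduce this to a statement about $C(\CQG)$ alone by two elementary observations. (i) $\can(\CAlg_n\otimes\CAlg_n)$ is a left $(\CAlg_n\otimes 1)$-module, since $(b\otimes 1)\can(a\otimes a')=\can(ba\otimes a')$; hence so is its closure. (ii) Using the Cuntz relations $S_i^*S_j=\delta_{ij}1$ one computes, for any two multi-indices of equal length $m\geq 0$,
\begin{align*}
\can\big((S_{k_1}\cdots S_{k_m})^*\otimes S_{l_1}\cdots S_{l_m}\big)
&=\big(S_{k_m}^*\cdots S_{k_1}^*\otimes 1\big)\sum_{j_1,\dots,j_m}S_{j_1}\cdots S_{j_m}\otimes u_{j_1l_1}\cdots u_{j_ml_m}\\
&=1\otimes u_{k_1l_1}\cdots u_{k_ml_m},
\end{align*}
and $\can(1\otimes 1)=\alpha(1)=1\otimes 1$. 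Hence $1\otimes\mathcal B\subseteq\can(\CAlg_n\otimes\CAlg_n)$, where $\mathcal B\subseteq C(\CQG)$ is the unital subalgebra generated by the $u_{ij}$, i.e.\ the linear span of all products $u_{i_1j_1}\cdots u_{i_mj_m}$. Combining (i) and (ii), $\overline{\can(\CAlg_n\otimes\CAlg_n)}$ contains $\overline{(\CAlg_n\otimes 1)(1\otimes\mathcal B)}=\CAlg_n\otimes\overline{\mathcal B}$, so it suffices to show that $\mathcal B$ is dense in $C(\CQG)$.

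Since $C(\CQG)=C^*(\{u_{ij}\})$ and $\mathcal B$ contains the generators, density will follow once $\mathcal B$ is shown to be $*$-closed. For this I would work inside the dense Hopf $*$-algebra $\mathcal O(\CQG)$: the linear span of the matrix coefficients of $\brep^{\otimes m}$ is exactly the span of the length-$m$ products of the $u_{ij}$, so, decomposing each $\brep^{\otimes m}$ into irreducibles, $\mathcal B=\sum_{v\in\CatInd}\mathcal C(v)$, where $\mathcal C(v)$ denotes the coefficient space of $v$. By the standard rules $\mathcal C(v)^*=\mathcal C(\bar v)$ and the linear independence of the coefficient spaces of pairwise inequivalent irreducibles, $\mathcal B$ is $*$-closed if and only if $\CatInd$ is stable under taking conjugates.

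This last point is where condition~\ref{Cond:Contra} enters, and it is the only step that is not purely formal. Let $v\in\CatInd$; by definition of $\CatInd$ it is irreducible, and so is its conjugate $\bar v$. Condition~\ref{Cond:Contra} yields $v'\in\CatInd$ with $\Hom(\TrivRep,v\otimes v')\neq 0$; by Frobenius reciprocity the multiplicity of $\TrivRep$ in $v\otimes v'$ equals the multiplicity of $\bar v$ in $v'$, so $\bar v$ is a subrepresentation of the irreducible $v'$, whence $\bar v\cong v'\in\CatInd$. Thus $\CatInd$ is stable under conjugation, $\mathcal B$ is a dense unital $*$-subalgebra of $C(\CQG)$, and the inclusions of the first paragraph give $\overline{\can(\CAlg_n\otimes\CAlg_n)}=\CAlg_n\otimes C(\CQG)$; that is, $\alpha$ is free. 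I expect the identification $\mathcal B=\sum_{v\in\CatInd}\mathcal C(v)$ together with the passage through Frobenius reciprocity to be the only slightly delicate points; everything else is a direct manipulation of the Cuntz relations and the module property of $\can$.
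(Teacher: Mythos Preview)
Your argument is correct and follows essentially the same route as the paper: both compute $\can$ on words in the Cuntz generators to land $1\otimes(\text{algebra generated by }u_{ij})$ in the image, use the left $(\CAlg_n\otimes 1)$-module property to upgrade this to $\CAlg_n\otimes\overline{\mathcal B}$, and then invoke~\ref{Cond:Contra} to show $\mathcal B$ is $*$-closed and hence dense. The only notable difference is in that last step: the paper reduces to the case where $\brep$ is irreducible and argues that its contragredient (whose coefficients are the $u_{ij}^*$) must occur in some $\brep^{\otimes k}$, whereas you work more structurally, identifying $\mathcal B=\sum_{v\in\CatInd}\mathcal C(v)$ and using Frobenius reciprocity to show directly that $\CatInd$ is closed under conjugation. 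Your version is a bit cleaner and avoids the slightly informal ``without loss of generality $\brep$ is irreducible'' step, but the underlying idea is the same.
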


\begin{Remark}At this point, it may seem plausible that condition~\ref{Cond:Contra} is equivalent to freeness of the action of $\CQG $ on $\CAlg_n$. However, as we have seen in \cite[Notation~3.1]{FixedPtGabriel}, \ref{Cond:Contra} is not satisf\/ied for the natural representation of $ \CQG ={\rm U}(1)$ (or multiple thereof). Using the same kind of argument as in the proof below, it appears that the action of~${\rm U}(1)$ on $\CAlg_2$ is free.

Of course, this action is not strictly induced from ${\rm U}(1)$ by Proposition~\ref{Prop:Action}. But we can build on this example by considering ${\rm SU}(2) \times {\rm U}(1)$. This is a CMQG which does not satisfy condition~\ref{Cond:Contra}, but which still acts freely on~$\CAlg_2$.
\end{Remark}

\begin{proof}[Proof of Proposition~\ref{PropA}] For our proof, we will consider the set
\begin{gather*}
\Ss := \left\{ s \in C(\CQG ) \,|\, \exists\, k_m \in \N_0, \; v_m, w_m \in \Hh^{\otimes k_m},\; \can\left( \sum_m^\text{f\/inite} v_m^* \otimes w_m\right) = 1 \otimes s \right\}.
\end{gather*}
 It is clear from the def\/inition of $\actionG$ that for any $i$, $j$,
\begin{gather*}
\can( S_j^* \otimes S_i ) = (S_j^* \otimes 1) \left( \sum_{k=1}^n S_k \otimes \brep _{ki} \right) = 1 \otimes \brep_{ji}.
\end{gather*}
This means that $\Ss$ contains all $\brep_{ji}$. Now, if $v_{i,m}$, $w_{i,m}$ are elements in $\Hh^{\otimes k_i}$ for $i=1,2$ and such that $\can\big(\sum_mv_{i,m}^* \otimes w_{i,m}\big) = 1 \otimes s_i$, then
\begin{gather*}
\can\bigg(\sum_{m,l}v_{2,m}^* v_{1,l}^* \otimes w_{1,l} w_{2,m}\bigg)
= \bigg(\sum_{m,l}v_{2,m}^* v_{1,l}^* \otimes 1\bigg) \actionG(w_{1,l} w_{2,m})\\
\hphantom{\can\bigg(\sum_{m,l}v_{2,m}^* v_{1,l}^* \otimes w_{1,l} w_{2,m}\bigg)}{}
= \bigg(\sum_mv_{2,m}^* \otimes 1\bigg) \bigg(\sum_lv_{1,l}^* \otimes 1\bigg) \actionG(w_{1,l}) \actionG(w_{2,m})\\
\hphantom{\can\bigg(\sum_{m,l}v_{2,m}^* v_{1,l}^* \otimes w_{1,l} w_{2,m}\bigg)}{}
 = \bigg(\sum_mv_{2,m}^* \otimes 1\bigg)(1 \otimes s_1) \actionG(w_{2,m}) \\
\hphantom{\can\bigg(\sum_{m,l}v_{2,m}^* v_{1,l}^* \otimes w_{1,l} w_{2,m}\bigg)}{}
= (1 \otimes s_1) \bigg(\sum_mv_{2,m}^* \otimes 1\bigg) \actionG(w_{2,m})\\
\hphantom{\can\bigg(\sum_{m,l}v_{2,m}^* v_{1,l}^* \otimes w_{1,l} w_{2,m}\bigg)}{}
= (1 \otimes s_1)(1 \otimes s_2) = 1 \otimes s_1 s_2.
\end{gather*}

This in turn proves that $\Ss$ is stable under multiplication. It follows that $\Ss$ is an algebra, which contains all polynomials in $\brep_{ij}$.

If the generators of $\CQG $ are selfadjoint, the polynomials in $\brep_{ij}$ coincide with the $*$-polynomials in $\brep_{ij}$ and they are all contained in $\Ss$, thus $\Ss$ is dense in $C(\CQG )$.

\looseness=-1
In general, we can always decompose $\brep$ into irreducible representations, which all correspond to unitary matrices with coef\/f\/icients in $C(\CQG )$. Up to a conjugation by a \emph{scalar-valued} unitary matrix, the algebra generated by the entries of $\brep$ and their adjoints is the same as the algebra generated by the irreducible components and their adjoints. So without loss of generality, we assume that $\brep$ is an irreducible representation. It is well known (see, e.g., \cite[Def\/inition~1.3.8, p.~11]{CQGRC-NT}) that the contragredient of $(\brep_{ij})$ is $(\brep_{ij}^*)$ in a suitable basis. $\brep$ is irreducible, so condition \ref{Cond:Contra} implies that up to equivalence, we can recover the coef\/f\/icients $(\brep_{ij}^*)$ through a subrepresentation of a~high enough power $\Hh^{\otimes k}$. It follows that $\Ss$ contains the $*$-polynomials in $\brep_{ij}$ and thus is dense in~$C(\CQG )$.

Given any $T \in \CAlg_n$, for any $\can\big(\sum v_j^* \otimes w_j\big) = 1 \otimes s$ we have
\begin{gather*} \can\bigg(\sum T v_j^* \otimes w_j\bigg) = (T \otimes 1) \can\bigg(\sum v_j^* \otimes w_j\bigg) = T \otimes s.\end{gather*}
It follows that the image of $\can$ is dense in $\CAlg_n \otimes C(\CQG )$ and thus the action $\actionG$ is free.
\end{proof}

\begin{Remark}On the one hand, it follows from the proof above that assumption~\ref{Cond:Contra} is not needed for Proposition~\ref{PropA} in the orthogonal case (i.e., if all the entries of the fundamental representation are selfadjoint). On the other hand, the argument above also shows that in this orthogonal case condition~\ref{Cond:Contra} is automatically satisf\/ied.
\end{Remark}

\begin{thm}
The actions of $S_n^+$, $O_n^+$ and $H_n^{s+}$ are free.
\end{thm}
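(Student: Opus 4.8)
The plan is to deduce this from Proposition~\ref{PropA} together with the verification of condition~\ref{Cond:Contra} for each of the three quantum groups, which has essentially already been carried out in the preceding sections.

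First I would dispose of $O_n^+$ and $S_n^+$. Both are free \emph{orthogonal} easy quantum groups, so every entry $\brep_{ij}$ of the fundamental representation is selfadjoint. As noted in the remark following Proposition~\ref{PropA}, in this situation the set $\Ss$ appearing in that proof already contains all $*$-polynomials in the $\brep_{ij}$ and is therefore dense in $C(\CQG)$; equivalently, condition~\ref{Cond:Contra} is automatically satisfied (see also Example~\ref{Ex:CP}(a), which gives~\ref{Cond:ContraPart} and~\ref{Cond:EntrelacPart}, whence~\ref{Cond:Contra} via Theorem~\ref{Prop:Reform}). Hence Proposition~\ref{PropA} applies verbatim and the actions of $O_n^+$ and $S_n^+$ on $\CAlg_n$ are free.

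For $H_n^{s+}$ the fundamental representation is genuinely non-selfadjoint, so the shortcut above is unavailable and one really needs condition~\ref{Cond:Contra}. By Example~\ref{Ex:CP}(c) the associated category $\CC\subset NC^{\twocol}$ satisfies $k(\CC)=s\neq 0$ and contains $\vierpartwwbb$, so Theorem~\ref{Thm:CharactCP}(c),(d) yields conditions~\ref{Cond:ContraPart} and~\ref{Cond:EntrelacPart}; Theorem~\ref{Prop:Reform} then upgrades these to~\ref{Cond:Contra} (and~\ref{Cond:Entrelac}, though only~\ref{Cond:Contra} is used here). Proposition~\ref{PropA} now gives that the action of $H_n^{s+}$ on $\CAlg_n$ is free, completing the argument. (Since $S_n^+=H_n^{1+}$, this also re-covers the $S_n^+$ case.)

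The hard part is not in this proof at all: all the delicate combinatorics — in particular the colour-bookkeeping needed to construct the projective partition $q\in\Proj^{\white}_{\CC}$ and the partition $r\in\CC^{\white}$ witnessing~\ref{Cond:ContraPart} for $H_n^{s+}$ — was already done in the proof of Theorem~\ref{Thm:CharactCP}. The only routine point to verify is that the integer $n$ governing the Cuntz algebra $\CAlg_n$ coincides with the matrix size of the fundamental representation in each of these examples, so that Proposition~\ref{Prop:Action}, and therefore Proposition~\ref{PropA}, genuinely applies.
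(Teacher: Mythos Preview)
Your proposal is correct and follows exactly the route the paper intends: the theorem is stated without proof precisely because it is an immediate consequence of Proposition~\ref{PropA} once condition~\ref{Cond:Contra} has been checked, and Example~\ref{Ex:CP}(a),(c) (via Theorems~\ref{Prop:Reform} and~\ref{Thm:CharactCP}) records exactly those verifications. Your write-up is in fact more explicit than the paper, which leaves the reader to assemble these ingredients.
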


\section[Examples -- $O_n^+$ and $S_n^+$]{Examples -- $\boldsymbol{O_n^+}$ and $\boldsymbol{S_n^+}$}\label{Sec:Examples}

To illustrate our previous results, we consider three cases, namely those of $O_n^+$, $S_n^+$ and of the quantum ref\/lection groups $H_n^{s+}$ -- which we are going to consider separately in the next section.

We start with the case of $O_n^+$. This free easy quantum group was introduced by S.~Wang in~\cite{QSymmGpFiniteSpWang} and its fusion rules were studied by Banica in~\cite{RepCQGOnBanica}. It appears that it shares the same fusion rules as ${\rm SU}(2)$, i.e., its irreducible representations are denoted by~$u_{k}$ for $k \in \N_0$~-- $u_0$ being the trivial representation~-- and the tensor products decompose into
\begin{gather}\label{Eqn:ClebschGordan}
u_k \otimes u_l = u_{|k-l|} \oplus u_{|k-l|+2} \oplus \cdots \oplus u_{k+l}.
\end{gather}
It is clear from the statement of conditions \ref{Cond:Contra} and \ref{Cond:Entrelac} that they only depend on the fusion rules of the quantum group and not on the quantum group itself. This provides an elementary way to recover the result of Example~\ref{Ex:CP}(a) in this case. In~\cite{FixedPtGabriel}, the case of ${\rm SU}_q(2)$ was studied in detail (see Section~7.1, p.~17, therein). The computation of the $K$-theory of the f\/ixed point algebra also depends only on the fusion rules. Since the fundamental representation of $O_n^+$ corresponds to the natural representation of ${\rm SU}(2)$, the proof of Proposition~7.10 \cite[p.~1031]{FixedPtGabriel} applies \emph{verbatim} and we get Theorem~\ref{Thm:A} below for $\CQG = O_n^+$.

The case of $S_n^+$ is very similar: this CMQG was introduced by S.~Wang in~\cite{FreeProdCQGWang} and its fusion rules were computed by Banica in~\cite{SymmCoactionBanica}. The fusion rules are the same as those of ${\rm SO}(3)$, i.e., we take the fusion rules~\eqref{Eqn:ClebschGordan} of ${\rm SU}(2)$ but consider only \emph{even} representations~$u_{2k}$, $k \in \N_0$. By Example~\ref{Ex:CP}(a), conditions~\ref{Cond:Contra} and~\ref{Cond:Entrelac} are satisf\/ied. The fundamental representation~$u$ of~$S_n^+$ decomposes into $u = u_0 \oplus u_{2} = u_1 \otimes u_1$ and this shows that the proof of Proposition~7.10 of~\cite{FixedPtGabriel} applies again:
\begin{thm}\label{Thm:A}
For $\CQG = O_n^+$ and $\CQG =S_n^+$, the fixed point algebra $\Falg$ obtained from Proposition~{\rm \ref{Prop:Action}} via the fundamental representation~$u$ is a Kirchberg algebra in the UCT class $\UCTclass$ whose $K$-theory is
\begin{gather*}
K_0(\Falg ) = \Z, \qquad K_1(\Falg ) = 0.
\end{gather*}
Moreover, $[1_{\Falg} ]_0 = 1$ and therefore $\Falg$ is $C^*$-isomorphic to the infinite Cuntz algebra~$\CAlg_\infty $.
\end{thm}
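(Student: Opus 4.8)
The plan is to split Theorem~\ref{Thm:A} into two essentially independent tasks. First, I would show that $\Falg$ is a Kirchberg algebra in the class $\UCTclass$; this is almost immediate from the machinery already assembled in Section~\ref{Sec:ActionsCuntzAlg}. Second, I would compute $K_0(\Falg)$, $K_1(\Falg)$ and the class $[1_{\Falg}]_0$ by transporting to $O_n^+$ and $S_n^+$ the $K$-theory computation carried out for ${\rm SU}_q(2)$ in \cite{FixedPtGabriel}, using that this computation depends only on the fusion rules together with the position of the fundamental representation; the $C^*$-isomorphism with $\CAlg_\infty$ then follows from the Kirchberg--Phillips classification.

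For the first task: both $O_n^+$ and $S_n^+$ are free orthogonal easy quantum groups, so by Example~\ref{Ex:CP}(a) their categories of partitions satisfy conditions~\ref{Cond:ContraPart} and~\ref{Cond:EntrelacPart}. By Theorem~\ref{Prop:Reform} this yields conditions~\ref{Cond:Contra} and~\ref{Cond:Entrelac}, so Proposition~\ref{Prop:PrevResults} applies and $\Falg = \CAlg^\alpha$ is purely infinite, simple, separable, unital, nuclear and satisfies the UCT. (For $O_n^+$ one may alternatively bypass the partition calculus and read~\ref{Cond:Contra},~\ref{Cond:Entrelac} off the ${\rm SU}(2)$ fusion rules~\eqref{Eqn:ClebschGordan} directly, exactly as was done for ${\rm SU}_q(2)$ in \cite{FixedPtGabriel}.)

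For the second task: recall from \cite{FixedPtGabriel} that the $K$-groups of $\CAlg^\alpha$ and the class of its unit are extracted from a chain complex whose input data depend only on $\CatInd$ and on how the tensor powers $\brep^{\otimes l}$ decompose, i.e.\ only on the fusion rules of $\CQG$ and the position of $\brep$ in the representation semiring. For $\CQG = O_n^+$ the fusion rules are~\eqref{Eqn:ClebschGordan} with $\brep = u_1$ the (irreducible) fundamental representation, which is precisely the situation of Section~7.1 of \cite{FixedPtGabriel}; hence Proposition~7.10 there applies verbatim and gives $K_0(\Falg) = \Z$, $K_1(\Falg) = 0$ and $[1_{\Falg}]_0 = 1$. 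For $\CQG = S_n^+$ the fusion rules are those of ${\rm SO}(3)$, i.e.\ the sub-semiring of~\eqref{Eqn:ClebschGordan} spanned by the even representations $u_{2k}$, and now $\brep = u_0 \oplus u_2 = u_1 \otimes u_1$. I would then check that, under the substitution $\brep \leftrightarrow u_1^{\otimes 2}$, the directed system underlying the computation of \cite{FixedPtGabriel} for $(S_n^+,\brep)$ is cofinal in the one for $(O_n^+,u_1)$ restricted to even tensor powers, so that the two computations have the same outcome; this gives the same $K$-data for $S_n^+$. Finally, since $\CAlg_\infty$ is a unital Kirchberg algebra in $\UCTclass$ with $K_0 \cong \Z$ generated by $[1]_0$ and $K_1 = 0$, the Kirchberg--Phillips theorem \cite{ClassThmKirchberg,ClassThmKirchbergKP} forces $\Falg \cong \CAlg_\infty$.

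The step I expect to be the main obstacle is the $S_n^+$ reduction: it requires opening up the proof of Proposition~7.10 of \cite{FixedPtGabriel} and verifying that replacing the generating object $u_1$ of the ${\rm SU}(2)$ fusion category by $u_1^{\otimes 2}$, together with the corresponding restriction to the even subcategory, does not change the relevant chain complex, and hence changes neither its homology nor the class of the unit. Everything else is bookkeeping with results already in hand.
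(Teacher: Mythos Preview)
Your proposal is correct and follows the same route as the paper: verify \ref{Cond:Contra} and \ref{Cond:Entrelac} via Example~\ref{Ex:CP}(a) (or directly from the ${\rm SU}(2)$ fusion rules), invoke Proposition~7.10 of \cite{FixedPtGabriel} verbatim for $O_n^+$, and for $S_n^+$ use $u = u_0 \oplus u_2 = u_1 \otimes u_1$ to reduce to the same computation. The only difference is one of emphasis: you flag the $S_n^+$ reduction as the main obstacle, whereas the paper dispatches it in one line. In fact the step is easier than you suggest: since $u^{\otimes k} = (u_1)^{\otimes 2k}$, the inductive systems $(R_\ell,\psi_\ell,\varphi_\ell)$ for $(S_n^+,u)$ and for $(O_n^+,u_1)$ are not merely cofinal but literally identical after the reindexing $\ell \leftrightarrow 2\ell$ (note $k_0 = 1$ for $S_n^+$ and $k_0 = 2$ for $O_n^+$, so both connecting maps are multiplication by $u_1^{\otimes 2}$ and both $\varphi$'s are multiplication by $u_1^{\otimes 2}-1$); no opening up of Proposition~7.10 is needed.
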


\section{Examples -- quantum ref\/lection groups}\label{Sec:QRefGp}

As mentioned in Example~\ref{Ex:WangEtc}(d), the \emph{quantum reflection groups} $H_n^{s+}$ were studied by Ba\-ni\-ca, Belinschi, Capitaine and Collins in~\cite{BBCC}. Their fusion rules were computed by Banica and Vergnioux in their article~\cite{FusionRulesQRefBV}~-- see in particular Theorem~7.3, p.~348, therein. We follow the notations used in their article up to a point: for a reason that will be clear later on, we write~$r_s$ for the representation denoted by~$r_0$ in the original article. For the reader's convenience, we reproduce here the fusion rules of these quantum groups, as described in \cite[Theorem~7.3]{FusionRulesQRefBV}. The monoid $F = \langle \Z/s\Z \rangle $ of words over~$\Z/s\Z$ is equipped with an involution and a fusion ope\-ration:
\begin{enumerate}[label=(\arabic*)]\itemsep=0pt
\item
Involution: $(i_1 \ldots i_k)\,\bar{} = (-i_k) \cdots (-i_1)$.
\item
Fusion: $(i_1 \ldots i_k) \cdot (j_1 \ldots j_l) = i_1 \ldots i_{k-1} (i_k + j_1) j_2 \ldots j_l$.
\end{enumerate}
\newpage

\noindent
Using these relations, the fusion rules can be written
\begin{gather}\label{Eqn:FusionRules}
r_{x} \otimes r_{y} = \sum_{x = v z,\, y = \bar{z} w} r_{v w} + r_{v \cdot w},
\end{gather}
where $v \cdot w$ is not def\/ined when $v$ or $w$ is the empty word. In the case $z= \varnothing$ (``leading terms of the fusion rule''), we distinguish between the term $r_{x y}$ -- that we call the \emph{concatenation term} -- and the term $r_{x \cdot y}$ that we call the \emph{product term}. We are going to compute the $K$-theory of the f\/ixed point algebra generated from the natural representation (indexed by $r_1$).

In the lemma below, we gather useful results:
\begin{lem}
\label{Lem:TotR1}
For any quantum reflection group $\CQG = H_n^{s+}$,
\begin{itemize}\itemsep=0pt
\item
for all $0 <\ell \leqslant s$, the representation $r_{\ell}$ appears in $(r_1)^{\ell}$, i.e., $r_\ell \leqslant (r_1)^\ell$;
\item
moreover, $r_{s-1}$ and $r_1$ are contragredient to one another, i.e., $1 \leqslant (r_1)^s$;
\item
all irreducible representations of $\CQG $ appear as irreducible components of some $r_1^{\ell}$ for $\ell$ large enough. Actually,
\begin{gather}
\label{Eqn:EstimDeg}
r_{\sigma_1 \ldots \sigma_k} \leqslant (r_1)^{\sigma_1 + \cdots + \sigma_k}.
\end{gather}
\end{itemize}
\end{lem}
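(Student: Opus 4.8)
The plan is to work directly with the combinatorial description of the fusion rules in~\eqref{Eqn:FusionRules}, reading off the leading (concatenation) term of each tensor product. First I would establish the first bullet by induction on~$\ell$. For $\ell=1$ there is nothing to prove. Assuming $r_\ell\leqslant (r_1)^\ell$ for some $\ell<s$, I apply~\eqref{Eqn:FusionRules} to $r_1\otimes r_\ell$: taking $z=\varnothing$, $x=(1)$, $y=(\ell)$ gives the concatenation term $r_{1\ell}$ and, since neither word is empty, the product term $r_{(1)\cdot(\ell)}=r_{1+\ell}$ (here we use $\ell<s$, so $1+\ell$ is a single letter of $\Z/s\Z$ and no wrap-around or reduction occurs). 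Hence $r_{\ell+1}\leqslant r_1\otimes r_\ell\leqslant (r_1)^{\ell+1}$, completing the induction for $0<\ell\leqslant s$.

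For the second bullet, I take $\ell=s$ in the above and examine $r_1\otimes r_{s-1}$ once more via~\eqref{Eqn:FusionRules}: besides the concatenation term $r_{1(s-1)}$, the product term is $r_{(1)\cdot(s-1)}=r_{(1+(s-1))}=r_{(0)}$, and in the monoid $F=\langle\Z/s\Z\rangle$ the one-letter word $(0)$ is identified with the empty word, i.e.\ $r_{(0)}$ is the trivial representation (this is exactly the normalisation convention fixed in the paragraph preceding the lemma, where $r_s$ is the representation written $r_0$ in~\cite{FusionRulesQRefBV}). Since $r_{s-1}\leqslant (r_1)^{s-1}$ by the first bullet, tensoring with $r_1$ gives $1\leqslant (r_1)^s$; and the appearance of the trivial representation in $r_1\otimes r_{s-1}$ together with Frobenius reciprocity shows $r_{s-1}$ and $r_1$ are mutually contragredient. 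The involution formula $(1)\bar{}=(-1)=(s-1)$ confirms this independently.

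For the third bullet I argue by induction on the length $k$ of the word $\sigma_1\ldots\sigma_k$ (with $1\leqslant\sigma_i\leqslant s$; any letter may be normalised into this range using $r_s\cong r_0$ and the fact that letters live in $\Z/s\Z$). For $k=1$ this is the first bullet. For the inductive step, suppose $r_{\sigma_1\ldots\sigma_{k-1}}\leqslant (r_1)^{\sigma_1+\cdots+\sigma_{k-1}}$ and $r_{\sigma_k}\leqslant (r_1)^{\sigma_k}$. Applying~\eqref{Eqn:FusionRules} to $r_{\sigma_1\ldots\sigma_{k-1}}\otimes r_{\sigma_k}$ with $z=\varnothing$, the concatenation term is exactly $r_{\sigma_1\ldots\sigma_k}$; hence
\begin{gather*}
r_{\sigma_1\ldots\sigma_k}\leqslant r_{\sigma_1\ldots\sigma_{k-1}}\otimes r_{\sigma_k}\leqslant (r_1)^{\sigma_1+\cdots+\sigma_{k-1}}\otimes (r_1)^{\sigma_k}=(r_1)^{\sigma_1+\cdots+\sigma_k},
\end{gather*}
which is~\eqref{Eqn:EstimDeg}. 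Since every irreducible of $H_n^{s+}$ is labelled by some word in $F$, this shows every irreducible occurs in a sufficiently high tensor power of $r_1$.

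The only genuinely delicate point—the ``main obstacle''—is bookkeeping around the normalisation $r_s=r_0$ and the identification of the empty word with the trivial representation: one must be careful that when a product term produces a letter equal to $0$ (or $s$), it is reinterpreted correctly, and that letters are kept in the range $1,\ldots,s$ so that the inequalities on exponents make sense. Once the conventions from the paragraph preceding the lemma are pinned down, each step reduces to a one-line application of the fusion formula~\eqref{Eqn:FusionRules} and the involution rule, so no serious computation is involved.
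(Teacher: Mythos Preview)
Your overall strategy matches the paper's---induction on $\ell$ for the first point, concatenation terms for the third---and those parts are fine. But your argument for the second bullet contains a genuine misreading of the convention, exactly at the place you yourself flag as delicate.

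The one-letter word $(0)\in F=\langle\Z/s\Z\rangle$ is \emph{not} the empty word; it indexes a nontrivial irreducible representation (the one Banica--Vergnioux call $r_0$ and the present paper relabels $r_s$). The trivial representation is $r_\varnothing$. So when you compute the product term of $r_1\otimes r_{s-1}$ with $z=\varnothing$ and get $r_{(1)\cdot(s-1)}=r_{(0)}$, you have found $r_s$, not the trivial representation. The copy of $1$ in $r_1\otimes r_{s-1}$ actually comes from the $z=(1)$ summand of~\eqref{Eqn:FusionRules}: then $\bar z=(-1)=(s-1)$, $v=\varnothing$, $w=\varnothing$, and $r_{vw}=r_\varnothing=1$ (the product term $r_{v\cdot w}$ is undefined since both are empty). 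This is how the paper obtains it, phrased as the $z=\ell$ term in $r_\ell\otimes r_1$ for $\ell=s-1$.

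Your fallback sentence---that the involution formula $(1)\bar{}=(s-1)$ already shows $r_1$ and $r_{s-1}$ are contragredient, whence $1\leqslant r_1\otimes r_{s-1}\leqslant(r_1)^s$---is correct and suffices on its own; but the primary argument via the product term should be dropped or replaced by the $z\neq\varnothing$ computation above. The same confusion makes your remark ``no wrap-around or reduction occurs'' in the inductive step slightly off when $\ell=s-1$: wrap-around \emph{does} occur there, and the product term yields $r_{(0)}=r_s$, which is precisely what you need for the case $\ell+1=s$ of the first bullet.
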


\begin{Remark}\label{Rk:Cond:Contra}
It follows immediately from this lemma together with Woronowicz's abstract existence of a contragredient for any irreducible representation that condition \ref{Cond:Contra} is satisf\/ied for $H_n^{s+}$ and its representation $r_1$.
\end{Remark}

\begin{proof}[Proof of Lemma~\ref{Lem:TotR1}]
We prove the f\/irst point by induction on $\ell$: the result is true and obvious for $\ell = 1$. For $\ell = 2$,
\begin{itemize}\itemsep=0pt
\item if $s = 2$ then $r_1 \cdot r_1 = r_{11} + r_2 + 1$ and the result is true;
\item if $s > 2$, then $r_1 \cdot r_1 = r_{11} + r_2$, i.e., $r_2 \leqslant (r_1)^2$ and thus the property is true for $\ell = 2$.
\end{itemize}
Let us now assume the property for $\ell >1$, then (provided $\ell +1 < s$), $r_\ell \cdot r_1 = r_{\ell 1} + r_{\ell +1} \leqslant (r_1)^{\ell +1}$, which shows the result for $\ell +1$.

If $\ell + 1 = s$, then the product becomes
\begin{gather*}
r_{\ell} \cdot r_1 = r_{\ell 1} + r_{\ell +1} + 1.
\end{gather*}
The f\/irst two terms above correspond to $z= \varnothing$, while the third one corresponds to $z = \ell$, i.e., $\bar{z} = -\ell = 1$ (equality in $\Z/s\Z$).

Let $r_{\sigma_1 \ldots \sigma_k}$ be any irreducible representation in $H_n^{s+}$ (where all~$\sigma_j$ are taken between~$1$ and~$s$), then it appears as an irreducible component of $r_1^{\sigma_1 + \cdots + \sigma_k}$: our f\/irst point proved that $r_\sigma$ appears in $r_1^{\sigma}$. Therefore, in the decomposition into irreducible components of $r_1^{\sigma_1 + \cdots + \sigma_k} = (r_1)^{\sigma_1} \cdots (r_1)^{\sigma_k}$, there appears a product of~$r_{\sigma_1} \cdot r_{\sigma_2} \cdots r_{\sigma_k}$, which in turn produces a~copy of~$r_{\sigma_1 \ldots \sigma_k}$~-- by using iteratively only the f\/irst (concatenation) term of the fusion rules, for $z = \varnothing$.
\end{proof}

This lemma enables us to set:
\begin{Definition}[degree function for $H_n^{s+}$] Given an irreducible representation $p$ of $\CQG $, we denote by $\delta(p)$ the smallest integer $\ell$ such $p \leqslant r_1^{\ell}$.
\end{Definition}

We can actually give an explicit estimate of $\delta$:
\begin{prop}\label{Prop:DefDeg}
The degree of the irreducible representation $r_{\sigma_1 \ldots \sigma_k}$, where the $\sigma_j$s are chosen with $0 < \sigma_j \leqslant s$, is $\delta(r_{\sigma_1 \ldots \sigma_k}) = \sigma_1 + \cdots + \sigma_k$.
\end{prop}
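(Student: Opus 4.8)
The plan is to establish the two inequalities $\delta(r_{\sigma_1\ldots\sigma_k})\le \sigma_1+\cdots+\sigma_k$ and $\delta(r_{\sigma_1\ldots\sigma_k})\ge \sigma_1+\cdots+\sigma_k$ separately. The upper bound is immediate: it is exactly the estimate~\eqref{Eqn:EstimDeg} of Lemma~\ref{Lem:TotR1}, which states $r_{\sigma_1\ldots\sigma_k}\le (r_1)^{\sigma_1+\cdots+\sigma_k}$, so by the very definition of $\delta$ we are done. For the lower bound I would introduce a \emph{weight function} on the monoid $F=\langle\Z/s\Z\rangle$: to a word $w=(\sigma_1\ldots\sigma_k)$ assign $\|w\|:=\sum_{j=1}^k\sigma_j$, where each $\sigma_j$ is taken to be its representative in $\{1,2,\ldots,s\}$ (so $\|\varnothing\|=0$). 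The point of this particular choice of representatives is that $\|w\|\ge k\ge 1$ whenever $w\ne\varnothing$; moreover $\|\cdot\|$ is additive under concatenation and $\|(1)\|=1$. Note that representing letters in $\{0,\ldots,s-1\}$ would \emph{not} work, since $\delta(r_{(s)})=s\ne 0$.

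The key lemma to prove is that $\|\cdot\|$ is subadditive for the fusion rule: if $r_z\le r_x\otimes r_y$ then $\|z\|\le\|x\|+\|y\|$. By~\eqref{Eqn:FusionRules}, such an $r_z$ is either a concatenation term $r_{vw}$ with $x=vz_0$ and $y=\bar z_0 w$, or a product term $r_{v\cdot w}$ (with $v,w$ nonempty). In the first case $\|vw\|=\|v\|+\|w\|\le\|v\|+\|z_0\|+\|\bar z_0\|+\|w\|=\|x\|+\|y\|$. In the second case, writing $a$ and $b$ for the last letter of $v$ and the first letter of $w$, one has $\|v\cdot w\|=\|v\|+\|w\|+(\widetilde{a+b}-a-b)$, so it suffices to check $\widetilde{a+b}-a-b\le\|z_0\|+\|\bar z_0\|$. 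When $z_0=\varnothing$ this reduces to $\widetilde{a+b}\le a+b$, which holds because the representative in $\{1,\ldots,s\}$ of $a+b$ equals $a+b$ when $a+b\le s$ and equals $a+b-s$ when $a+b>s$. When $z_0\ne\varnothing$, one uses that $\tilde\imath+\widetilde{-\imath}\ge s$ for every letter $i$ (it equals $s$ unless $\tilde\imath=s$), so that $\|z_0\|+\|\bar z_0\|\ge s$; since $\widetilde{a+b}\le s$ and $a,b\ge 1$ give $\widetilde{a+b}-a-b\le s-2<s$, the inequality follows.

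Granting this lemma, a straightforward induction on $\ell$ shows that $r_w\le (r_1)^\ell$ forces $\|w\|\le\ell$: for $\ell=0$ we have $(r_1)^0=1$, so $r_w\le 1$ gives $w=\varnothing$; and if $r_w\le (r_1)^{\ell+1}=(r_1)^\ell\otimes r_1$, then $r_w\le r_x\otimes r_1$ for some $r_x\le (r_1)^\ell$, whence $\|w\|\le\|x\|+\|(1)\|\le\ell+1$ by the lemma and the induction hypothesis. Taking $\ell=\delta(r_{\sigma_1\ldots\sigma_k})$ yields $\sigma_1+\cdots+\sigma_k=\|(\sigma_1\ldots\sigma_k)\|\le\delta(r_{\sigma_1\ldots\sigma_k})$, and together with the upper bound this gives the claimed equality.

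The main obstacle is isolating the correct weight function, that is, choosing representatives in $\{1,\ldots,s\}$ rather than $\{0,\ldots,s-1\}$ so that nonempty words have positive weight, and then controlling the product term of the fusion rule: there two letters collapse into a single one, so the weight could a priori increase, and it is the presence of the cancelled block $z_0\bar z_0$ together with the elementary estimate $\tilde\imath+\widetilde{-\imath}\ge s$ that keeps things bounded.
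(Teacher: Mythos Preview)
Your proof is correct and follows essentially the same strategy as the paper: define the weight $\sigma(r_{x_1\ldots x_k})=x_1+\cdots+x_k$ (your $\|\cdot\|$), establish its subadditivity under the fusion rule~\eqref{Eqn:FusionRules}, and combine with~\eqref{Eqn:EstimDeg} to sandwich $\delta$ and $\sigma$. The paper dispatches the subadditivity step in one line (``by direct examination of~\eqref{Eqn:FusionRules}''), whereas you carry out the case analysis explicitly, including the delicate product-term case where the fused letter $\widetilde{a+b}$ could in principle exceed $a+b$; your observation that $\|z_0\|+\|\bar z_0\|\ge s$ whenever $z_0\ne\varnothing$ is exactly what makes this work, and is a useful detail the paper leaves implicit.
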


\begin{proof}Given any irreducible representation $r_{x_1 \ldots x_k}$ with for all $i$, $0 < x_i \leqslant s$, we def\/ine $\sigma(r_{x_1 \ldots x_k}) = x_1 + \cdots + x_k$. It follows by direct examination of \eqref{Eqn:FusionRules} that if $\gamma \leqslant \alpha \cdot \beta$, then
\begin{gather}\label{Eqn:IneqSigma}
\sigma(\gamma) \leqslant \sigma(\alpha) + \sigma(\beta).
\end{gather}
Iterating the argument and combining it with \eqref{Eqn:EstimDeg}, it appears that for all irreducible representation, $\gamma \leqslant (r_1)^{\sigma(\gamma)}$, i.e., $\delta(\gamma) \leqslant \sigma(\gamma)$.

Conversely, if $\gamma \leqslant (r_1)^\ell$, then iterating the ``subadditivity property'' \eqref{Eqn:IneqSigma} of $\sigma$ and using $\sigma(r_1) = 1$, we get: $\sigma(\gamma) \leqslant \ell$. Since this is valid for all $\ell$, we get $\sigma(\gamma) \leqslant \delta(\ell)$. This proves that $\delta(\gamma) = \sigma(\gamma)$ for all irreducible $\gamma$ and concludes the proof.
\end{proof}

\begin{lem} \label{Lem:AllIrred}
For all irreducible representations $\alpha =r_x$, $\beta =r_y$ and $\gamma$ with $\gamma \leqslant \alpha \cdot \beta$,
\begin{gather}\label{Eqn:IneqDeg}
\delta(\gamma) \leqslant \delta(\alpha) + \delta(\beta).
\end{gather}
For $\CQG = H_n^{s+}$, the cases of equality in~\eqref{Eqn:IneqDeg} can only occur for the terms $z = \varnothing$ of~\eqref{Eqn:FusionRules}. For those terms, the equality is true unconditionally for $r_{x y}$, and only if $x_k +y_1 \leqslant s$ for $r_{x \cdot y}$.
\end{lem}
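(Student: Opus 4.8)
The plan is to reduce everything to the identity $\delta=\sigma$ supplied by Proposition~\ref{Prop:DefDeg}, where $\sigma(r_{w_1\ldots w_m})=w_1+\cdots+w_m$ with the letters $w_i$ written in the canonical range $\{1,\ldots,s\}$, and then to inspect the fusion rule~\eqref{Eqn:FusionRules} summand by summand. The inequality~\eqref{Eqn:IneqDeg} is just~\eqref{Eqn:IneqSigma} transported through Proposition~\ref{Prop:DefDeg}, so the real content is locating the equality cases. The one elementary fact I will use repeatedly is that $\sigma$ is \emph{additive under concatenation} of words but only \emph{subadditive} under the fusion product $v\cdot w$: the defect equals $0$ when the merged letter $($last letter of $v$ plus first letter of $w)$ is at most $s$, and equals $s$ otherwise, since then the canonical representative modulo $s$ drops by exactly $s$.

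First I would fix $\alpha=r_x$ and $\beta=r_y$ with $x=(x_1,\ldots,x_k)$, $y=(y_1,\ldots,y_l)$ in canonical form, and dispose of the summands of~\eqref{Eqn:FusionRules} with $z\neq\varnothing$. Writing $x=vz$ and $y=\bar z w$ with $z$ nonempty, additivity of $\sigma$ under concatenation gives $\delta(\alpha)=\sigma(r_v)+\sigma(r_z)$ and $\delta(\beta)=\sigma(r_{\bar z})+\sigma(r_w)$; since every letter of $z$ (hence of $\bar z$) lies in $\{1,\ldots,s\}$, both $\sigma(r_z)$ and $\sigma(r_{\bar z})$ are at least $1$, so $\sigma(r_v)<\delta(\alpha)$ and $\sigma(r_w)<\delta(\beta)$. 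Hence $\delta(r_{vw})=\sigma(r_v)+\sigma(r_w)<\delta(\alpha)+\delta(\beta)$, and, whenever the product term is defined, $\delta(r_{v\cdot w})\leqslant\sigma(r_v)+\sigma(r_w)<\delta(\alpha)+\delta(\beta)$ by subadditivity. Thus every $z\neq\varnothing$ summand satisfies~\eqref{Eqn:IneqDeg} strictly, so none of them can realise equality.

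Next I would treat $z=\varnothing$, where $v=x$ and $w=y$. The concatenation term gives $\delta(r_{xy})=\sigma(r_x)+\sigma(r_y)=\delta(\alpha)+\delta(\beta)$ unconditionally. For the product term $r_{x\cdot y}$ (defined when $x,y$ are both nonempty) the word $x\cdot y$ is $(x_1,\ldots,x_{k-1},c,y_2,\ldots,y_l)$ with $c$ the canonical representative of $x_k+y_1$ modulo $s$; since $x_k,y_1\in\{1,\ldots,s\}$ one has $x_k+y_1\in\{2,\ldots,2s\}$, and $c=x_k+y_1$ exactly when $x_k+y_1\leqslant s$, while $c=x_k+y_1-s$ otherwise. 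Therefore $\delta(r_{x\cdot y})=\delta(\alpha)+\delta(\beta)$ if $x_k+y_1\leqslant s$ and $\delta(r_{x\cdot y})=\delta(\alpha)+\delta(\beta)-s$ if $x_k+y_1>s$. Combined with the previous paragraph this establishes~\eqref{Eqn:IneqDeg} for every irreducible summand and pins down the equality cases exactly as stated; I would close by remarking that $\delta$ is a well-defined invariant of irreducible representations (and in fact the $z\neq\varnothing$ words are strictly shorter than the $z=\varnothing$ ones), so a summand of maximal degree is unambiguously one of the two $z=\varnothing$ terms.

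I do not anticipate a genuine obstacle: the whole argument is careful bookkeeping of the convention $0\leftrightarrow s$ for letters. The point demanding attention is the boundary case $x_k+y_1=s$, where $c=s$ coincides with $x_k+y_1$ so equality still holds — which is why the condition is the non-strict $x_k+y_1\leqslant s$ rather than $x_k+y_1<s$; one should also keep the degenerate cases in mind, namely $\alpha$ or $\beta$ trivial (empty word), which are consistent with the statement since then $\delta=0$ and only the concatenation term survives.
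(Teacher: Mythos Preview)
Your proof is correct and follows essentially the same approach as the paper: both reduce to the explicit formula $\delta=\sigma$ from Proposition~\ref{Prop:DefDeg} and then inspect the summands of~\eqref{Eqn:FusionRules} case by case, showing that $z\neq\varnothing$ forces a strict drop in degree while for $z=\varnothing$ the concatenation term always attains equality and the product term does so precisely when $x_k+y_1\leqslant s$. The only cosmetic difference is that the paper derives the inequality~\eqref{Eqn:IneqDeg} from the inclusion $\gamma\leqslant (r_1)^{\delta(\alpha)+\delta(\beta)}$ rather than from the subadditivity~\eqref{Eqn:IneqSigma}, and phrases the $z\neq\varnothing$ case more tersely; your version is if anything more explicit about the bookkeeping, including the boundary case $x_k+y_1=s$.
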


\begin{Remark}
The above lemma is the reason why we use the notation $r_{s}$ instead of $r_0$.
\end{Remark}

\begin{proof}[Proof of Lemma~\ref{Lem:AllIrred}] The inequality \eqref{Eqn:IneqDeg} follows from
\begin{gather*}
\gamma \leqslant \alpha \cdot \beta \leqslant (r_1)^{\delta(\alpha)} (r_1)^{\delta(\beta)} = (r_1)^{\delta(\alpha) + \delta(\beta)}.
\end{gather*}
This is just a variation on the proof of Lemma~\ref{Lem:TotR1} above.

The equality requires to study the behavior of the total degree in the fusion rules, starting from two irreducible representations $\alpha = r_x = r_{x_1 \ldots x_k}$ and $\beta = r_y = r_{y_1 \ldots y_l}$ with f\/inite sequen\-ces~$(x_i)$ and~$(y_j)$ taking their values in $\{1, \ldots , s \}$.

If $\gamma$ arises from a term in~\eqref{Eqn:FusionRules} with $z \neq \varnothing$, then the inequality~\eqref{Eqn:IneqDeg} is strict. Indeed, $\gamma$ could then arise from the $z = \varnothing$ term of $v$ and $w$, where $x = v z$ and $y = \bar{z} w$.

Assuming now that $z = \varnothing$, using the estimate of the degree of Proposition~\ref{Prop:DefDeg}, the term $r_{x_1 \dots x_k y_1 \dots y_l}$ yields an equality case for~\eqref{Eqn:IneqDeg}. The same is true of the term $r_{x_1 \dots (x_k + y_1) \dots y_l}$ provided $x_k + y_1 \leqslant s$. It remains to treat the case of $x_k + y_1 > s$, but such a term corresponds to a strict inequality in~\eqref{Eqn:IneqDeg} and this completes the proof.
\end{proof}

We will use the following notations extensively: let $R_\ell$ (resp.\ $\partial R_\ell$) be the $\Z$-free module constructed on irreducible representations appearing in $(r_1)^\ell$ (resp.\ appearing in $(r_1)^\ell$ and not in any $(r_1)^k$ for $0 \leqslant k < \ell$).

It follows immediately from the def\/inition of $R_\ell$ that $R_\ell \cdot R_k \subseteq R_{\ell+k}$, where the product $R_\ell \cdot R_k$ is taking place in the fusion ring of $\CQG $.

\begin{lem}The fusion rules \eqref{Eqn:FusionRules} actually ensure that in $\Z/s \Z$:
\begin{gather}\label{Eqn:EgDeg}
[\delta(\gamma)] = [\delta(\alpha)] + [\delta(\beta)].
\end{gather}
\end{lem}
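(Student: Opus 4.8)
The statement refines the inequality~\eqref{Eqn:IneqDeg} of Lemma~\ref{Lem:AllIrred}: even when the degree drops upon passing to a subrepresentation $\gamma$ of $\alpha\otimes\beta$, the drop is a multiple of $s$. The cleanest way to see this is to note that $\gamma\mapsto[\delta(\gamma)]$ factors through the ``total sum of letters'' homomorphism on the word monoid $F=\langle\Z/s\Z\rangle$.

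The plan is as follows. First I would introduce the monoid homomorphism $\phi\colon F\to(\Z/s\Z,+)$ sending a word $i_1\ldots i_k$ to $i_1+\cdots+i_k$, the sum being taken in $\Z/s\Z$; it is a homomorphism because concatenation of words corresponds to addition of these sums. Since $\delta(r_{\sigma_1\ldots\sigma_k})=\sigma_1+\cdots+\sigma_k$ for representatives $\sigma_j\in\{1,\ldots,s\}$ (Proposition~\ref{Prop:DefDeg}), one gets $[\delta(r_w)]=\phi(w)$ for every word $w$, the choice of representatives being irrelevant after reducing mod~$s$. Next I would record two compatibilities of $\phi$ with the operations defining~\eqref{Eqn:FusionRules}: $\phi(\bar z)=-\phi(z)$, immediate from the involution $(i_1\ldots i_k)\,\bar{}=(-i_k)\cdots(-i_1)$; and $\phi(v\cdot w)=\phi(v)+\phi(w)$ whenever $v\cdot w=v_1\ldots v_{p-1}(v_p+w_1)w_2\ldots w_q$ is defined, since fusing $v_p$ and $w_1$ into the single letter $v_p+w_1\in\Z/s\Z$ leaves the total sum unchanged.

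The conclusion is then a one-line computation. If $\gamma$ appears in $r_x\otimes r_y$, then by~\eqref{Eqn:FusionRules} one may write $x=vz$ and $y=\bar z w$, with $\gamma$ equal to $r_{vw}$ or to $r_{v\cdot w}$. In either case $[\delta(\gamma)]=\phi(v)+\phi(w)$ --- by multiplicativity of $\phi$ for the concatenation term and by the second compatibility for the product term --- whereas $[\delta(\alpha)]+[\delta(\beta)]=\phi(vz)+\phi(\bar z w)=\phi(v)+\phi(z)+\phi(\bar z)+\phi(w)=\phi(v)+\phi(w)$ by the first compatibility. Comparing the two expressions yields~\eqref{Eqn:EgDeg}.

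There is no genuine obstacle; the only delicate point is bookkeeping with representatives. The letter $v_p+w_1$ produced by the product operation need not lie in $\{1,\ldots,s\}$ --- it may for instance be $\equiv 0$, i.e.\ the letter ``$s$'' --- so computing $\delta$ of the product term formally requires first rewriting $v\cdot w$ in canonical form; but this rewriting is invisible to $\phi$, which depends only on residues mod~$s$. This is exactly why~\eqref{Eqn:EgDeg} holds modulo $s$ rather than on the nose, consistently with the equality/strict-inequality dichotomy of Lemma~\ref{Lem:AllIrred}.
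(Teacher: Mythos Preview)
Your proof is correct and follows essentially the same approach as the paper: both arguments rest on the observation that the total sum of letters of a word, taken modulo~$s$, is preserved by concatenation, by the product operation $v\cdot w$, and by simultaneous removal of $z$ and~$\bar z$. The paper checks this by a direct case analysis ($z=\varnothing$ versus $z\neq\varnothing$), whereas you package the same computation via the monoid homomorphism~$\phi$; the content is identical.
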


\begin{proof}
Direct examination and Proposition~\ref{Prop:DefDeg} show that if $z = \varnothing$, then $r_{xy} = r_{i_1 \ldots i_k j_1 \ldots j_l}$ has degree $\delta(r_{xy}) = i_1 + \cdots + i_k + j_1 + \cdots + j_l = \delta(r_x) + \delta(r_y)$. The degree of $r_{x \cdot y}$ is the same in $\Z/s \Z$ (since a simplif\/ication in~$\Z/s\Z$ may happen in the fusion $x \cdot y$).

More generally, if $z \neq \varnothing$, then the def\/inition of the involution on the monoid $F$ ensures that taking out both $z$ and $\bar{z}$ do not change~$[ \delta(r_{v w})]$ in~$\Z/s \Z$. However, simplifying by $z$ and $\bar{z}$ lessen the total degree of the expression, i.e., the degree~$\delta(r_{vw})$ has to be strictly less than $\delta(r_x) + \delta(r_y)$. The same argument applies to~$r_{v+w}$.
\end{proof}

We are now in position to compute the \emph{chain group} $\ChainGp(\CQG )$ as introduced in \cite{HilbertCSystBaumgLledo,CenterCpctGpMueger}. This object is also known as \emph{universal grading group}, see, e.g.,~\cite{NilpotFusionCatGN}.
\begin{prop}\label{Prop:IddChainGp}
The chain group of $\CQG = H_n^{s+}$ is $\ChainGp(\CQG ) =\Z/s \Z$.
\end{prop}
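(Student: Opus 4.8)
The plan is to realise $\ChainGp(\CQG)$ as a \emph{cyclic} group generated by the class $[r_1]$ of the fundamental representation, and then to use the degree function $\delta$ to identify this cyclic group with $\Z/s\Z$. Recall that $\ChainGp(\CQG)$ is the group with one generator $[p]$ for each irreducible representation $p$ of $\CQG$, subject to the relation $[\gamma] = [\alpha] + [\beta]$ whenever $\gamma \leqslant \alpha \otimes \beta$. In particular $1 \leqslant 1 \otimes 1$ forces $[1] = 0$, so the trivial representation represents the neutral element.

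The first step is to show $\ChainGp(\CQG) = \langle [r_1]\rangle$. By Lemma~\ref{Lem:TotR1}, made precise in Proposition~\ref{Prop:DefDeg}, every irreducible $p$ satisfies $p \leqslant (r_1)^{\delta(p)}$. Feeding the $\delta(p)$-fold tensor power $r_1 \otimes \cdots \otimes r_1$ into the defining relation of the chain group yields $[p] = \delta(p)\,[r_1]$; hence every generator is a multiple of $[r_1]$ and $\ChainGp(\CQG)$ is cyclic, generated by $[r_1]$.

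Next, I would observe that the assignment $p \mapsto [\delta(p)] \in \Z/s\Z$ respects the defining relations of the chain group: this is exactly equation~\eqref{Eqn:EgDeg}, which states $[\delta(\gamma)] = [\delta(\alpha)] + [\delta(\beta)]$ in $\Z/s\Z$ whenever $\gamma \leqslant \alpha \otimes \beta$. Thus it descends to a homomorphism $\bar\delta\colon \ChainGp(\CQG) \to \Z/s\Z$ with $\bar\delta([r_1]) = [\delta(r_1)] = [1]$, so $\bar\delta$ is surjective. For the reverse bound I would invoke the second bullet of Lemma~\ref{Lem:TotR1}: since $r_1$ and $r_{s-1}$ are contragredient, $1 \leqslant r_{s-1} \otimes r_1 \leqslant (r_1)^{s}$, whence $0 = [1] = s\,[r_1]$ in $\ChainGp(\CQG)$. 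A cyclic group generated by an element of order dividing $s$ which surjects onto $\Z/s\Z$ must have order exactly $s$, so $\bar\delta$ is the desired isomorphism $\ChainGp(\CQG) \cong \Z/s\Z$.

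The only nonformal ingredient is the well-definedness of $\bar\delta$, that is, equation~\eqref{Eqn:EgDeg} — the bookkeeping of the total degree through the fusion rules~\eqref{Eqn:FusionRules}, including the convention of writing $r_s$ rather than $r_0$ — which has already been established above. Everything else reduces to the degree estimates of Lemma~\ref{Lem:TotR1} and Proposition~\ref{Prop:DefDeg} together with the elementary structure of cyclic groups, so I do not expect any further obstacle.
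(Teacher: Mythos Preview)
Your proof is correct and uses the same two key ingredients as the paper's argument, namely equation~\eqref{Eqn:EgDeg} and the relation $1 \leqslant (r_1)^s$ from Lemma~\ref{Lem:TotR1}. The organization, however, differs slightly: the paper proceeds by establishing a \emph{bijection} between chain-equivalence classes of irreducibles and elements of $\Z/s\Z$ (showing separately that chain-equivalent representations have equal $[\delta]$ and conversely that equal $[\delta]$ implies chain-equivalence via padding by $(r_1)^s$), and then checks that the group law is respected. You instead observe up front that $\ChainGp(\CQG)$ is cyclic on $[r_1]$, build the surjection $\bar\delta$ onto $\Z/s\Z$, and bound the order of $[r_1]$ from above by~$s$; the isomorphism then drops out of elementary cyclic-group theory. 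Your route is marginally more streamlined, since it bypasses the explicit ``converse'' step and never needs to compare two arbitrary irreducibles directly; the paper's route, on the other hand, makes the identification of chain classes with residues more explicit, which feeds directly into the decomposition~\eqref{Eqn:DecompRell} used later.
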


\begin{proof}
The equation \eqref{Eqn:EgDeg} shows that if $\tau_1, \ldots , \tau_k, p, q$ are irreducible representations and that they satisfy $p, q \leqslant \tau_1 \cdot \tau_2 \cdots \tau_k$ (both $p$ and $q$ appear in the fusion product), then (in the group~$\Z/s\Z$)
\begin{gather*}
[\delta(p)] = [\delta(\tau_1)] + \cdots + [\delta(\tau_k)] = [\delta(q)].
\end{gather*}
In other words, if $p$ and $q$ have the same class in the chain group $\ChainGp(\CQG )$, then $[\delta(p)] = [\delta(q)]$.

Conversely, take $[\delta(p)] = [\delta(q)]$ in $\Z/s \Z$. If $\delta(p) = \delta(q)=\ell$, then both representations appear in $(r_1)^\ell$ and they have the same class in the chain group. Otherwise, without loss of generality, we can assume that $\delta(p) > \delta(q)$ and thus there is an integer $k$ such that $\delta(p) = \delta(q) + k s$. By def\/inition of $\delta(q)$, $q \leqslant (r_1)^{\delta(q)}$. We can then use Lemma~\ref{Lem:TotR1} (and especially the part $1 \leqslant (r_1)^s$) to show
\begin{gather*}
q = q \cdot 1^k \leqslant (r_1)^{\delta(q)} (r_1)^s \cdots (r_1)^s = (r_1)^{\delta(q) + k s}.
\end{gather*}
This in turn proves that $p$ and $q$ share the same class in~$\ChainGp(\CQG )$. The equation~\eqref{Eqn:EgDeg} then ensures that the group law in $\ChainGp(\CQG )$ and~$\Z/s\Z$ coincide.
\end{proof}

\begin{Remark}\label{Rk:Cond:Entrelac}
It follows from this evaluation of the chain group $\ChainGp(\CQG )$ together with the pro\-per\-ty $1 \leqslant (r_1)^s$ of Lemma~\ref{Lem:TotR1} that condition \ref{Cond:Entrelac} is satisf\/ied for $\CQG =H^{s+}_n$ equipped with its representation~$r_1$, for the integers $k_0 = s$ and $N = 1$.
\end{Remark}

\begin{cor}There is a decomposition
\begin{gather}\label{Eqn:DecompRell}
R_{\ell} = \bigoplus_{0 \leqslant k \leqslant \ell, [k] = [\ell]} \partial R_k,
\end{gather}
where the equality $[k] = [\ell]$ takes place in $\Z/s\Z$.
\end{cor}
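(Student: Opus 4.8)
The plan is to reduce \eqref{Eqn:DecompRell} to a purely combinatorial statement about which irreducible representations occur in $(r_1)^\ell$, and then to read that statement off the degree function and the chain group. Write $\mathrm{Irr}$ for the set of classes of irreducibles of $\CQG=H_n^{s+}$. By definition, $R_\ell$ is freely generated over $\Z$ by $\{\gamma\in\mathrm{Irr}\mid \gamma\leqslant(r_1)^\ell\}$, while $\partial R_k$ is freely generated by the set of $\gamma\in\mathrm{Irr}$ of degree exactly $k$, i.e.\ by $\{\gamma\mid\delta(\gamma)=k\}$. Since every irreducible has a single well-defined degree, the generating sets of the various $\partial R_k$ are pairwise disjoint, so the right-hand side of \eqref{Eqn:DecompRell} is automatically an internal direct sum inside the fusion ring. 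Hence it suffices to prove the set equality
\begin{gather*}
\{\gamma\in\mathrm{Irr}\mid\gamma\leqslant(r_1)^\ell\}=\bigsqcup_{0\leqslant k\leqslant\ell,\ [k]=[\ell]}\{\gamma\in\mathrm{Irr}\mid\delta(\gamma)=k\},
\end{gather*}
equivalently: an irreducible $\gamma$ appears in $(r_1)^\ell$ if and only if $\delta(\gamma)\leqslant\ell$ and $[\delta(\gamma)]=[\ell]$ in $\ChainGp(\CQG)=\Z/s\Z$.

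First I would prove the ``only if'' direction. If $\gamma\leqslant(r_1)^\ell$, then $\delta(\gamma)\leqslant\ell$ directly from the definition of $\delta$. Applying the degree identity \eqref{Eqn:EgDeg} repeatedly to the product $(r_1)^\ell=r_1\cdot r_1\cdots r_1$ (with $\ell$ factors), exactly as in the proof of Proposition~\ref{Prop:IddChainGp}, yields $[\delta(\gamma)]=\ell\,[\delta(r_1)]=[\ell]$, since $\delta(r_1)=1$ by Proposition~\ref{Prop:DefDeg}. For the ``if'' direction, suppose $\delta(\gamma)=k$ with $0\leqslant k\leqslant\ell$ and $[k]=[\ell]$ in $\Z/s\Z$; then $\ell-k$ is a nonnegative multiple of $s$, say $\ell=k+ms$ with $m\geqslant0$. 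Since $\gamma\leqslant(r_1)^k$ and $1\leqslant(r_1)^s$ by Lemma~\ref{Lem:TotR1}, and since tensoring by a fixed representation preserves the relation $\leqslant$, we get
\begin{gather*}
\gamma\cong\gamma\otimes 1^{\otimes m}\leqslant(r_1)^k\otimes\big((r_1)^s\big)^{\otimes m}=(r_1)^{k+ms}=(r_1)^\ell,
\end{gather*}
so $\gamma\leqslant(r_1)^\ell$. Combining the two directions gives the displayed set equality, and passing to the free $\Z$-modules on these bases gives \eqref{Eqn:DecompRell}.

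The hard part is essentially bookkeeping rather than a genuine obstacle: all the real content is already available in Lemma~\ref{Lem:TotR1} (the crucial input $1\leqslant(r_1)^s$) and in the degree/chain-group analysis culminating in \eqref{Eqn:EgDeg} and Proposition~\ref{Prop:IddChainGp}. The one point that must not be skipped is that the equality \eqref{Eqn:DecompRell} has two halves: $R_\ell$ must contain \emph{every} $\partial R_k$ with $k\leqslant\ell$ and $[k]=[\ell]$ (this is the ``if'' direction, where $1\leqslant(r_1)^s$ is used to push $\gamma$ from $(r_1)^k$ up to $(r_1)^\ell$), and $R_\ell$ must contain \emph{nothing outside} those $\partial R_k$ (this is the ``only if'' direction, where the $\Z/s\Z$-grading of the fusion ring is what rules out irreducibles of the wrong degree class). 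Checking only one of the two inclusions would leave the proof incomplete.
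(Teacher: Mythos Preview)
Your proof is correct and takes essentially the same approach as the paper: the paper's one-line argument cites precisely the two ingredients you use, namely $p \leqslant (r_1)^\ell \implies [\delta(p)] = [\ell]$ in $\Z/s\Z$ (your ``only if'' direction via~\eqref{Eqn:EgDeg}) and $1 \leqslant (r_1)^s$ (your ``if'' direction). You have simply unpacked the paper's terse proof into its two inclusions and made the bookkeeping explicit.
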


\begin{proof}This is a consequence of $p \leqslant (r_1)^\ell \implies [\delta(p)] = [\ell]$ in $\Z/s\Z$ together with $1 \leqslant (r_1)^{s}$.
\end{proof}

Finally, we will need the notion of \emph{length} $\lambda(r_s)$ of an irreducible representation $r_s$, which is just the length (number of letters) of its indexing sequence $s$. It is clear from~\eqref{Eqn:IneqDeg} that for all $\gamma \leqslant \alpha \cdot \beta$, $\lambda(\gamma) \leqslant \lambda(\alpha) + \lambda(\beta)$ with equality \emph{only} for the concatenation term of $z = \varnothing$.

\begin{thm}\label{Thm:Kth}
For $\CQG = H^{s+}_n$ and its representation $\alpha = r_1$, condition~{\rm \ref{Cond:Entrelac}} is satisfied for the integer $k_0 = s$ and the computation of $K$-theory yields
\begin{gather*}
K_0(\CAlg^\alpha) = \bigoplus_{\N} \Z, \qquad K_1(\CAlg^\alpha) = 0.
\end{gather*}
\end{thm}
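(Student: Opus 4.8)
The plan is to feed the explicit fusion rules~\eqref{Eqn:FusionRules} into the $K$-theoretic machinery of~\cite{FixedPtGabriel}. First, conditions~\ref{Cond:Contra} and~\ref{Cond:Entrelac} are already available: \ref{Cond:Contra} by Remark~\ref{Rk:Cond:Contra}, and \ref{Cond:Entrelac} for $k_0=s$, $N=1$ by Remark~\ref{Rk:Cond:Entrelac}. Hence Proposition~\ref{Prop:PrevResults} yields that $\CAlg^\alpha$ is a Kirchberg algebra in $\UCTclass$, so only the $K$-groups remain to be computed. I would invoke the computation underlying~\cite{FixedPtGabriel} (the same one used for Proposition~7.10 there): the AF core $\mathcal{F}:=\overline{\bigcup_k\bigl(\CAlg^\alpha\cap B((\C^n)^{\otimes k})\bigr)}$ satisfies $K_1(\mathcal{F})=0$, and $\CAlg^\alpha$ sits in a six-term exact sequence which, as $K_1(\mathcal{F})=0$, collapses to
\[
0 \longrightarrow K_1(\CAlg^\alpha) \longrightarrow K_0(\mathcal{F}) \xrightarrow{\ 1-\beta\ } K_0(\mathcal{F}) \longrightarrow K_0(\CAlg^\alpha) \longrightarrow 0 ,
\]
where $\beta$ is the endomorphism of $K_0(\mathcal{F})$ induced by the degree-$s$ shift of $\CAlg^\alpha$ implemented by the intertwiner $T_r$ of Remark~\ref{Rk:Cond:Entrelac}. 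Thus $K_1(\CAlg^\alpha)=\ker(1-\beta)$ and $K_0(\CAlg^\alpha)=\coker(1-\beta)$.

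Second, I would identify this data combinatorially. Since every irreducible representation of $H_n^{s+}$ occurs in some $(r_1)^\ell$ (Lemma~\ref{Lem:TotR1}), the group $K_0(\mathcal{F})=\varinjlim_k(R_k,\cdot[r_1])$ is identified with the free $\Z$-module $R$ on the set $B$ of all irreducibles $r_x$, under which $\beta$ is multiplication by $[r_1]$ in the fusion ring (equivalently, on the $s$-periodic picture supplied by~\ref{Cond:Entrelac} and the decomposition~\eqref{Eqn:DecompRell}, multiplication by $[(r_1)^s]$). The crucial input is the length function $\lambda$: from~\eqref{Eqn:FusionRules} together with Proposition~\ref{Prop:DefDeg} and Lemma~\ref{Lem:AllIrred}, in $r_x\otimes r_1$ the concatenation term $r_{x1}$ is the \emph{unique} irreducible constituent of maximal length $\lambda(x)+1$, every other constituent being strictly shorter. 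Writing $\iota\colon B\to B$ for the injection $r_x\mapsto r_{x1}$, this gives
\[
(1-\beta)\,[r_x]\;=\;-\,[\iota(r_x)]\;+\;(\text{terms of length}<\lambda(x)+1),
\]
i.e.\ with respect to the filtration of $R$ by $\lambda$ (each layer $\lambda^{-1}(m)$ being finite, as $s<\infty$), the map $1-\beta$ is in row-echelon form with $\pm1$ pivots and pivot set $\iota(B)$.

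From this the two computations follow. For $K_1$: if $v=\sum_x a_x[r_x]\neq0$ and $m$ is the largest length occurring in $v$, choose $r_{x_0}$ of length $m$ with $a_{x_0}\neq0$; the coefficient of $[\iota(r_{x_0})]$ (length $m+1$) in $(1-\beta)v$ is exactly $-a_{x_0}\neq0$, since no other $r_x$ with $a_x\neq0$ feeds into that length. Hence $\ker(1-\beta)=0$ and $K_1(\CAlg^\alpha)=0$. For $K_0$: $\operatorname{im}(1-\beta)$ is the $\Z$-span of the vectors $g_{b'}:=[b']+(\text{strictly shorter terms})$ for $b'\in\iota(B)$; replacing the basis vectors $[b']$ ($b'\in\iota(B)$) by the $g_{b'}$ is a $\lambda$-triangular, $\pm1$-pivoted automorphism of $R$ over $\Z$, so $\coker(1-\beta)$ is free abelian on $B\setminus\iota(B)$ — the trivial representation together with the irreducibles whose indexing word does not end in the letter $1$. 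For $s\geqslant2$ this set is countably infinite, whence $K_0(\CAlg^\alpha)\cong\bigoplus_{\N}\Z$.

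The step I expect to be delicate is the first: extracting from~\cite{FixedPtGabriel} the precise short exact sequence and, in particular, justifying that $K_0(\mathcal{F})$ is the free module $R$ and that $\beta$ is exactly multiplication by $[r_1]$ (respectively $[(r_1)^s]$) — the bookkeeping between $\varinjlim(R_k,\cdot[r_1])$ and $R$, and the point that the $s-1$ intermediate degrees are annihilated by~\ref{Cond:Entrelac} so that the relevant shift is genuinely by $s$. Reassuringly, the combinatorial argument above is insensitive to whether $\beta$ is $\cdot[r_1]$ or $\cdot[(r_1)^s]$; it also makes transparent the contrast with Theorem~\ref{Thm:A}, since the fusion rings of $SU(2)$ and $SO(3)$ carry no such length grading — there the new constituents of $v_k\otimes v_1$ lie on \emph{both} sides of $v_k$, so $1-\beta$ is not triangular and its cokernel collapses to $\Z$. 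A secondary technical point is to check that the ``strictly shorter terms'' are uniformly of length $\le\lambda(x)$ (this is precisely where the case analysis of~\eqref{Eqn:FusionRules} and Lemma~\ref{Lem:AllIrred} enter), so that the triangular change of basis is well founded.
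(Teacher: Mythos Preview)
Your proposal is correct and rests on the same key idea as the paper: the length function $\lambda$ renders the relevant ``$1$ minus shift'' map upper-triangular with unit pivots, so injectivity (hence $K_1=0$) and a free cokernel (hence $K_0=\bigoplus_{\N}\Z$) fall out immediately. The difference is purely in packaging. The paper works with the shift by $r_1^s$ and proceeds level by level: Lemma~\ref{Lem:RelRs} isolates the unique top-length term $r_{x1\cdots 1}$ in $\varphi(r_x)$, Lemma~\ref{Lem:DirectSum} builds an explicit complement $C_\ell$ to $\varphi(R_\ell)$ in $R_{\ell+s}$, Lemma~\ref{Lem:ConnectingMaps} checks that the induced connecting maps on these cokernels are the identity, and Lemma~\ref{Lem:EvalDeg} shows their ranks go to infinity. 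You instead work globally on the full fusion module with the shift by $r_1$ and read off the cokernel basis $B\setminus\iota(B)$ in one stroke. This is more economical, but the step you correctly flag as delicate---passing from the inductive limit $\varinjlim(R_k,\cdot[r_1])$ to the global module $R$ and identifying $\beta$---is exactly what the paper's Lemma~\ref{Lem:ConnectingMaps} handles: since $\psi=\varphi+\id$, the connecting maps on cokernels collapse to natural inclusions, so the finite-level computation assembles into (and justifies) your global one. The $K_1$ arguments are essentially identical.
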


The proof of this theorem is going to require a few intermediate lemmas and a restatement of the problem.

Indeed, Theorem~\ref{Thm:Kth} above is stated in terms of the representation $\alpha= r_1$, but the computations below will be easier if we consider the case of $\alpha = r_1^s$~-- which yields isomorphic results, according to Proposition~\ref{Prop:IddChainGp} above and \cite[Proposition~7.8, p.~1029]{FixedPtGabriel}.

Consider the maps $\varphi \colon R \to R$ and $\psi \colon R \to R$ given on all $a \in R$ by
\begin{gather*}
\varphi(a) = a (r_1^s - 1), \qquad \psi(a)= a r_1^s.
\end{gather*}
The previous properties of degree show that these maps induce $\varphi_\ell \colon R_\ell \to R_{\ell+s}$ and $\psi_\ell \colon R_\ell \to R_{\ell+s}$. The $K$-theory of the f\/ixed point algebra $\InvJ^\alpha$ is given by the inductive limit of the system
\begin{gather*}
\cdots \to R_{\ell} \xrightarrow{\psi_\ell} R_{\ell+s} \xrightarrow{\psi_{\ell+s}} R_{\ell + 2s} \to \cdots.
\end{gather*}
The general theory presented in~\cite{FixedPtGabriel} (see in particular Theorem~5.4, p.~1025) shows that $K_0(\CAlg^\alpha)$ is obtained as the cokernel of the map $\varphi \colon \lim\limits_{\to } R_\ell \to \lim\limits_{\to } R_\ell$ def\/ined from the system
\begin{gather}\label{Eqn:SysPhi}
\begin{split}&
\xymatrix@R=8mm{%
\cdots\ar[r] &R_{\ell} \ar[r]^{\psi_\ell} & R_{\ell+s} \ar[r]^{\psi_{\ell+s}} & R_{\ell+2s} \ar[r]^{\psi_{\ell+2s}} & R_{\ell+3s} \ar[r]^{\psi_{\ell+3s}} & R_{\ell+4s} \ar[r]& \cdots \\
\cdots\ar[r] &R_{\ell} \ar[r]_{\psi_\ell} \ar[ur]^-{\varphi_\ell} & R_{\ell+s} \ar[r]_{\psi_{\ell+s}} \ar[ur]^-{\varphi_{\ell+s}} & R_{\ell+2s} \ar[r]_{\psi_{\ell+2s}} \ar[ur]^-{\varphi_{\ell+2 s}} & R_{\ell+3s} \ar[r]_{\psi_{\ell+3 s}} \ar[ur]^-{\varphi_{\ell+3 s}} & R_{\ell+4s} \ar[r]& \cdots. \\
}
\end{split}
\end{gather}
All the squares in the diagram above are commutative -- indeed, it amounts to proving that for any $a \in R$, $a r_1^s (r_1^s - 1) = a (r_1^s - 1) r_1^s$. Thus, the map $\varphi \colon \lim\limits_{\to } R_\ell \to \lim\limits_{\to } R_\ell$ is well-def\/ined and we can compute its cokernel. To this end, we start by computing the cokernels at each f\/inite level~$\ell$ and it is a well-known property that we will obtain the overall cokernel as inductive limit of those f\/inite cokernels.

The evaluation below is the corner stone of our argument:
\begin{lem}\label{Lem:RelRs}
For any irreducible representation $r_x$ with $\delta(r_x) = \ell$, we have
\begin{gather}\label{Eqn:RelRs}
\varphi(r_x) = r_x (r_1^s- 1) = r_{x\text{\scriptsize $\underbrace{1\dots 1}_{s \, \text{terms}}$}} + r_{x s} + m,
\end{gather}
where $m$ is a $\Z$-linear combination of irreducible representations in $R_{\ell + s}$ which \emph{do not} contain any term $r_{x1\dots 1}$ and $r_{x s}$.
\end{lem}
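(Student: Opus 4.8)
The plan is to compute enough of the irreducible decomposition of $r_x\otimes r_1^s$ to isolate the two distinguished top-degree summands. Write $\ell:=\delta(r_x)$ and $x=(x_1,\dots,x_k)$, and assume $s\geq 2$ (the case $s=1$ is $S_n^+$, already handled in Section~\ref{Sec:Examples}, and there the two terms would coincide). Several things come for free: the maps $\varphi_\ell$ already take $R_\ell$ into $R_{\ell+s}$, so $\varphi(r_x)=r_x(r_1^s-1)\in R_{\ell+s}$; by Proposition~\ref{Prop:DefDeg} the representations $r_{x1\dots1}$ (with $s$ trailing $1$'s) and $r_{xs}$ both have degree $\ell+s$, hence lie in $R_{\ell+s}$, and they are distinct from one another (their lengths are $k+s$ and $k+1$) and from $r_x$ (degree $\ell$). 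Thus it suffices to prove that each of $r_{x1\dots1}$ and $r_{xs}$ occurs with multiplicity exactly one in $r_x\otimes r_1^s$: one then sets $m:=\varphi(r_x)-r_{x1\dots1}-r_{xs}$, which lies in $R_{\ell+s}$ and, since subtracting $r_x$ alters no degree-$(\ell+s)$ coefficient, contains neither of the two terms.

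The core of the argument is to track the maximal-degree components through iterated fusion by $r_1$. For a single fusion $r_a\otimes r_1$ with $a=(a_1,\dots,a_j)$, the fusion rule~\eqref{Eqn:FusionRules} combined with the equality analysis of Lemma~\ref{Lem:AllIrred} -- only the $z=\varnothing$ summand can reach the maximal degree $\delta(r_a)+1$, and its product term does so exactly when $a_j\leq s-1$ -- shows that the degree-$(\delta(r_a)+1)$ part of $r_a\otimes r_1$ is $r_{a_1\dots a_j1}$, plus $r_{a_1\dots a_{j-1}(a_j+1)}$ when $a_j\leq s-1$, each with multiplicity one. Iterating over the $s$ factors $r_1$ and using subadditivity of the degree (Lemma~\ref{Lem:AllIrred}), so that a component of non-maximal degree at an intermediate stage can never later produce a component of maximal degree, the degree-$(\ell+s)$ part of $r_x\otimes r_1^s$ is the sum, with multiplicities, over all \emph{maximal paths}: sequences of words $x=w^{(0)},w^{(1)},\dots,w^{(s)}$ in which $w^{(i+1)}$ is obtained from $w^{(i)}$ either by appending the letter $1$ or by adding $1$ to its last letter, the latter move allowed only when that last letter is $\leq s-1$. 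As each step carries multiplicity one, the multiplicity of an irreducible $r_w$ with $\delta(r_w)=\ell+s$ in $r_x\otimes r_1^s$ equals the number of maximal paths from $x$ to $w$.

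It then remains to count those paths for our two words. The word $x1\dots1$ has length $k+s$; since an append move raises the length by one and an add-$1$ move keeps it fixed, any maximal path reaching $x1\dots1$ must consist of $s$ append moves, so there is exactly one. The word $(x_1,\dots,x_k,s)$ has length $k+1$, so a maximal path reaching it uses exactly one append move; that move must be the first, since if instead it were in position $j>1$, the $j-1$ preceding add-$1$ moves would have raised $x_k$ to $x_k+j-1$, and this value $>x_k$ would remain in position $k$ of the terminal word, contradicting that the terminal word is $(x_1,\dots,x_k,s)$; and the path ``append, then $s-1$ add-$1$ moves'' (which runs the last letter through $1,2,\dots,s$, and is therefore legal) does reach $(x_1,\dots,x_k,s)$. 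Hence exactly one maximal path again, so both multiplicities equal one, which is what was needed.

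The step I expect to be the main obstacle is the reduction to counting maximal paths in the second paragraph: one has to justify carefully both the multiplicity-one statement for $r_a\otimes r_1$ -- here the convention $r_s=r_0$ is what makes things work, since the add-$1$ summand drops in degree precisely when it would wrap around modulo $s$ -- and the persistence of a degree deficit through later fusions. Once that structural description is secured, the two counting arguments are completely elementary.
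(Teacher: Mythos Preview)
Your proof is correct and follows essentially the same approach as the paper: both track the maximal-degree (i.e., $z=\varnothing$) summands through the $s$ successive fusions with $r_1$, and both use the length function to single out $r_{x1\dots1}$ as the unique term of maximal length $\lambda(r_x)+s$ and $r_{xs}$ as a term of minimal length among maximal-degree summands. Your ``maximal path'' formalism is just an explicit bookkeeping of these leading terms, with ``append'' the concatenation term and ``add-$1$'' the product term.

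The one place where the two write-ups differ in substance is the uniqueness argument for $r_{xs}$. The paper argues only that $r_{xs}$ has minimal length among maximal-degree summands and asserts this forces it to arise from product terms; your path-counting argument is more careful here, since you actually verify that the single concatenation step must occur first (otherwise the $k$-th letter of the terminal word would exceed $x_k$). This extra care is warranted and makes your argument cleaner at that point.
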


\begin{proof}It is clear from the fusion rules that by taking only the concatenation term for $z = \varnothing$ in the $s$ successive fusion products of $r_x$ with $r_1$, we obtain a term $r_\mu = r_{x\text{\scriptsize $\underbrace{1\dots 1}_{s \, \text{terms}}$}}$. Moreover, this irreducible irrepresentation has maximal length (namely $\lambda(r_\mu) = \lambda(r_x) + s$) among those appearing in the product $r_x (r_1^s- 1)$. Given $\gamma \leqslant \alpha \cdot \beta$, we know that $\lambda(\gamma) \leqslant \lambda(\alpha) + \lambda(\beta)$ is actually an \emph{equality} only for the concatenation term of $z = \varnothing$. It follows that there is only one way to obtain a representation of such length. Thus, no further term involving $r_{\mu}$ appear in~$\varphi(r_x)$.

For $r_{x s}$, the argument is slightly dif\/ferent: f\/irst, we remark that it has maximum degree ($\delta(r_{xs}) = \delta(r_x) + s$). This implies that it was obtained by taking only terms with $z = \varnothing$ in the successive fusion products. We then remark that its length is minimal among those terms obtained by taking only \emph{leading terms} ($z = \varnothing$) in the fusion. This in turn ensures that it is (and can only be) obtained by taking \emph{product terms} in the $s$ successive fusion products. Thus, no further term involving $r_{x s}$ appear in~$\varphi(r_x)$.
\end{proof}

\begin{lem}\label{Lem:DirectSum}
For all $\ell$, there is free $\Z$-module $C_{\ell} \subseteq R_\ell$ such that
\begin{gather}\label{Eqn:DirectSum}
R_{\ell+s} = C_{\ell +s} \oplus \varphi_{\ell}(R_\ell).
\end{gather}
Moreover, this free $\Z$-module decomposes according to the degree into
\begin{gather}\label{Eqn:DecompCell}
C_{\ell} = \bigoplus_{0 \leqslant k \leqslant \ell,\, [k] = [\ell]} \partial C_{k}.
\end{gather}
\end{lem}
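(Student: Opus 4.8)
The plan is to write down an explicit sub-basis of the canonical basis of each $R_m$ whose span will serve as $C_m$, and then, at the level $m=\ell+s$, to verify that this span is complementary to $\varphi_\ell(R_\ell)$ by a triangularity argument whose only non-formal ingredient is Lemma~\ref{Lem:RelRs}.

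First I would set up the bookkeeping. By \eqref{Eqn:DecompRell} the canonical $\Z$-basis of $R_m$ is the set of irreducibles $r_w$ with $\delta(r_w)\leqslant m$ and $[\delta(r_w)]=[m]$ in $\Z/s\Z$; split it into the subset $\mathcal A_m$ of those whose indexing word ends in $s$ consecutive letters $1$, and the complementary subset $\mathcal B_m$. Appending $s$ copies of the letter $1$ to a word raises its length and its degree each by $s$, and deleting the final $s$ letters of the word underlying an element of $\mathcal A_m$ yields the word underlying a basis element of $R_{m-s}$; hence $r_u\mapsto r_{u1\dots1}$ is a bijection from the canonical basis of $R_{m-s}$ onto $\mathcal A_m$. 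Accordingly I set $\partial C_k:=\Z\text{-span}\{\,r_w\mid \delta(r_w)=k,\ \text{the word of }r_w\text{ does not end in }s\text{ letters }1\,\}\subseteq\partial R_k$ and $C_m:=\bigoplus_{0\leqslant k\leqslant m,\,[k]=[m]}\partial C_k$. Then $C_m\subseteq R_m$ and the degree decomposition \eqref{Eqn:DecompCell} hold by construction, and $C_{\ell+s}=\operatorname{span}(\mathcal B_{\ell+s})$; so the lemma reduces to the claim $R_{\ell+s}=\operatorname{span}(\mathcal B_{\ell+s})\oplus\varphi_\ell(R_\ell)$.

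The heart of the matter is triangularity of $\varphi_\ell$ relative to $R_{\ell+s}=\operatorname{span}(\mathcal A_{\ell+s})\oplus\operatorname{span}(\mathcal B_{\ell+s})$. For a basis element $r_u$ of $R_\ell$, Lemma~\ref{Lem:RelRs} gives $\varphi_\ell(r_u)=r_u(r_1^s-1)=r_{u1\dots1}+r_{us}+m$ with $m$ involving neither $r_{u1\dots1}$ nor $r_{us}$, and with $r_{us}\in\operatorname{span}(\mathcal B_{\ell+s})$. Since $r_{u1\dots1}$ is the unique summand of maximal length $\lambda(r_u)+s$ in $\varphi_\ell(r_u)$ --- here one uses that $\lambda(\gamma)\leqslant\lambda(\alpha)+\lambda(\beta)$ with equality only for the concatenation term of the fusion rule --- every other summand has length $<\lambda(r_u)+s$; hence any such summand that lies in $\mathcal A_{\ell+s}$ equals $r_{u'1\dots1}$ for some $u'$ with $\lambda(r_{u'})<\lambda(r_u)$, and this includes the contribution $-r_u$ from $\varphi_\ell(r_u)=r_ur_1^s-r_u$ in the case where $r_u$ itself lies in $\mathcal A_\ell$ (then $-r_u=-r_{u'1\dots1}$ with $\lambda(r_{u'})=\lambda(r_u)-s$). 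Ordering the basis of $R_\ell$ by word length and matching it, via the bijection $r_u\leftrightarrow r_{u1\dots1}$, to a basis of $\operatorname{span}(\mathcal A_{\ell+s})$, the composite $\operatorname{pr}\circ\varphi_\ell\colon R_\ell\to\operatorname{span}(\mathcal A_{\ell+s})$ --- with $\operatorname{pr}$ the projection killing $\operatorname{span}(\mathcal B_{\ell+s})$ --- is unitriangular, hence a $\Z$-module isomorphism.

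Granting this, the rest is a formal diagram chase: $\varphi_\ell$ is injective because $\operatorname{pr}\circ\varphi_\ell$ is; an element of $\varphi_\ell(R_\ell)\cap\operatorname{span}(\mathcal B_{\ell+s})$ is annihilated by $\operatorname{pr}$, hence zero; and for arbitrary $w\in R_{\ell+s}$ one has $w-\varphi_\ell\big((\operatorname{pr}\circ\varphi_\ell)^{-1}(\operatorname{pr}(w))\big)\in\ker\operatorname{pr}=\operatorname{span}(\mathcal B_{\ell+s})$. Therefore $R_{\ell+s}=\varphi_\ell(R_\ell)\oplus\operatorname{span}(\mathcal B_{\ell+s})=\varphi_\ell(R_\ell)\oplus C_{\ell+s}$, which is the assertion. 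I expect the triangularity claim to be the only real obstacle: one must be certain that, among all irreducibles produced by the $s$-fold fusion $r_ur_1^s$ (less the summand $r_u$), the only one whose word ends in $s$ ones and whose length is not strictly below $\lambda(r_u)+s$ is $r_{u1\dots1}$ itself --- precisely what one extracts from Lemma~\ref{Lem:RelRs} together with the equality case of the length inequality --- after which everything is routine degree-and-length bookkeeping.
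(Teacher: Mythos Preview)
Your proof is correct and uses the same choice of $C_\ell$ (spanned by the irreducibles whose word does not end in $s$ ones) and the same key input, Lemma~\ref{Lem:RelRs}, as the paper. The only difference is packaging: the paper argues by induction on $\ell$, peeling off $\partial C_{\ell+s}\oplus\varphi(\partial R_\ell)$ at each step, whereas you give a single global argument by showing that $\operatorname{pr}\circ\varphi_\ell$ is unitriangular in the length-ordered bases; your version is slightly cleaner and makes the injectivity of $\varphi_\ell$ and the trivial intersection explicit.
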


{\samepage\begin{Remark}\label{Rk:Coker}
The statement above calls for several remarks:
\begin{itemize}\itemsep=0pt
\item The notation $R_{\ell+s} = C_{\ell +s} \oplus \varphi_{\ell}(R_\ell)$ indicates that any element of $R_{\ell+s}$ can be written \emph{in a~unique way} as a sum of an element of $C_{\ell+s}$ and an element of $\varphi_{\ell}(R_\ell)$.
\item An immediate consequence of \eqref{Eqn:DirectSum} is that we can thus identify $C_{\ell+s}$ with the cokernel $R_{\ell+s}/\varphi(R_{\ell})$.
\item In the decomposition \eqref{Eqn:DecompCell}, we use obvious notations similar to those of \eqref{Eqn:DecompRell}.
\end{itemize}
\end{Remark}}

\begin{proof}[Proof of Lemma~\ref{Lem:DirectSum}] We proceed by induction: for a minimal level $0 \leqslant \ell < 2 s$, the decomposition~\eqref{Eqn:DirectSum} shows that we just have to f\/ind $C_\ell$ s.t.\ $R_\ell = C_\ell \oplus \varphi_{\ell -s}(R_{\ell -s})$. Given any $a \in R_\ell$, we can use relation~\eqref{Eqn:RelRs} to cancel any term of the form $r_{x\text{\scriptsize $\underbrace{1\dots 1}_{s \, \text{terms}}$}}$ appearing in $a$. If we then def\/i\-ne~$C_{\ell}$ as the free $\Z$-module generated on all irreducible representations appearing in~$R_\ell$ which are \emph{not} of the form $r_{x\text{\scriptsize $\underbrace{1\dots 1}_{s \, \text{terms}}$}}$, then clearly
\begin{gather*}
R_\ell = C_\ell \oplus \varphi_{\ell -s}(R_{\ell -s}).
\end{gather*}
Let us now assume that at level $\ell$, we have a decomposition
\begin{gather*}
R_\ell = C_\ell \oplus \varphi(R_\ell),
\end{gather*}
where $C_{\ell} = \bigoplus_{0 \leqslant k \leqslant \ell, [k] = [\ell]} \partial C_{k}$, we want to prove that $R_{\ell+s}$ admits a similar decomposition.

A consequence of the decomposition \eqref{Eqn:DecompRell}, is that $\varphi(R_{\ell}) = \bigoplus_{0 \leqslant k \leqslant \ell, [k] = [\ell]} \varphi(\partial R_k)$.

Let us now introduce $\partial C_{\ell+2}$ as the free $\Z$-module generated by all irreducible representation of degree \emph{exactly} $\ell+s$ which are \emph{not} of the form $r_{x\text{\scriptsize $\underbrace{1\dots 1}_{s \, \text{terms}}$}}$, then
\begin{gather*}
R_{\ell+s} = \partial C_{\ell+s} \oplus \varphi( \partial R_{\ell}) \oplus R_\ell = \partial C_{\ell+s} \oplus \bigoplus_{0 \leqslant k \leqslant \ell, [k] = [\ell]} \partial C_{k} \oplus \varphi(R_{\ell+s}).
\end{gather*}
This completes the proof of the existence of the $\Z$-free module $C_{\ell} = \bigoplus_{0 \leqslant k \leqslant \ell, [k] = [\ell]} \partial C_{k}$ which implements the cokernel in~$R_\ell$.
\end{proof}

Let us now study the connecting maps between these cokernels. Remember from the commutation relations appearing in \eqref{Eqn:SysPhi} that all connecting maps $\psi_{\ell} \colon R_\ell \to R_{\ell +s}$ induce quotient maps at the level of cokernels, which we denote by
\begin{gather*}
\widetilde{\psi_{\ell}} \colon \ R_\ell / \varphi(R_{\ell-s}) \to R_{\ell +s}/ \varphi(R_\ell).
\end{gather*}
\begin{lem}\label{Lem:ConnectingMaps}
The connecting maps between the cokernels are the identity: for any $a \in R_\ell$, $\widetilde{\psi_{\ell}}([a]_\ell) = [a]_{\ell+s}$,
where $[a]_\ell$ and $[a]_{\ell +s}$ are the class of $a$ in $R_\ell / \varphi(R_{\ell - s})$ and $R_{\ell+s} / \varphi(R_\ell)$, respectively.
\end{lem}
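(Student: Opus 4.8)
The plan is to reduce everything to the single fusion-ring identity $a\,r_1^s = a + a(r_1^s-1)$, that is, $\psi_\ell = \iota + \varphi_\ell$, where $\iota\colon R_\ell \hookrightarrow R_{\ell+s}$ is the natural inclusion. First I would spell out why this inclusion exists: by the part $1 \leqslant (r_1)^s$ of Lemma~\ref{Lem:TotR1}, every irreducible summand of $(r_1)^\ell$ is also a summand of $(r_1)^\ell(r_1)^s = (r_1)^{\ell+s}$, so the free generators of $R_\ell$ form a subset of those of $R_{\ell+s}$; in terms of \eqref{Eqn:DecompRell} this is precisely the inclusion of $\bigoplus_{0\leqslant k\leqslant \ell,\,[k]=[\ell]}\partial R_k$ as a sub-direct-sum of $R_{\ell+s}$, using $[\ell]=[\ell+s]$ in $\Z/s\Z$. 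Under this identification, for $a\in R_\ell$ the element $\psi_\ell(a)=a\,r_1^s\in R_{\ell+s}$ and the element $a\in R_{\ell+s}$ differ exactly by $a(r_1^s-1)=\varphi_\ell(a)$, which lies in $\varphi(R_\ell)$.

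Then the conclusion is immediate. The map $\widetilde{\psi_\ell}$ is by definition the one induced by $\psi_\ell$ on the cokernels; it is well defined because the commutativity of the squares in \eqref{Eqn:SysPhi}, i.e.\ $\psi_\ell\circ\varphi_{\ell-s}=\varphi_\ell\circ\psi_{\ell-s}$, forces $\psi_\ell\big(\varphi(R_{\ell-s})\big)=\varphi_\ell\big(\psi_{\ell-s}(R_{\ell-s})\big)\subseteq\varphi(R_\ell)$. Hence, for $a\in R_\ell$,
\[
\widetilde{\psi_\ell}\big([a]_\ell\big) = [\psi_\ell(a)]_{\ell+s} = [a + \varphi_\ell(a)]_{\ell+s} = [a]_{\ell+s},
\]
the last equality because $\varphi_\ell(a)\in\varphi(R_\ell)$. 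Under the identifications of Lemma~\ref{Lem:DirectSum} (Remark~\ref{Rk:Coker}) of $R_\ell/\varphi(R_{\ell-s})$ with $C_\ell$ and of $R_{\ell+s}/\varphi(R_\ell)$ with $C_{\ell+s}$, together with \eqref{Eqn:DecompCell} presenting $C_\ell=\bigoplus_{0\leqslant k\leqslant \ell,\,[k]=[\ell]}\partial C_k$ as a sub-direct-sum of $C_{\ell+s}$, this says exactly that $\widetilde{\psi_\ell}$ is the canonical inclusion $C_\ell \hookrightarrow C_{\ell+s}$, which is the meaning of ``the connecting maps are the identity''.

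The only genuine subtlety — hence the (mild) main obstacle — is bookkeeping: one must use the \emph{same} inclusion $R_\ell\hookrightarrow R_{\ell+s}$ when writing $\psi_\ell=\iota+\varphi_\ell$, when reading off $[a]_{\ell+s}$, and when matching the gradings \eqref{Eqn:DecompRell} and \eqref{Eqn:DecompCell}, so that the statement $\widetilde{\psi_\ell}([a]_\ell)=[a]_{\ell+s}$ is an honest equality and not merely a notational identification. Everything else is the one-line observation $r_1^s = 1 + (r_1^s-1)$ in the fusion ring.
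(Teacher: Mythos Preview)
Your proof is correct and follows exactly the same idea as the paper: the single identity $\psi_\ell(a)=a\,r_1^s=a+a(r_1^s-1)=a+\varphi_\ell(a)$, whence $[\psi_\ell(a)]_{\ell+s}=[a]_{\ell+s}$. The paper's proof is this one line; your version simply spells out the surrounding bookkeeping (existence of the inclusion $R_\ell\hookrightarrow R_{\ell+s}$, well-definedness of $\widetilde{\psi_\ell}$, and the interpretation via $C_\ell\hookrightarrow C_{\ell+s}$) that the paper leaves implicit.
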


\begin{proof}Indeed, take any $a \in R_\ell$, then $\psi(a) = \varphi(a) + a$. We know that in the cokernel $R_{\ell+s} / \varphi(R_\ell)$, $[\varphi(a)]_{\ell+s} = 0$, thus $[\psi(a)]_{\ell+s} = [a]_{\ell+s}$.
\end{proof}

\begin{Remark}A consequence of the above Lemma~\ref{Lem:ConnectingMaps} is that the inductive limit $\lim\limits_{\to } C_\ell$ is simply the increasing union of free $\Z$-modules and it suf\/f\/ices to estimate the number of irreducible representations of~$H_n^{s+}$ of degree $\ell +s$ which are not of the form $r_{x\text{\scriptsize $\underbrace{1\dots 1}_{s \, \text{terms}}$}}$. We do precisely this in the next lemma.
\end{Remark}

\begin{lem}\label{Lem:EvalDeg}Let $m_{\ell+s}$ be the number of irreducible representations of $H_n^{s+}$ of degree $\ell +s$ which are not of the form $r_{x\text{\scriptsize $\underbrace{1\dots 1}_{s \, \text{terms}}$}}$, then $m_{\ell+s} \to \infty $.
\end{lem}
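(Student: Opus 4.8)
The plan is to convert everything into a counting problem for compositions of integers. By Proposition~\ref{Prop:DefDeg}, the irreducible representations of $H_n^{s+}$ are indexed bijectively by finite words $\sigma_1\cdots\sigma_k$ over $\{1,\dots,s\}$ (distinct words give distinct representations, since the irreducibles are indexed by the monoid $F$), and $r_{\sigma_1\cdots\sigma_k}$ has degree $\sigma_1+\cdots+\sigma_k$. Writing $a_d$ for the number of irreducible representations of degree $d$, it follows that $a_d$ is the number of compositions of $d$ into parts of size at most $s$; in particular $a_0=1$ and $a_d=\sum_{j=1}^{\min(s,d)}a_{d-j}$ for $d\ge 1$, so that $a_{\ell+s}=a_{\ell+s-1}+\cdots+a_{\ell}$ for every $\ell\ge 0$.

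Next I would identify, among the representations of degree $\ell+s$, exactly those of the excluded form $r_{x\,\underbrace{1\cdots1}_{s}}$. Such a representation corresponds to a word ending in at least $s$ consecutive $1$'s; conversely, given any such word, stripping off its last $s$ letters produces a well-defined word of degree $\ell$ (possibly empty, and possibly itself ending in $1$'s), and this assignment is a bijection onto the set of all words of degree $\ell$. Hence there are exactly $a_\ell$ representations of the excluded form, and therefore
\begin{gather*}
m_{\ell+s}=a_{\ell+s}-a_\ell=\sum_{i=1}^{s-1}a_{\ell+i},
\end{gather*}
using the recurrence above.

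Finally I would bound the right-hand side below. For $s\ge 2$ the sum is nonempty and contains the term $a_{\ell+1}\ge 1$; moreover the sequence $(a_d)$ is unbounded, since $a_d\ge a_{d-1}+a_{d-2}$ for $d\ge 2$ (the cases $j=1,2$ in the recurrence) forces $a_d$ to dominate the Fibonacci numbers. Thus $m_{\ell+s}\ge a_{\ell+1}\to\infty$ as $\ell\to\infty$, which is the claim. (For $s=1$ the displayed sum is empty and the statement genuinely fails; this matches the fact that $H_n^{1+}=S_n^+$ with its fundamental representation is the case already treated in Theorem~\ref{Thm:A}, so one works with $s\ge 2$ in this section.)

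The only delicate point is the bijection in the middle step: one must verify that ``$r_w$ has the form $r_{x\,\underbrace{1\cdots1}_{s}}$'' is precisely the condition that the word $w$ terminates in $s$ or more $1$'s, that removing exactly $s$ trailing $1$'s is well defined and injective on this set, and that it surjects onto all words of degree $\ell$. Once this is pinned down, the rest is routine composition counting together with the elementary growth estimate for $(a_d)$.
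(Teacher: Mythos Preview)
Your proof is correct and somewhat sharper than the paper's. Both arguments identify the irreducible representations of degree $d$ with compositions of $d$ into parts from $\{1,\dots,s\}$, but the paper proceeds via inequalities: it invokes Lemma~\ref{Lem:RelRs} to note that $r_x\mapsto r_{x\,1\cdots 1}$ and $r_x\mapsto r_{xs}$ are two injections from degree $\ell$ into degree $\ell+s$, hence $n_{\ell+s}\geqslant 2n_\ell$ and (since $r_{xs}$ avoids the excluded form) $m_{\ell+s}\geqslant n_\ell$, and concludes by iterating. You instead set up the composition recurrence directly and obtain the exact identity $m_{\ell+s}=a_{\ell+s}-a_\ell=\sum_{i=1}^{s-1}a_{\ell+i}$, which bypasses Lemma~\ref{Lem:RelRs} entirely and makes the growth estimate immediate. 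Your explicit handling of the degenerate case $s=1$ is also a welcome clarification: the paper's argument tacitly needs $s\geqslant 2$ for $r_{xs}$ not to be of the excluded form, and indeed $H_n^{1+}=S_n^+$ is dealt with separately in Theorem~\ref{Thm:A}.
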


\begin{proof}
Let us introduce the number $n_\ell$ of irreducible representations of degree \emph{exactly} $\ell$, then relation~\eqref{Eqn:RelRs} ensures that
\begin{gather}\label{Eqn:nEll}
n_{\ell+s} \geqslant 2 n_{\ell}.
\end{gather}
Indeed, for each irreducible representation $r_x$ of degree $\ell$, there are at least two irreducible representations of degree $\ell +2$, namely $r_{x\text{\scriptsize $\underbrace{1\dots 1}_{s \, \text{terms}}$}}$ and $r_{xs}$. This also forces $m_{\ell+s} \geqslant n_{\ell}$. The rela\-tion~\eqref{Eqn:nEll}, together with the equality $n_0 = 1$, shows that~$n_\ell$ (and therefore~$m_{\ell}$) tends to inf\/inity when $\ell$ tends to inf\/inity.
\end{proof}

\begin{proof}[Proof of Theorem~\ref{Thm:Kth}] It follows from Example~\ref{Ex:CP}(a) that conditions~\ref{Cond:Contra} and~\ref{Cond:Entrelac} are satisf\/ied for the fusion rules of the quantum ref\/lection group $\CQG = H_n^{s+}$. Consequently, Proposition~\ref{Prop:PrevResults} applies to the f\/ixed point algebra~-- which is therefore determined up to $*$-isomorphism by its $K$-theory.

To compute $K_*(\CAlg^\alpha)$, we use the inductive system~\eqref{Eqn:SysPhi}. We f\/irst evaluate $K_1(\CAlg^\alpha)$: according to \cite[Theorem~5.4, p.~1025]{FixedPtGabriel} this $K$-group is the kernel of the map $\varphi$ def\/ined by the inductive system~\eqref{Eqn:SysPhi}. If $c$ is a nonzero element in~$R$, it can be realised on a f\/inite level $\ell$. Let us consider the top length nonvanishing irreducible representations appearing in $c \in R_\ell$ and write $c = \sum\alpha_j r_{x^j_1 \dots x^j_\lambda} + m$ where $\alpha_j \in \Z \setminus \{ 0\}$, $\lambda$ is the maximum length of irreducible representations in~$c$ and~$m$ is a combination of irreducible representations with lower length. Following Lemma~\ref{Lem:RelRs},
\begin{gather*}
\varphi(c) = \sum_{} \alpha_j r_{x^j_1 \dots x^j_\lambda 1 \dots 1} + m',
\end{gather*}
where $m'$ is a linear combination of irreducible representations which \emph{do not} contain any term $ r_{x^j_1 \dots x^j_\lambda 1 \dots 1}$ (i.e., maximal length terms). It follows that $\varphi(c) - c \neq 0$ (since irreducible representations in $c$ have length at most $\lambda$ and $r_{x^j_1 \dots x^j_\lambda 1 \dots 1}$ has length $\lambda + s$, no cancellation can occur). Essentially the same argument proves that if $c \in R_\ell$ is nonvanishing, then $\psi_\ell(c) \in R_{\ell +s}$ is also nonvanishing. Consequently, $\ker(1 - \varphi_*) = K_1(\CAlg^\alpha) = \{ 0 \}$.

The computation of $K_0(\CAlg^\alpha)$ is an easy consequence of Lemma~\ref{Lem:DirectSum}, Remark~\ref{Rk:Coker} and Lem\-mas~\ref{Lem:ConnectingMaps} and~\ref{Lem:EvalDeg}. The proof of Theorem~\ref{Thm:Kth} is thus complete.
\end{proof}

As a f\/inal comment, this paper shows how techniques from classif\/ication theory for $C^*$-algebras and a thorough understanding of fusion rules can be combined to identify free actions of compact quantum groups on $C^*$-algebras. The characterisation of the f\/ixed point algebra requires a concrete computation of $K$-theory, and explains why we restricted ourselves to examples in the second part of the paper. Similar results should however be possible for other classes of CQGs, as soon as we have a f\/ine comprehension of their fusion rules.

\subsection*{Acknowledgements}

The second author was partially funded by the ERC Advanced Grant on Non-Commutative Distributions in Free Probability, held by Roland Speicher, Saarland University. The f\/irst author was supported by the Danish National Research Foundation through the Centre for Symmetry and Deformation (DNRF92) and by the Engineering and Physical Sciences Research Council Grant EP/L013916/1, since the f\/irst results of this work were obtained during the f\/irst author's postdoc in Glasgow.

Both authors are grateful to Roland Speicher's ERC Advanced Grant and Christian Voigt for enabling their respective stays in Scotland where this collaboration started. They also thank the anonymous referees for their thourough reviews and remarks.

\pdfbookmark[1]{References}{ref}
\LastPageEnding

\end{document}